\documentclass[12pt,longbibliography]{article}

\usepackage[utf8]{inputenc}

\usepackage[T1]{fontenc}

\usepackage[a4paper, margin=2.7cm]{geometry}

\usepackage{amsmath, amsthm, amsfonts, amssymb}
\usepackage{mathrsfs}           % \mathscr font.

\usepackage[skip=10pt plus1pt, indent=10pt]{parskip}
\usepackage[colorlinks=true,linkcolor=blue,citecolor=blue,urlcolor=blue,breaklinks]{hyperref}

\usepackage{cleveref}

\usepackage{bbm}

\usepackage[square,numbers]{natbib}
\usepackage{url}
\newcommand{\abs}[1]{\left\vert#1\right\vert}
\def\R{\mathbb{R}}
\def\S{\mathbb{S}}

\def\d{\,\mathrm{d}}

\def\:{\colon}

\def\R{\mathbb{R}}

\def\1{\mathbbm{1}}

\DeclareMathOperator{\dist}{dist}

\def\d{\,\mathrm{d}}

\def\H{\mathbb{H}}

\def\:{\colon}

\def\lambdaL{\lambda_{\mathrm{L}}}

\newtheorem{thm}{Theorem}[section]
\newtheorem{cor}[thm]{Corollary}
\newtheorem{lem}[thm]{Lemma}
\newtheorem{prp}[thm]{Proposition}

\theoremstyle{definition}

\theoremstyle{remark}
\newtheorem{rem}[thm]{Remark}
\theoremstyle{example}

\def\thetitle{Sharp asymptotic behaviour of symmetric and non-symmetric solutions of the Heat Equation in the Hyperbolic Space}
\def\theauthor{J. A. Cañizo, A. Gárriz \& D.A. Marín}

\hypersetup{pdftitle={\thetitle}}
\hypersetup{pdfauthor={\theauthor}}

\title{\thetitle}
\author{\theauthor}

\date{\today}

\AtEndDocument{\vspace{2cm}{\footnotesize
		\noindent
		\textsc{José A. Cañizo. Departamento de Matemática Aplicada \& IMAG,
			Universidad de Granada. Avda. Fuentenueva S/N, 18071 Granada, Spain.}
		\\
		\textit{E-mail address}: \texttt{\href{mailto:canizo@ugr.es}{canizo@ugr.es}}
		\par
		\addvspace{\medskipamount}
		\noindent
		\textsc{Alejandro Gárriz. Departamento de Matemática Aplicada \& IMAG,
			Universidad de Granada. Avda. Fuentenueva S/N, 18071 Granada, Spain.}
		\\
		\textit{E-mail address}:
		\texttt{\href{mailto:alejandro.garriz@ugr.es}{alejandro.garriz@ugr.es}}
		\par
		\addvspace{\medskipamount}
		\noindent
		\textsc{Diego Alfonso Marín. Departamento de Geometría y Topología \& IMAG,
			Universidad de Granada. Avda. Fuentenueva S/N, 18071 Granada, Spain.}
		\\
		\textit{E-mail address}: \texttt{\href{mailto:damarin@ugr.es}{damarin@ugr.es}}
}}

\begin{document}

\maketitle
	
 \begin{abstract}
   In this work we study the large-time behaviour of solutions of the Heat Equation in the hyperbolic space $\H^d$, providing precise speeds of convergence in $L^1$ and $L^\infty$ to their asymptotic profiles by means of an adaptation of entropy estimates. For $L^1$ initial conditions we are able to identify the asymptotic profile in $L^1$, which is not universal but contains a certain memory of the initial distribution of the mass of the solution. We improve thus on previous results, where speed of convergence was absent and asymptotic profiles where not known in the general case, and show a way to adapt entropy estimates employed in the study of diffusion processes to non-compact Riemannian manifolds. The main strategy to prove this is to consider  \textit{transient profiles} as minimizers of the entropy functional. These profiles are time-dependent and encompass the geometric information of the Riemannian manifold.

   %We advance that, in the case of integrable initial data, the solutions preserve along the evolution a certain \textit{memory} of the initial distribution of its mass, represented by a function on the sphere $\Phi(\theta)$, $\theta\in\S^{d-1}$.
 \end{abstract}

\textbf{Keywords: } Heat Equation, Hyperbolic Space, Asymptotic behaviour, Entropy Estimates. 

\renewcommand{\thefootnote}{\fnsymbol{footnote}} 
\footnotetext{\emph{Mathematics Subject Classification 2020: } 58J35 (Primary), 35B40, 35K05, 35R01}     
\renewcommand{\thefootnote}{\arabic{footnote}}

\tableofcontents

\section{Introduction}
\label{sec:intro}

The goal of this article is to study the behaviour for large times of positive solutions of the heat equation
\begin{equation}\label{eq:heatEquationHyperbolic}
\begin{cases}
	u_t = \Delta_{\H^d} u, \quad &x\in \mathbb{H}^d, t>0,\\
	u(0,\cdot )=u_0, & x\in \mathbb{H}^d
\end{cases}
\end{equation}
in the hyperbolic space $\mathbb{H}^d$, where $d\geq2$ represents the dimension of the space, for several different families of initial data. As we will see in this introduction, solutions to this equation present many distinctive and unexpected features that set them apart from their Euclidean counterparts and that introduce the need of several modifications on the classical techniques in order to study their large-time behaviour.

We will focus on two types of solutions of~\eqref{eq:heatEquationHyperbolic}. Those whose initial datum presents a special symmetry and, as a consequence, the solutions will depend only on one variable (as, for example, when $u_0$ is radially symmetric); and those whose initial datum only satisfies $u_0\in L^1(\H^d)$. Note that the definition of $L^1(\H^d)$ has to take into account the geometry of the space through the differential of the measure, also known as the Jacobian or the volume form, which we will denote by $d\mu$. If $u$ presents some special symmetry and depends only on one variable we will work with the classic concept of solutions of one-dimensional PDEs of second order. If, on the other hand, we work in the case where $u_0$ is integrable, then our concept of solutions will be the one constructed by the semi-group approach; a unique solution $u(t,x)$ of the heat equation with such initial data exists in that class and, moreover, $u\in C([0,\infty) : L^1(\H^d))$ and $u\in C^\infty((0,\infty)\times H^d)$, see to that respect~\cite{Grigor'yan,Strichartz}.

Specifically, the families of initial data $u_0(x)$ that we are going to work with are:
\begin{itemize}
	\item[-] Those which depend only on the distance between $x$ and a certain pole $\mathcal{O}$ and $u_0\in L^1(\R^+; \d\mu)$. We will refer to this case as \textit{radially symmetric} or \textit{radial} initial data and solutions.
	
%	\item[-] Those which depend only to the (signed) distance between $x$ and a certain hyperplane $\mathcal{P}$, which is isometric to $\H^{d-1}$, and $u_0\in L^1(\R; \d\mu)$. We will refer to this as \textit{planar}, integrable initial data and solutions.
	
	\item[-] Those which depend only on the (signed) distance between $x$ and a certain horosphere $\mathcal{H}$, and $u_0\in L^1(\R; \d\mu)$. We will refer to this as \textit{horospheric}, integrable initial data and solutions. A horosphere is an interesting geometrical object present in $\H^d$ but inexistent in $\R^d$ that will be defined in Section~\ref{sect:horospheric_coordinates}.
	
	\item[-] Those that satisfy, simply, $u_0\in L^1(\H^d; \d\mu)$. Note that integrable radial solutions are inside this family. We treat the radial solutions separately because they serve as a rich toy model and first step before treating the more general (and complex) case of only integrable solutions. Horospheric solutions are, on the other hand, clearly not in this class of integrable solutions (unless they are trivial).
\end{itemize}

To our knowledge, the first study of the behaviour for large times of the solutions of~\eqref{eq:heatEquationHyperbolic} is due to J.L. Vázquez~\cite{vazquez1992asymptotic}. In his work the author proves convergence of radial solutions to the fundamental solution of the equation $P_d(t,x)$. In particular, the author proves convergence in $L^1$ without speed, i.e., if $\mathcal{M}$ represents the mass of $u_0$ (for whatever this concept of \textit{mass} means for now), then
\[
\|u(t,x) -\mathcal{M} P_d(t,x)\|_{L^1(\H^d)}\to 0\quad\text{as}\quad t\to \infty,
\]
and, as a consequence of the $L^1-L^\infty$ smoothing effect,
\[
\|u(t,x) -\mathcal{M} P_d(t,x)\|_{L^\infty(\H^d)}= O(t^{-\frac{3}{2}} e^{-\lambda_1 t}),
\]
where $\lambda_1$ represents the first eigenvalue of the Laplace-Beltrami operator in $\H^d$, so
\begin{equation}\label{eq:definition_bottom_spectrum_laplacian}
	\lambda_1:=\frac{(d-1)^2}{4}.
\end{equation}
Other convergence results, this time for \textit{horospheric solutions} (which will be defined later), are also included in~\cite{vazquez1992asymptotic}.

But the author does not stop at these positive results. It is also proven that, if the initial datum is not radially symmetric, then the solution does not converge to the fundamental solution. If $P^y_d(t,x)$ represents the fundamental solution centred around a point $y\in\H^d$, then
\[
\lim\limits_{t\to\infty} \left[t^{\frac{3}{2}} e^{\lambda_1}|u(t,x) -\mathcal{M} P^y_d(t,x)|\right]= Q^y(x)>0\quad\text{ for all }x\in\H^d, \text{ for any } y\in\H^d,
\]
for a certain positive function $Q^y(x)$which depends also on the initial data. This result stands in sharp contrast with the Euclidean case, where there is in fact convergence to the fundamental solution centred around the centre of mass of the initial datum, as long as $u_0\in L^1(\R^d)$.

Very recently, E. Papageorgiou improved significantly on those results in~\cite{Papageorgiou2025}, following her work with Anker and Zhang in~\cite{APZ}, by providing the asymptotic profile of non-radially symmetric solutions in $L^p$, although once again without speed of convergence. Using tools from geometric harmonic analysis she proved the following. Due to the geometry of the space, the domain where most of the $L^p$ mass of the fundamental solution is located (referred as \textit{$L^p$ critical region}) varies a lot depending on whether $1\leq p<2$ (an annulus at distance $(d-1)(\frac{2}{p}-1)t$ from the origin), $p=2$ (an annulus at distance of order $\sqrt{t}$ from the origin), or $2<p\leq \infty$ (a ball centred at the origin of radius approx. $\sqrt{t}$); the reader is invited to consider the explicit formula of the kernel in dimension $d=3$ that can be found, for example, in~\cite{grigoryan1998heat}, and verify this very interesting phenomenon. This affects the asymptotics of the quotient of two heat kernels located at different points (which, as we will see, is fundamental in the study of the large-time behaviour of non-radially symmetric solutions), and as a consequence the $L^p$ convergence is modified accordingly. Using the polar coordinates for $\H^d$, where $x=(r_x, \theta_x)$, $y=(r_y, \theta_y)$ with $r_x, r_y\geq 0,\ \theta_x, \theta_y\in \S^{d-1}$ (see Section~\ref{sect:polar_coordinates} thereafter), let
\begin{equation}\label{eq:Phi_p_papagoergiou}
	\Phi_p(x) = \begin{cases}\displaystyle\int_{\H^d} u_0(y)\, \left[ \cosh (r_y)-\sinh (r_y) \cdot \langle\theta_x,\theta_y\rangle \right]^{-\frac{d-1}{p}}\ \d\mu(y)\quad&\text{when}\quad 1\leq p\leq 2,\\[20pt]
\displaystyle\frac{1}{\mathcal{S}_0(\dist(x,O))}\int_{\H^d} u_0(y) \mathcal{S}_0(\dist(x,y))\ \d\mu(y)&\text{when}\quad 2 < p,
\end{cases}
\end{equation}
where $O$ stands for the origin point, $\dist$ goes for the hyperbolic distance and $\mathcal{S}_0$ is the \textit{elementary spherical function of index 0} employed when computing the Fourier transform; its specific formula can be found in~\cite[Appendix]{Papageorgiou2025}. A sufficient condition for $\Phi_p$ to be finite is that
\begin{equation}\label{eq:condition_papgeorgiiou}
\begin{cases}\Phi_p(x)\leq \displaystyle\int_{\H^d} u_0(y)\, e^{\frac{d-1}{p}r_y}\ \d\mu(y)<\infty\quad&\text{when}\quad 1\leq p\leq 2,\\[20pt]
	\Phi_p(x)\leq \displaystyle\int_{\H^d} u_0(y)\, e^{\frac{d-1}{2}r_y}\ \d\mu(y)<\infty&\text{when}\quad 2 < p,
\end{cases}
\end{equation}
and when this is satisfied Papageorgiou proves that, for all $p\geq 1$ and for compactly supported initial data,
\begin{equation}\label{eq:convergence_papageorgiou}
(\|P_d(t,\cdot)\|_{L^p(\H^d)})^{-1}\cdot \|u(t,x)-\Phi_p(x)P_d(t,x)\|_{L^p(\H^d)}\to 0\quad\text{as}\quad t\to\infty.
\end{equation}
We highlight that in the case $1\leq p\leq 2$, if we consider polar coordinates, the function $\Phi_p$ depends only on the spherical coordinate $\theta_x$, and that condition~\eqref{eq:condition_papgeorgiiou}, while sufficient, may not be necessary for~\eqref{eq:convergence_papageorgiou}, as we will see. Also, most notably, Papageorgiou proves such convergence results not only on the Hyperbolic space, but on
arbitrary-rank Riemannian symmetric spaces of non-compact-type $G/K$ for (possibly) non-bi-$K$ invariant initial data (which means, in our particular case, not radially symmetric data). 

\begin{rem}
	%When developing our study, we arrived to the same results as Papageorgiou independently in the case $p=1$. It was later that we learnt about her work~\cite{Papageorgiou2025}, which offered much insight on this subject, and allowed us to shorten greatly this article. However, w
	We find that the techniques coming from geometric harmonic analysis are somewhat distant to those of PDEs, and as such we dedicate some pages of this work to briefly introduce independent techniques that provide similar results to hers and complement them, with a language more natural to readers with a background on PDEs.
\end{rem}

In our present work we improve on the aforementioned results by providing a speed of convergence of the $L^1$ norm of the difference between $u$ and its asymptotic profile of the form
\[
\|u(t,x)-F(t,x)\|_{L^1(\H^d)}\leq h(t), \quad\text{where}\quad h(t)\to 0 \text{ as } t\to\infty
\]
for certain asymptotic profile $F$ and convergence speed $h$, for a more general class of initial data than~\cite{Papageorgiou2025}. In the case of radial solutions this will imply also a speed of convergence in $L^\infty$ by means of the $L^1-L^\infty$ smoothing effect and in all other $L^p$ norms by interpolation; this direct improvement does not apply in the general case of non-radial solutions. We will also improve those results by obtaining a speed of convergence of horospheric solutions (covered partially in~\cite{vazquez1992asymptotic} and not included in ~\cite{Papageorgiou2025}). Finally, our approach has the advantage of deriving our results with PDEs tools, avoiding most of the geommetric harmonic analysis techniques employed by Anker, Papageorgiou and Zhang, presenting all of them under a unified framework, with a strong enough technique to tackle all the cases (radial, horosopheric or integrable solutions) similarly. 

The aforementioned unified framework is that of entropy estimates. We will employ a refinement of these ideas, of which an early presentation can be found in~\cite{Arnold-Markovich-Toscani-Uterretiter-2001,toscani,Toscani99}, adapted to work in $\H^d$, a Riemannian manifold whose particular geometry makes it so the classical theory does not apply directly; up to our knowledge, this is the first time that such ideas are employed in non-compact Riemannian manifolds with non-trivial curvature. As we will see, the Riemannian metric of the space (and, in particular, the differential of the measure of this space) will play a fundamental role in our computations. Since some previous definitions about the coordinate system for $\H^d$ are needed, we postpone the introduction of these ideas to later in this section. For an introductory bibliography on entropy estimates applied to diffusion processes we mention~\cite{Arnold-Carrillo-Desvillettes-et-al-2004,Vazquez-Arxiv-2018}. This technique has been employed fruitfully to other diffusion processes, like the Porous Medium Equation~\cite{Carrillo-Toscani-2000}, the Fast Diffusion Equation~\cite{Bonforte-Dolbeault-Grillo-Vazquez-2010}, the Fractional Heat Equation~\cite{Gentil-Imbert-2008}, or the Heat Equation in exterior domains~\cite{CGQ}, just to name a few.

Since this article is, in principle, oriented to readers closer to the world of PDEs, we will devote some time to introduce, briefly, the geometric concepts needed in this work.

\subsection{Introduction to the Geometry of the space $\H^d$}

To describe points \(x \in \mathbb{H}^d\), one first needs to introduce a coordinate system. Unlike in Euclidean space, there is no universally accepted “canonical” coordinate system in \(\mathbb{H}^d\); instead, several different models are available to represent hyperbolic spaces. In this work, we focus on coordinate systems that are best suited to capture the geometric symmetries inherent in \(\mathbb{H}^d\). 

We say that an isometry of $\mathbb{H}^d$ is \textit{elliptic} when it fix all geodesic spheres centered at some point, and we say that it is \textit{parabolic} when it fix a special class of hypersurfaces of $\mathbb{H}^d$, called \textit{horospheres}. To deal with functions which are invariant under one of these two groups of isometries we use coordinates in which one of the parameters is the distance to a given sphere or horosphere. These kind of coordinates are a special case of a general construction, called \textit{Fermi coordinates}. In Section~\ref{sect:construction_coordinates} of the appendix we provide an introduction of the construction of such coordinate systems for the interested readers. We now present the two coordinate systems we will use along this paper. 

\subsubsection{Polar coordinates, also called elliptic coordinates}\label{sect:polar_coordinates}

We can describe the group of elliptic isometries as the subgroup of $\textup{Iso} (\mathbb{H}^d)$ whose principal orbits are given by geodesic spheres centred at some point.

Given $o \in \mathbb{H}^d$ and $r_0 >0$, we define the geodesic ball centred in $o$ and with radius $r_0>0$ as the set $B(o, r_0 )= \{x \in \mathbb{H}^d ~\colon~ \textup{dist}(o,x)< r_0\}$, and the corresponding geodesic sphere as $S(o, r_0 ):= \partial B(o, r_0 )$. Take $N$ to be the normal vector field on $S(o, r_0 )$ that points outside $B(o, r_0 )$. Then,  since each geodesic sphere is diffeomorphic to $\mathbb{S}^{d-1}$, by taking a coordinate neighbourhood centred at $S(o, r_0 )$ with respect to $N$, we get that $\mathbb{H}^d \setminus \{o\}$ is diffeomorphic to $(-r_0, +\infty) \times \mathbb{S}^{d-1}$ and the hyperbolic metric takes the form
\[
g_{\mathbb{H}^d}= \d r^2 + \sinh^2(r+r_0)g_{\mathbb{S}^{d-1}}
\]
in these coordinates, where $g_{\mathbb{S}^{d-1}}$ is the round metric in the sphere $\mathbb{S}^{d-1}$. 

By taking $r_0 \to 0$ we get the so called \textit{polar coordinates} in $\mathbb{H}^d$, which have the following interpretation: for any $x \neq o$, we describe the point by its distance to $o$, $r= \textup{dist}_{\mathbb{H}^d} (o,x)$, and the direction $\theta \in \mathbb{S}^{d-1} \subset \mathbb{R}^d \equiv T_o \mathbb{H}^{d-1}$ such that $\textup{exp}_o (r \theta) =x$. In this case, $\mathbb{H}^d \setminus \{ o \}$ is diffeomorphic to $(0,+\infty) \times \mathbb{S}^{d-1}$ and the metric is given by
\[
g_{\mathbb{H}^d}= \d r^2 + \sinh^2(r)g_{\mathbb{S}^{d-1}}
\]
in these coordinates. Also, the Laplace-Beltrami operator reads
\begin{equation}\label{eq:laplace_beltrami_polar_coordinates}
	\Delta_{\H^d} u = \frac{1}{\sinh^{d-1}(r)}\partial_r\left(\sinh^{d-1}(r)\cdot \partial_r u\right) + \frac{1}{\sinh^2(r)}\Delta_{\mathbb{S}^{d-1}}u
\end{equation}
and the volume form is given by $\d\mu(r,\theta)=\sinh^{d-1}(r)\d r \d \theta$, where $\d \theta$ denotes the volume form of the standard round metric in $\mathbb{S}^{d-1}$. 

%\subsubsection{Hyperbolic coordinates}
%Let \(o \in \mathbb{H}^d\) be a fixed point, and let \(P \subset T_o \mathbb{H}^d\) be a hyperplane in the tangent space. We define the subset $\mathcal{P} := \exp_o(P)$ to be a \textit{totally geodesic hypersurface}. Such hypersurfaces arise as the principal orbits of the group of hyperbolic isometries of \(\mathbb{H}^d\).  
%
%Fix a normal field $N$ to $\mathcal{P}$. Then, by introducing a coordinate neighborhood centered at \(\mathcal{P} \subset \mathbb{H}^d\), one obtains a diffeomorphism $\mathbb{H}^d \cong \mathbb{R} \times \mathbb{R}^{d-1}$.
%When working in this coordinate neighborhood, we shall refer to it as employing \textit{hyperbolic coordinates}. Since every totally geodesic hypersurface is isometric to \(\mathbb{H}^{d-1}\), the hyperbolic metric in these coordinates takes the form  
%\[
%g_{\mathbb{H}^d} = \d r^2 + \cosh^2(r)\, g_{\mathbb{H}^{d-1}}
%\]
%and, consequently, the Laplace--Beltrami operator is given by  
%\begin{equation}\label{eq:laplace_beltrami_hyperbolic_coordinates}
%	\Delta_{\mathbb{H}^d} u \;=\; \frac{1}{\cosh^{d-1}(r)}\,\partial_r\!\Big(\cosh^{d-1}(r)\,\partial_r u\Big) \;+\; \frac{1}{\cosh^2(r)}\,\Delta_{\mathbb{H}^{d-1}} u.
%\end{equation}
%With respect to this metric, the volume form reads $\d \mu(r,w) = \cosh^{d-1}(r)\,\d r\,\d w$, where \(\d w\) denotes the volume element of \(\mathbb{H}^{d-1}\).  

\subsubsection{Parabolic coordinates, also called horospheric coordinates}\label{sect:horospheric_coordinates}

Let $o \in \mathbb{H}^d$ and take $\gamma : [0, +\infty) \to \mathbb{H}^d$ a geodesic ray starting at $o$ and parametrised  by the arc length, meaning that $\textup{dist} (o, \gamma (t)) =t$ for any $t \in [0,+\infty)$. Then the set given by the limit
\[
\mathcal{H}B = \displaystyle\lim_{t \to +\infty} B(\gamma (t), t)
\]
is an open set of $\mathbb{H}^d$ called a \textit{horoball}. Its boundary $\mathcal{H} = \partial \mathcal{H}B$ is a smooth hypersurface which is referred to as a horosphere. Horospheres are invariant under the group of parabolic isometries of $\mathbb{H}^d$, and can be thought as spheres centered at some point of the ideal boundary of $\mathbb{H}^d$ (see for example \cite{petersen2016riemannian} for a description of this set).

Fix a horosphere \(\mathcal{H} \subset \mathbb{H}^d\), and denote by \(\mathcal{H}B\) 
the horoball it bounds. Let \(N\) be the unit normal vector field pointing toward 
the complement \(\mathbb{H}^d \setminus \mathcal{H}B\). With this convention, 
the signed distance to \(\mathcal{H}\) is positive outside \(\mathcal{H}B\) 
and negative inside. Since every horosphere is isometric to the euclidean space 
\(\mathbb{R}^{d-1}\), the corresponding Fermi coordinate neighborhood provides 
a parametrization of \(\mathbb{H}^d\) by \(\mathbb{R} \times \mathbb{R}^{d-1}\), 
in which the hyperbolic metric takes the form 
\[
g_{\mathbb{H}^d}= \d r^2 + e^{2 r} g_{\mathbb{R}^{d-1}}.
\]
With respect to this coordinate system, the volume form of \(\mathbb{H}^d\) is $\d\mu(r,y) = e^{(d-1)r} \, \d r \, \d y,$ where \(\d y\) denotes the Euclidean volume form on \(\mathbb{R}^{d-1}\). Moreover, 
for a function \(u\), the Laplace--Beltrami operator is given by  
\begin{equation}\label{laplacianParabolic}
	\Delta_{\mathbb{H}^d} u 
	= \frac{1}{e^{(d-1)r}} \, \partial_r \!\left(e^{(d-1)r} \cdot \partial_r u\right) 
	+ \frac{1}{e^{2r}} \, \Delta_{\mathbb{R}^{d-1}} u.
\end{equation}

\begin{rem}
	There is a third possible coordinate system corresponding to the hyperbolic subgroup of $\text{Iso}(\H^d)$ that we do not cover in this article. Let $\mathcal{P}\subset\H^d$ be a hyperplane. Then any $x\in \H^d$ can be expressed as a point $w\in \mathcal{P}$ and a signed distance $r\in\R$ between $w$ and its projection over $\mathcal{P}$, so $x=(w, r)$. This is often referred as employing \textit{hyperbolic coordinates}. Since every totally geodesic hypersurface is isometric to \(\mathbb{H}^{d-1}\), the hyperbolic metric in these coordinates takes the form  
	\[
	g_{\mathbb{H}^d} = \d r^2 + \cosh^2(r)\, g_{\mathbb{H}^{d-1}}.
	\]
	The ideas employed in this work are applicable, \textit{mutatis mutandis}, to solutions that depend only on this distance $r$ to a certain hyperplane $\mathcal{P}$; we can refer to such solutions as \textit{planar} solutions. However, many more computations and auxiliary results are needed in order to study this case, and as a consequence we decided to exclude \textit{planar} solutions from the present article and leave them for future work.
\end{rem}

\subsection{The heat kernel in hyperbolic space}
Given any $y\in \mathbb{H}^d$, we will denote as $P^y_d(t,x)$ the fundamental solution of equation~\eqref{eq:heatEquationHyperbolic} at time $t>0$ and point $x\in \mathbb{H}^d$ with initial datum $u_0(x)= \delta_y(x)$, Dirac's Delta function centred at the point $y$. As in the euclidean case, any solution with integrable initial datum can be written as
\[
u(t,x)=\int_{\mathbb{H}^d} u_0(y)P^y_d(t,x) \d\mu(y)
\]
It is easy to verify a solution of the heat equation preserves its mass, i.e.,
\[
\|u(t,\cdot)\|_{L^1(\mathbb{H}^d)} = \|u_0\|_{L^1(\mathbb{H}^d)}=\int_{\mathbb{H}^{d}} u_0(y) \d\mu(y).
\]
Since the value of $P^y_d(t,x)$ depends only on time and the distance between $x$ and $y$, we will write
\[
P^y_d(t,x) = G_d(t,\lambda(x,y)),\quad\text{where}\quad \lambda(x,y)=\text{dist}(x,y)
\]
for a certain function $G_d$ whose explicit formula can be found in \cite{Davies_1989, Grigor'yan}. For any $x\in \mathbb{H}^d$, write $x \equiv (r_x, \theta_x)$ for its polar coordinates with respect to a fixed origin $o \in \mathbb{H}^d$, where we recall that $r_x=\dist(x,o)\geq 0$ and $\theta_x\in \mathbb{S}^{d-1}$. Then, by the second hyperbolic law of the cosines, we have
\begin{equation}\label{eq:formula_lambda}
	\lambda(x,y)= \text{arccosh}(\cosh(r_x)\cosh(r_y)-\sinh(r_x)\sinh(r_y)\cdot \langle\theta_x,\theta_y\rangle),
\end{equation}
where $\langle \cdot ,\cdot \rangle$ denotes the euclidean scalar product, and thus $\langle\theta_x,\theta_y\rangle$ is the cosine of the angle formed by the geodesics connecting the points $x$ and $y$ to the origin. If we call such angle $\vartheta$, then we have
\[
\vartheta(\theta_x,\theta_y) = \arccos(\langle\theta_x,\theta_y\rangle)
\]
and then the formula for $\lambda$ becomes
\[
\lambda(x,y)= \text{arccosh}(\cosh(r_x)\cosh(r_y)-\sinh(r_x)\sinh(r_y)\cdot \cos(\vartheta)),
\]
writing $\vartheta$ instead of $\vartheta(\theta_x,\theta_y)$ for simplicity.

The formula for the solution $u$ becomes then
%\begin{equation}\label{eq:solution_as_convolution_with_kernel}
\[
	u(t,x)= u(t,r_x,\theta_x) = \int_{\mathbb{S}^{d-1}}\int_0^\infty u_0(r_y,\theta_y)\cdot G_d(t,\lambda(x,y))\cdot  \sinh^{d-1}(r_y)\d r_y \d \theta_y.
\]
%\end{equation}
There are, moreover, two notable relations regarding the function $G_d$, see for example \cite{grigoryan1998heat}. First, if we define
\[
h_d(t,r):= (4\pi t)^{-\frac{d}{2}}(1+r+t)^{\frac{d-3}{2}}(1+r)e^{-\frac{(r+(d-1)t)^2}{4t}}
\]
then there exists a couple of positive constants $c_d, C_d$ such that, for all $t>0, r>0$,
\begin{equation}\label{eq:bound_heat_kernel_in_terms_of_h}
	c_d\ h_d(t,r)\leq G_d(t,r)\leq C_d\ h_d(t,r),
\end{equation}
which improves to the complete asymptotics
\[
	G_d(t,r)\sim \gamma\left(\frac{r}{2t}\right)t^{-\frac{3}{2}}\ r\ e^{-\frac{(r+(d-1)t)^2}{4t}}
\]
as $t\to\infty$ and $r\to \infty$, where $\gamma$ is a known function depending on the Gamma-function, see~\cite{Anker-Ji-1999} and~\cite[Appendix]{Papageorgiou2025} for the details.

Second, the derivative of $G_d$ satisfies the formula
\begin{equation}\label{eq:recurrence_derivative_kernels}
	\partial_r G_d(t,r) = -2\pi e^{dt}\sinh(r) G_{d+2}(t,r).
\end{equation}

\subsection{Introduction to the Entropy Estimates}

Let us discuss briefly the ideas behind the entropy estimates in polar coordinates. From the work of \cite{vazquez1992asymptotic} (see also \cite[Appendix A]{LM}) we see that the bulk of the mass of the heat kernel in $\H^d$ centred around a point $\mathcal{O}$ is located, at points $x=(r,\theta)$ and at time $t$, at a distance $r=\text{dist}(\mathcal{O}, x)= (d-1)t$. Based on this we define the change of variables
\[
t(\tau)=e^\tau - 1,\quad r(\tau, \rho) = (d-1)e^\tau + e^\frac{\tau}{2}\rho,
\]
and, given a solution $u$ of~\eqref{eq:heatEquationHyperbolic}, consider the auxiliary function
\[
v(\tau, \rho, \theta) = e^\frac{\tau}{2}\cdot \sinh^{d-1}(r(\tau, \rho))\cdot u(t(\tau), r(\tau, \rho), \theta).
\]
This function $v$ satisfies a certain parabolic equation. Now, we consider the adapted Kullback-Leibler entropy functional
\[
H(\tau):= \int_{\mathbb{S}^{d-1}}\int_{\rho_0(\tau)}^\infty v\cdot\ln\left(\frac{v}{\mathcal{F}}\right)\ \d \rho\ \d \theta,
\]
for a certain profile $\mathcal{F}(\tau, \rho, \theta)$ that will play the role of the \textit{equilibrium} towards $v$ is converging; this is called the entropy of $v$ relative to $\mathcal{F}$, or relative entropy, for brevity. The specific definition of $\mathcal{F}$ depends on the characteristics of the problem, and will not be the same for radial, horospheric or integrable solutions. There are two noteworthy deviations from the classical entropy techniques in this case. First, there is an unusual dependence on $\tau$ both on the limits of the integral and on the profile $\mathcal{F}$ (called for this reason a \textit{transient equilibrium}); nonetheless, the method holds with some extra steps, see \cite{CGQ}. Second, there is an unexpected factor $\sinh^{d-1}(r)$ appearing in the definition of $v$; it corresponds to the volume form $\d\mu = \sinh^{d-1}(r) \d r \d  \theta$. By including it here we bring, in the sense of integrals, the function $v$ from the geometry of $\H^d$ into the geometry of $\R^d$, where computations can be carried out. It is in this sense that we claim to adapt the entropy techniques to the frame of non-compact Riemannian manifolds.

Via some analysis, the goal is to prove a decay of the form
\[
H(\tau)\leq (C+H(0))e^{-\lambda \tau}
\]
for some positive constants $C, \lambda$. We often write indistinctively $H(\tau)$ or $H(v)$ depending on whether we want to emphasize the dependence on the function $v$ or on the time $\tau$, and also $H(u_0), H(v_0)$ or $H(0)$ depending on whether we want to emphasize the dependence on the initial datum $u_0$, $v_0=v(0, \rho, \theta)$ or on the time $\tau=0$, but we are referring to the same quantities; the context should make it clear. With this decay at hand, the Csiszár-Kullback inequality yields
\[
\|v-\mathcal{F}\|_{L^1}\leq \sqrt{C+H(0)}\cdot e^{-\frac{\lambda}{2}\tau},
\]
and then, undoing the change of variables, we obtain convergence of $u$ to a certain profile $\mathcal{F}^*$ in the $L^1(\d\mu)$ norm. In some cases, an $L^1-L^\infty$ regularisation effect will even provide convergence in the $L^\infty$ norm.

\subsection{Main results}

According to each of the two coordinate systems that we have presented there are symmetries on the initial datum that fits them, and these symmetries affect, of course, the large-time behaviour of the solutions. First, for the initial data that depends only on the distance to a certain point $\mathcal{O}\in\H^d$, the natural coordinates are the polar ones with pole at $\mathcal{O}$, where $x=(r_x,\theta_x)$, with $r_x\geq 0, \theta_x\in\S^{d-1}$. We recall that we write the heat kernel as
\[
P_d(t, x) = G_d(t, r_x),
\]
since it is a radial function centred around the pole $\mathcal{O}$ that depends only on time and the distance $r_x$, where we write, for simplicity of notation, $P_d$ for the fundamental solution centred at the origin $\mathcal{O}$. We recall the definition of $\lambda_1$ as the bottom of the spectrum of the Laplace-Beltrami operator given in~\eqref{eq:definition_bottom_spectrum_laplacian}.

The following result is a consequence of Thorems~\ref{thm:L^1_radially_symmetric_asymptotic} and~\ref{thm:radially_symmetric_L^infty asymtotic}, in Section~\ref{sect:elliptic}.
\begin{thm}[Radially symmetric initial data]
	Let the polar coordinates be such that a point $x\in\H^d$ is given by $x=(r_x, \theta_x)$, where $r_x\geq 0$ represents the hyperbolic distance between $x$ and the pole $\mathcal{O}$ and $\theta_x\in\S^{d-1}$ represents the direction of the unit vector from $\mathcal{O}$ to $x$. Let $u_0\in L^1(\H^d)$ such that $u_0\geq 0$ and $u_0(x)=u_0(r_x)$ for all $x\in\H^d$. Then $u(t,x)=u(t, r_x)$ for all $t\geq 0$ and there exists a $t_0$ such that for all $t\geq t_0$,
	\[
	\begin{aligned}
	\| u(t,\cdot) - \mathcal{M}_{\mathcal{R}}P_d(t,\cdot)  \|_{L^1(\mathbb{H}^d)}&=\| u(t,r_x) - \mathcal{M}_{\mathcal{R}}G_d(t,r_x)  \|_{L^1(\mathbb{H}^d)}\\[10pt]
	&\lesssim \left(\sqrt{H^*(G_d)}+\sqrt{H^*(u_0)}\right) \cdot t^{-\frac{1}{2}},
	\end{aligned}
	\]
	where $\mathcal{M}_{\mathcal{R}}=\|u_0\|_{L^1(\H^d)}$ and $H^*(u_0)= K(d, \mathcal{M}_{\mathcal{R}}) + H(u_0)$, for some constant $K$ that depends only on the dimension $d$ and the mass $\mathcal{M}_{\mathcal{R}}$. Moreover, as a consequence of the $L^1-L^\infty$ regularizing effect,
	\[
	\begin{aligned}
		\| u(t,\cdot) - \mathcal{M}_{\mathcal{R}}P_d(t,\cdot)  \|_{L^\infty(\mathbb{H}^d)}\lesssim \left(\sqrt{H^*(G_d)}+\sqrt{H^*(u_0)}\right) \cdot e^{-\lambda_1 t}t^{-2}.
	\end{aligned}
	\]
\end{thm}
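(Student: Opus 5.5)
The plan follows the entropy programme sketched in the introduction, applied with the transient profile taken to be the rescaled heat kernel itself. We work in the self-similar variables $t=e^\tau-1$, $r=(d-1)e^\tau+e^{\tau/2}\rho$ and set
\[
v(\tau,\rho)=e^{\tau/2}\sinh^{d-1}(r)\,u(t,r)\,|\S^{d-1}|,
\]
with $F(\tau,\rho)$ defined in the same way from $\mathcal{M}_{\mathcal{R}}G_d$. The first step is to derive the one-dimensional Fokker--Planck-type equation for $v$ on $\rho>\rho_0(\tau)=-(d-1)e^{\tau/2}$. The radial Laplacian \eqref{eq:laplace_beltrami_polar_coordinates} becomes $\partial_r^2u+(d-1)\coth(r)\partial_ru$. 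After multiplying by $\sinh^{d-1}(r)$, the drift $(d-1)\coth(r)$ splits as $(d-1)$ plus a term of order $e^{-2r}$. The constant part is absorbed by the moving frame $r-(d-1)t$, so $v$ solves
\[
\partial_\tau v=\partial_\rho^2 v+\tfrac12\partial_\rho(\rho v)+\partial_\rho(E\,v),
\]
with boundary flux zero at $\rho_0(\tau)$. Here $E(\tau,\rho)$ is an error term coming from $\coth r-1$ and from the $-1$ in $t=e^\tau-1$, and it is exponentially small away from the boundary. The profile $F$ solves the same equation, since $G_d$ is itself a radial solution.

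The second step is the entropy computation. We differentiate
\[
H(\tau)=\int_{\rho_0(\tau)}^\infty v\ln(v/F)\,\d\rho .
\]
Because $v$ and $F$ satisfy the same linear equation with the same no-flux boundary, the standard computation gives $H'(\tau)=-I(v|F)$, the relative Fisher information, plus a boundary term from the moving endpoint $\rho_0(\tau)$. That boundary term is controlled by the Gaussian bounds \eqref{eq:bound_heat_kernel_in_terms_of_h}: near $r=0$ both $v$ and $F$ carry the weight $\sinh^{d-1}(r)$, and the heat-kernel comparison gives $e^{-ce^\tau}$ smallness there. To close, we need a log-Sobolev inequality for $F$. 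The bounds \eqref{eq:bound_heat_kernel_in_terms_of_h} show that $F$ is a bounded perturbation of the Gaussian $e^{-\rho^2/4}$, up to the polynomial factors $(1+r+t)^{(d-3)/2}(1+r)$. Normalised by $e^{\tau}$-scalings, these factors have ratio close to $1$ on the bulk, so Holley--Stroock gives a uniform log-Sobolev constant for large $\tau$. This yields $H'\le -\lambda H+Ce^{-\tau}$, hence $H(\tau)\le (H(0)+K)e^{-\lambda\tau}$ with some $\lambda\ge1$. The constant $K$ collects the errors and depends on $d$ and $\mathcal{M}_{\mathcal{R}}$, and one may shift the initial time to $t_0$ if $H(0)$ is infinite for $G_d$ at $t=0$.

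The third step is to undo the change of variables. By Csiszár--Kullback, $\|v-F\|_{L^1(\d\rho)}\lesssim\sqrt{H^*}e^{-\lambda\tau/2}$. The $L^1(\d\rho)$ norm equals $\|u-\mathcal{M}_{\mathcal{R}}G_d\|_{L^1(\H^d)}$ exactly, because the Jacobian was absorbed into $v$. Since $e^{-\tau/2}\sim t^{-1/2}$ for $\lambda=1$, this gives the stated rate. The appearance of both $\sqrt{H^*(G_d)}$ and $\sqrt{H^*(u_0)}$ suggests it may be cleaner to compare each of $v$ and $F$ with a common reference equilibrium (the Gaussian, or the shifted kernel) and use the triangle inequality; I would do that if the direct relative entropy between $v$ and $F$ is awkward.

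For $L^\infty$, we write $u(t)-\mathcal{M}_{\mathcal{R}}G_d(t)=e^{(t/2)\Delta}\big(u(t/2)-\mathcal{M}_{\mathcal{R}}G_d(t/2)\big)$ and use the $L^1$--$L^\infty$ bound $\|e^{s\Delta}\|_{1\to\infty}\lesssim \sup G_d(s,\cdot)\sim s^{-3/2}e^{-\lambda_1 s}$. This gives $t^{-1/2}t^{-3/2}e^{-\lambda_1 t/2}$, which loses half of the exponent. To recover $e^{-\lambda_1 t}$, I would use the radial structure: the difference is radial with zero mass, and a radial $L^1$ function spreads at distance about $(d-1)t$. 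So one should split the time as $t-s$ with $s$ fixed, apply the kernel bound only over a fixed time window, and use the pointwise Gaussian bounds of $u$ and $G_d$ near the origin. The smoothing step is therefore applied with the weight $e^{-\lambda_1 t}$ coming from the $L^1$ rate of the solution itself. This $L^\infty$ upgrade is the main obstacle, together with the uniform log-Sobolev constant for the transient profile; I would first try the fixed-window smoothing combined with radial Harnack estimates.
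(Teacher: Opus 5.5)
\textbf{The $L^1$ rate.} In your main line, the step that fails is the uniform log-Sobolev inequality with constant $\lambda\ge 1$ for the rescaled kernel $F$.

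\begin{itemize}
\item Holley--Stroock returns the reference constant multiplied by $e^{-\operatorname{osc}(A)}$, where the oscillation is taken over the whole half-line $(\rho_0(\tau),\infty)$, not over the bulk.
\item The only global information you invoke is $c_dh_d\le G_d\le C_dh_d$. This bounds $\operatorname{osc}(A)$ only by $\ln(C_d/c_d)$, a fixed number with no reason to be small; in $d=3$ the ratio $G_3/h_3$ runs from $1$ to $2$. The polynomial factors of $h_d$ could be absorbed by Bakry--\'Emery, since $\partial_\rho^2\bigl(-\ln(\sinh^{d-1}(r)h_d)\bigr)\ge e^\tau/(2t)>1/2$, but the ratio $G_d/h_d$ cannot.
\item So you only get $H(\tau)\lesssim e^{-\lambda\tau}$ with some $\lambda<1$, i.e.\ the rate $t^{-\lambda/2}$ rather than $t^{-1/2}$.
\end{itemize}

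To rescue your route you would need uniform log-concavity of $\sinh^{d-1}(r)G_d(t,r)$ in $r$ with modulus $1/(2t)$. This is true for $d=3$, where $\sinh^2(r)G_3\propto r\sinh r\,e^{-r^2/(4t)}$, but two-sided bounds cannot supply it in general.

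The paper never takes entropy relative to $G_d$. Instead:
\begin{itemize}
\item It uses the explicit zero-flux state $V=C(\tau)\sinh^{d-1}(r)e^{-(\rho+2(d-1)e^{\tau/2})^2/4}$ of the frozen operator. Convexity of $-\ln\sinh$ gives the log-Sobolev constant exactly $1$ (Corollary~\ref{cor:log_sobolev}).
\item Since $V$ is not a solution, $H'$ picks up the term $-\int vV_\tau/V\le k_d\mathcal{M}_{\mathcal{R}}e^{\tau-m_de^\tau}$, via the bound on $C'/C$ in Lemma~\ref{lem:constant_C(t)}. This forcing is doubly exponentially small, so $e^\tau H(\tau)$ stays bounded. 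With $\lambda=1$, a forcing $Ce^{-\tau}$ as in your sketch would only give $\tau e^{-\tau}$.
\item Then $u$ and the time-shifted $\mathcal{M}_{\mathcal{R}}G_d$ are each compared with $C(t)\Gamma(t+1,r+(d-1)(t+1))$. The triangle inequality produces $\sqrt{H^*(G_d)}+\sqrt{H^*(u_0)}$.
\end{itemize}
Your fallback points this way, but it is what actually carries the proof. Also, the common reference must contain the factor $\sinh^{d-1}(r)$: against a bare Gaussian, the drift mismatch $(d-1)e^{\tau/2}(1-\coth r)$ is unbounded near $\rho_0(\tau)$.

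\textbf{The $L^\infty$ rate.} You give no argument here, and none can exist: the bound $t^{-2}e^{-\lambda_1 t}$ is false for every nonzero radial datum. For $d=3$ one has $G_3(t,r)=(4\pi t)^{-3/2}\frac{r}{\sinh r}e^{-t-r^2/(4t)}$ and $\lambda_1=1$, so
\[
u(t,\mathcal{O})-\mathcal{M}_{\mathcal{R}}G_3(t,0)=(4\pi t)^{-3/2}e^{-t}\int_{\H^3}u_0(y)\Big(\frac{r_y}{\sinh r_y}\,e^{-r_y^2/(4t)}-1\Big)\d\mu(y).
\]
Its modulus is at least $(4\pi)^{-3/2}c_0\,t^{-3/2}e^{-\lambda_1 t}$, with $c_0=\int_{\H^3}u_0(y)\,(1-r_y/\sinh r_y)\d\mu(y)>0$.

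In any dimension the same conclusion follows from two facts:
\begin{itemize}
\item $t^{3/2}e^{\lambda_1 t}G_d(t,r)\to c_d\,\phi_0(r)$ for fixed $r$, where $\phi_0$ is the ground spherical function, with $\phi_0(0)=1>\phi_0(r)$ for $r>0$;
\item dominated convergence.
\end{itemize}
This is consistent with Papageorgiou's $p>2$ profile recalled in the introduction, which differs from $\mathcal{M}_{\mathcal{R}}$ near $\mathcal{O}$ even for radial data.

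Your remark that smoothing from time $t/2$ loses a factor $e^{\lambda_1 t/2}$ is correct, and the paper's one-line argument does not get around it. The paper writes $\|\tilde u(t)\|_{L^\infty}\le C(d)t^{-3/2}e^{-\lambda_1 t}\|\tilde u(t)\|_{L^1}$, with the $L^1$ norm at the same time $t$. The smoothing effect only gives $\|\tilde u(t)\|_{L^\infty}\le\sup G_d(t-s,\cdot)\,\|\tilde u(s)\|_{L^1}$ for $s<t$.

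The correct statement is $\|u(t)-\mathcal{M}_{\mathcal{R}}P_d(t)\|_{L^\infty}\asymp t^{-3/2}e^{-\lambda_1 t}$, where the upper bound is trivial from $\sup G_d(t,\cdot)$. So neither a fixed-window nor a Harnack refinement can produce the extra $t^{-1/2}$.
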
  

%In the case where the initial datum depends only on the distance to a hyperplane $\mathcal{P}\subset\H^d$, the we employ hyperbolic coordinates, where $x=(r,w)$, with $r\in\R, w\in\H^{d-1}$. Again, the solution will only depend on the parameter $r$, ad we have the following.
%
%\begin{thm}
%	Let $u_0\in L^1(\H^d)$ such that $u_0(x)=u_0(r)$ for all $x\in\H^d$. Then $u(t,x)=u(t, r)$ for all $t\geq 0$ and
%	\[
%	\ag{TO COMPLETE}
%	\]
%\end{thm}

Next, when the initial datum depends only on the distance to a horosphere $\mathcal{H}$, then the same can be said about the solution, so we choose parabolic coordinates $x=(r_x,y_x)$, with $r_x\in\R, y_x\in\mathcal{H}$ and write $\d\nu(r)=e^{(d-1)r}\d r$ for the horospherical part of the volume form of $\mathbb{H}^d$. We obtain the following theorem as a consequence of Theorems~\ref{thm:convregece_horoshepric_L^1} and~\ref{thm:convregece_horoshepric_L^infty}, in Section~\ref{sect:horospheric}.

\begin{thm}[Horospheric initial data]
	Let the horospheric coordinates be such that a point $x\in\H^d$ is given by $x=(r_x, y_x)$, where $r_x\in\R$ represents the signed hyperbolic distance between $x$ and the horosphere $\mathcal{H}$ and $y_x\in\mathcal{H}$ represents the projection of $x$ onto $\mathcal{H}$. Let $u_0\in L^1(\R, \d\mu)$ such that $u_0\geq 0$ and $u_0(x)=u_0(r_x)$ for all $x\in\H^d$. Then $u(t,x)=u(t, r_x)$ for all $t\geq 0$ and
%	\[
%	\displaystyle\int_{\R} \abs{u(t,r)- \mathcal{M}_{\mathcal{H}}\ \Gamma(t, r+(d-1)t)} e^{(d-1)r} \, \d r \lesssim \sqrt{H(u_0)} \cdot t^{-\frac{1}{2}}
%	\]
	\[
	\|u(t,r)- \mathcal{M}_{\mathcal{H}}\ \Gamma(t, r+(d-1)t) \|_{L^1(\R, \d\nu)} \lesssim \sqrt{H(u_0)} \cdot t^{-\frac{1}{2}}
	\]
	where $\Gamma(t,r)$ stands for the Gaussian profile
	\[
	\Gamma(t,r)=\frac{1}{\sqrt{t}}e^{-\frac{r_x^2}{4t}}
	\]
	and $\mathcal{M}_{\mathcal{H}} = \|u_0\|_{L^1(\mathbb{R}; \d\nu)}$. Moreover,
	\[
	\|u(t,r)- \mathcal{M}_{\mathcal{H}}\ \Gamma(t, r+(d-1)t) \|_{L^\infty(\R)} \lesssim \sqrt{H(u_0)} \cdot t^{-1}.
	\]
\end{thm}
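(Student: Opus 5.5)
The plan is to reduce the problem to a one-dimensional drift-diffusion equation on $\R$ and then run the entropy method for it. For a horospheric datum, uniqueness of the semigroup solution together with invariance of $\Delta_{\H^d}$ under parabolic isometries shows that $u(t,x)=u(t,r_x)$. By \eqref{laplacianParabolic}, $u$ then solves
\[
u_t=u_{rr}+(d-1)u_r .
\]
Set $w(t,r)=e^{(d-1)r}u(t,r)$, so that $\int_\R w\,\d r=\mathcal{M}_{\mathcal{H}}$ is the $L^1(\d\nu)$ mass. A direct computation gives
\[
w_t=w_{rr}-(d-1)w_r ,
\]
which is a pure transport at speed $d-1$ plus one-dimensional diffusion. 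In the moving frame $s=r+(d-1)t$ this becomes the standard heat equation on $\R$. Note that the profile $\Gamma(t,r+(d-1)t)$ in the statement corresponds to $u$ weighted by the Jacobian, so I read the claim as $w$ being close to $\mathcal{M}_{\mathcal{H}}\Gamma$ in $L^1(\d r)$, up to normalisation of $\Gamma$. This is consistent with $L^1(\d\nu)$ for $u$ being exactly $L^1(\d r)$ for $w$.

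Next I carry out the self-similar change of variables from the introduction, adapted to this setting:
\[
t=e^\tau-1,\qquad s=e^{\tau/2}\rho,\qquad v(\tau,\rho)=e^{\tau/2}w(t,s).
\]
Then $v$ satisfies the Fokker--Planck equation
\[
v_\tau=v_{\rho\rho}+\tfrac12(\rho v)_\rho ,
\]
whose stationary state is the Gaussian $\mathcal{F}=\mathcal{M}_{\mathcal{H}}(4\pi)^{-1/2}e^{-\rho^2/4}$. For the relative entropy
\[
H(\tau)=\int_\R v\ln\!\left(\frac{v}{\mathcal{F}}\right)\d\rho
\]
I would compute $H'=-I$, where $I$ is the relative Fisher information. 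The Gaussian logarithmic Sobolev inequality then gives $I\ge H$, which yields $H(\tau)\le H(0)e^{-\tau}$. The Csiszár--Kullback inequality gives
\[
\|v-\mathcal{F}\|_{L^1}\le \sqrt{2H(0)/\mathcal{M}_{\mathcal{H}}}\,e^{-\tau/2}=C\sqrt{H(u_0)}\,(1+t)^{-1/2}.
\]
Undoing the change of variables gives the $L^1(\d\nu)$ bound. Here there is no transient profile and no moving boundary $\rho_0(\tau)$, since the frame shift is exact; this is the simplest case of the general framework. One technical point is that the rescaling $s=e^{\tau/2}\rho$ combined with $t=e^\tau-1$ produces the Gaussian of variance $2(t+1)$ rather than $2t$. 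This must be matched with $\Gamma(t,\cdot)$ via the elementary estimate $\|\Gamma(t+1,\cdot)-\Gamma(t,\cdot)\|_{L^1}\lesssim t^{-1}$, which is absorbed into the $t^{-1/2}$ rate. Finiteness of $H(0)$ needs $u_0\ln u_0$ and second-moment-type integrability, which I take as part of the assumption $H(u_0)<\infty$.

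For the $L^\infty$ statement I use the one-dimensional $L^1$--$L^\infty$ smoothing of the heat equation for the difference $w-\mathcal{M}_{\mathcal{H}}\Gamma$, which solves the heat equation in the moving frame. Write
\[
w(t)-\mathcal{M}_{\mathcal{H}}\Gamma(t)=e^{(t/2)\partial_{ss}}\bigl(w(t/2)-\mathcal{M}_{\mathcal{H}}\Gamma(t/2)\bigr).
\]
Then
\[
\|\cdot\|_{L^\infty}\lesssim t^{-1/2}\,\|w(t/2)-\mathcal{M}_{\mathcal{H}}\Gamma(t/2)\|_{L^1}\lesssim\sqrt{H(u_0)}\,t^{-1},
\]
where $\Gamma$ is taken with the appropriate shift in time so that it is an exact heat solution. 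I expect the main obstacle to be bookkeeping rather than analysis. One must carefully justify the entropy dissipation identity, with decay at $\pm\infty$ of $v$ and its derivatives, which can be done by approximation with smooth, fast-decaying data. One must also track which weight (with or without $e^{(d-1)r}$) the $L^\infty$ norm in the statement refers to, because the bound obtained naturally applies to $w$.
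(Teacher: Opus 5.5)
You follow the paper's route for the $L^1$ bound: remove the drift, pass to self-similar variables, and close with the Gaussian log-Sobolev inequality, Gronwall and Csisz\'ar--Kullback. But your frame has the wrong sign, and as written the step ``$v$ satisfies the Fokker--Planck equation'' fails. From $w_t=w_{rr}-(d-1)w_r$, the weighted density $w$ is transported to the \emph{right}. So $w$ solves the heat equation in the frame $s=r-(d-1)t$, which is the paper's choice $r=(d-1)e^\tau+e^{\tau/2}\rho$ up to the constant $d-1$. With your $s=r+(d-1)t$ one gets $W_t=W_{ss}-2(d-1)W_s$, and after rescaling
\[
v_\tau=v_{\rho\rho}+\tfrac12(\rho v)_\rho-2(d-1)e^{\tau/2}v_\rho .
\]
This has no stationary Gaussian (its zero-flux profile is centred at $\rho=4(d-1)e^{\tau/2}$), so $H'=-I$ is lost. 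The same slip appears in your reading of the statement. $\Gamma(t,r+(d-1)t)$, centred at $r=-(d-1)t$, is the profile of $u$ itself, not of $w$. What connects the entropy bound to the statement is the completed-square identity
\[
e^{(d-1)r}\,\Gamma(t,r+(d-1)t)=\Gamma(t,r-(d-1)t),
\]
which gives $\|u-\mathcal{M}_{\mathcal{H}}\Gamma(t,\cdot+(d-1)t)\|_{L^1(\d\nu)}=\|w-\mathcal{M}_{\mathcal{H}}\Gamma(t,\cdot-(d-1)t)\|_{L^1(\d r)}$. With these corrections the $L^1$ part goes through. Two smaller points:
\begin{itemize}
\item For densities of mass $\mathcal{M}_{\mathcal{H}}$, Csisz\'ar--Kullback gives $\sqrt{2\mathcal{M}_{\mathcal{H}}H}$, not $\sqrt{2H/\mathcal{M}_{\mathcal{H}}}$.
\item Replacing $\Gamma(t+1,\cdot)$ by $\Gamma(t,\cdot)$ costs $O(\mathcal{M}_{\mathcal{H}}t^{-1})$. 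This is not proportional to $\sqrt{H(u_0)}$, so it is absorbed only for $t\ge t_0(u_0)$, as in the paper's radial corollary.
\end{itemize}

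For the $L^\infty$ part, your semigroup argument for $w$ in the corrected comoving frame is the paper's argument conjugated by $e^{(d-1)r}$. The paper applies the kernel of the $u$-equation to $u(t)-\mathcal{M}_{\mathcal{H}}\Gamma$ and uses $\Gamma(t,r-R+(d-1)t)\,e^{-(d-1)R}\lesssim t^{-1/2}$. By the identity above, this quantity equals $e^{-(d-1)r}\,\Gamma(t,r-R-(d-1)t)$. Its supremum over $R$ is $t^{-1/2}e^{-(d-1)r}$, so the paper's bound is uniform only for $r$ bounded below. Both routes therefore prove exactly
\[
\sup_{r\in\R} e^{(d-1)r}\bigl|u(t,r)-\mathcal{M}_{\mathcal{H}}\Gamma(t,r+(d-1)t)\bigr|\lesssim \sqrt{H(u_0)}\,t^{-1}
\]
for large $t$. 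Equivalently, they give the unweighted bound on $\{r\ge -K\}$ with constant $e^{(d-1)K}$.

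Your hesitation about the weight is justified: the unweighted bound on all of $\R$ is false in general. Take $u_0(r)=e^{-(d-1)r/2}$ for $r<0$ and $u_0=0$ for $r\ge 0$. Then $\mathcal{M}_{\mathcal{H}}=2/(d-1)$ and the initial entropy is finite. Yet $u(t,r)=e^{-(d-1)r/2-\lambda_1 t}(1+o(1))$ as $r\to-\infty$, so $u(t,\cdot)$ is unbounded for every $t$. So state and prove the weighted estimate; that is what your argument, and in fact the paper's, actually delivers.
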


However, these results leave behind many physically relevant cases where the initial datum does not have a particular symmetry to exploit. The main result of this article deals with this case. From the two coordinate systems presented in this article, the one best suited to describe solutions whose initial datum satisfies only $u_0\in L^1(\H^d)$ is the polar coordinates system, where $x=(r_x, \theta_x)\in\H^d$ with $r_x\geq 0$ and $\theta_x\in \S^{d-1}$. Naturally, in this setting, $u_0\in L^1(\H^d)$ means $u_0\in L^1(\R^+\times\S^{d-1}; \d\mu)$ with $\mu(r, \theta) = \sinh^{d-1}(r)$ 

First, we need to introduce the ``memory'' function $\Phi(\theta_x)$ depending only on the initial datum $u_0$. The reader can think of it as the amount of mass of $u_0$ in the direction $\theta_x$ weighted accordingly to the function
\[
\varphi(y,\theta_x)=[\cosh(r_y)-\sinh(r_y)\cdot\left\langle\theta_x,\theta_y\right\rangle]^{-(d-1)},
\]
so
\[
\Phi(\theta_x) = \int_{\H^d} u_0(y)\, \varphi(y, \theta_x)\ \d\mu(y).
\]
For this memory function, we assume $\Phi(\theta_x)\in L^1(\S^{d-1})$. This assumption is enough to obtain convergence, see Theorem~\ref{thm:convergence_without_rate}. Notice that $\Phi$ matches $\Phi_1$ in~\eqref{eq:Phi_p_papagoergiou}, as it should. We arrive however at this quantity independently from~\cite{Papageorgiou2025}.

Second, in order to improve Theorem~\ref{thm:convergence_without_rate} by means of a speed of convergence, the initial datum has to satisfy a certain growth condition, namely
\begin{equation}\label{eq:growth_condition_introduction}
\sup\limits_{\theta_x\in\S^{d-1}}\left\{\int_C^\infty u_0(\theta_x, r)\sinh^{d-1+\varepsilon}(r)\, \d r\right\} <\infty
\end{equation}
for some fixed positive constants $C,\varepsilon$. This can be thought of as a sort of ``momentum'' of order $\varepsilon$; of course, since $u_0\in L^1(\H^d)$ one must think about $\varepsilon << 1$. The specifics can be found in Lemma~\ref{lem:growth_condition_u_0}. Notice that if $u_0$ is compactly supported then condition~\eqref{eq:growth_condition_introduction} is trivially satisfied.

With this at hand, we can write down the main result of this article. We define for now $H^*(u_0)$ as a value depending only on the initial datum, the initial entropy $H(u_0)$ and the dimension $d$ such that
\[
H(u_0)<H^*(u_0),\quad\text{and}\quad H^*(u_0)<\infty\  \text{ if }\  H(u_0)<\infty.
\]
As a consequence of Theorem~\ref{thm:integrable_L^1_Lînfty_convergence} in Section~\ref{sect:convergence_with_rates} we obtain the following.

\begin{thm}[$L^1$ initial data]
	Let the polar coordinates be such that a point $x\in\H^d$ is given by $x=(r_x, \theta_x)$, where $r_x\geq 0$ represents the hyperbolic distance between $x$ and the pole $\mathcal{O}$ and $\theta_x\in\S^{d-1}$ represents the direction of the unit vector from $\mathcal{O}$ to $x$. Let $u_0\in L^1(\H^d)$ such that $u_0\geq 0$, $\Phi(\theta_x)\in L^1(\S^{d-1})$. Then,
	\[
	\| u(t,x) - C_d\ \Phi(\theta_x)\ \Gamma(t, r_x+(d-1)t) \|_{L^1(\mathbb{H}^d)}\to 0\quad\text{as}\quad t\to \infty,
	\]
	where
	\[
	C_d=\frac{2^{d-2}}{\sqrt{\pi}\cdot |\S^{d-1}|},
	\]
	or, in terms of the heat kernel,
	\[
	\| u(t,\cdot) - \Phi(\cdot)\ P_d(t,\cdot) \|_{L^1(\mathbb{H}^d)}=\| u(t,x) - \Phi(\theta_x)\ G_d(t,r_x) \|_{L^1(\mathbb{H}^d)}\to 0\text{ as } t\to \infty.
	\]
	If, in addition, condition~\eqref{eq:growth_condition_introduction} is satisfied, then there exists a time $t_0$ such that, for all $t\geq t_0$,
	\[
	\| u(t,x) - C_d\ \Phi(\theta_x)\ \Gamma(t, r_x+(d-1)t) \|_{L^1(\mathbb{H}^d)}  \lesssim \sqrt{H^*(u_0)}\cdot t^{-\frac{1}{2}}.
	\]
	Equivalently, in terms of the heat kernel,
	\[
	\begin{aligned}
	\| u(t,\cdot) - \Phi(\cdot)\ P_d(t,\cdot) \|_{L^1(\mathbb{H}^d)}&=\| u(t,x) - \Phi(\theta_x)\ G_d(t,r_x) \|_{L^1(\mathbb{H}^d)}\\[10pt]
	&\lesssim \sqrt{H^*(u_0)}\cdot t^{-\frac{1}{2}}\quad \text{for all}\quad t\geq t_0.
	\end{aligned}
	\]
\end{thm}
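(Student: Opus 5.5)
The plan has three stages: handle the qualitative convergence by comparing heat kernels and using linearity and density; then obtain the rate with a relative entropy built around a transient profile; and finally replace the profile by the explicit Gaussian and by the heat kernel.

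\textbf{Qualitative convergence.} Write $u(t,x)=\int u_0(y)G_d(t,\lambda(x,y))\,\d\mu(y)$. For fixed $y$ and $x=(r_x,\theta_x)$ in the critical annulus $r_x=(d-1)t+O(\sqrt t)$, formula~\eqref{eq:formula_lambda} gives, as $r_x\to\infty$,
\[
\lambda(x,y)=r_x+\ln\bigl(\cosh r_y-\sinh r_y\langle\theta_x,\theta_y\rangle\bigr)+o(1).
\]
Combined with the kernel asymptotics $G_d\sim\gamma\, t^{-3/2} r\, e^{-(r+(d-1)t)^2/4t}$, this yields
\[
\frac{G_d(t,\lambda(x,y))}{G_d(t,r_x)}\to\varphi(y,\theta_x)
\]
locally uniformly in the annulus. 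Outside the annulus both $u$ and $\Phi P_d$ carry vanishing $L^1$ mass, by the two-sided bound~\eqref{eq:bound_heat_kernel_in_terms_of_h}. Hence for compactly supported $u_0$, dominated convergence gives $\|u-\Phi P_d\|_{L^1}\to0$. For general $u_0$ with $\Phi\in L^1(\S^{d-1})$, truncate $u_0=u_0\1_{B_R}+u_0\1_{B_R^c}$. Both the $L^1$ contraction of the semigroup and the quantity $\int_{\S^{d-1}}\Phi_{\text{tail}}(\theta)\,\d\theta\,\|G_d(t,\cdot)\|_{L^1(\R^+;\sinh^{d-1}r\,\d r)}$ are small uniformly in $t$. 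The equivalence with $C_d\Phi\,\Gamma(t,r_x+(d-1)t)$ follows from computing $\sinh^{d-1}(r)G_d(t,r)$ on the annulus: it is asymptotically Gaussian in $r-(d-1)t$ with total mass $|\S^{d-1}|^{-1}$, which fixes $C_d$.

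\textbf{Rate via entropy.} I will pass to the self-similar variables $t=e^\tau-1$, $r=(d-1)e^\tau+e^{\tau/2}\rho$, and set $v=e^{\tau/2}\sinh^{d-1}(r)u$. The transient profile is $\mathcal F(\tau,\rho,\theta)=e^{\tau/2}\sinh^{d-1}(r)\,\Phi_\tau(\theta)\,G_d(t,r)$, where $\Phi_\tau$ is chosen as the entropy minimizer. Concretely, $\Phi_\tau(\theta)$ is the ratio of the angular mass of $v$ to that of the radial kernel profile, so that $\mathcal F$ has the same mass as $v$ on every ray. Differentiating $H(\tau)=\int\!\!\int v\ln(v/\mathcal F)$ along the equation for $v$ produces three contributions:
\begin{itemize}
\item a relative Fisher information in $\rho$, which is controlled below by $H$ through a Gaussian log-Sobolev inequality in $\rho$ (the radial part of the operator, after conjugation, is a perturbation of the Ornstein--Uhlenbeck operator);
\item an angular term from $e^{-2r}\Delta_{\S^{d-1}}$, which is nonpositive or exponentially small in $\tau$ because of the $\sinh^{-2}(r)$ factor on the annulus;
\item error terms from the time dependence of $\Phi_\tau$, from the moving boundary $\rho_0(\tau)$, and from $\coth r-1$, which are $O(e^{-c\tau})$.
\end{itemize}
Together these give $H'\le-\lambda H+Ce^{-c\tau}$, hence $H(\tau)\le(C+H(0))e^{-\tau}$ after tuning constants. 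Csiszár--Kullback then gives $\|v-\mathcal F\|_{L^1}\lesssim\sqrt{H^*}e^{-\tau/2}$, which is $t^{-1/2}$.

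\textbf{Replacing $\Phi_\tau$ and the main obstacle.} It remains to show $\|\Phi_\tau-\Phi\|_{L^1(\S^{d-1})}\lesssim t^{-1/2}$, and also that $\Gamma$ and $G_d$ are interchangeable at that rate. This is where the growth condition~\eqref{eq:growth_condition_introduction} enters. The convergence of the kernel ratio to $\varphi(y,\theta)$ has error of order
\[
O\!\left(\frac{1+r_y}{\sqrt t}+e^{-2(r_x-r_y)}\right),
\]
and the $\varepsilon$-moment in every direction makes the contribution from large $r_y$ integrable with a polynomial rate. Controlling this uniformly in $\theta$, and extracting exactly the $t^{-1/2}$ rate rather than a weaker $t^{-\varepsilon/2}$, is the step I expect to be hardest. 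I would first try splitting at $r_y\sim\delta\ln t$: below the split the Taylor expansion of $\lambda$ applies, and above it the moment bound does.
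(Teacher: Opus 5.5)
\textbf{There is a genuine gap, and it is exactly the step you flag as the main obstacle.} Your first stage is a reasonable version of the exercise the paper leaves for Theorem~\ref{thm:convergence_without_rate}. Your entropy functional is also structurally the paper's one: your $\Phi_\tau$ is $|\S^{d-1}|\,\mathcal{N}$. But the rate hinges entirely on the directional mass, and you control it neither inside the entropy inequality nor in the final replacement.

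In $\partial_\tau H$, integrating the angular term by parts on the sphere disposes of the $\ln v$ part. What remains is precisely $-\int_{\S^{d-1}}\mathcal{N}_\tau\ln\mathcal{N}$. This is not small just because $e^{\tau}\sinh^{-2}(r)$ is small on the annulus, since that factor blows up near $\rho_0(\tau)$. To reach $H\lesssim e^{-\tau}$, hence $t^{-1/2}$, you need:
\begin{itemize}
\item $\mathcal{N}_\tau$ to decay integrably faster than $e^{-\tau}$;
\item the lower bound $\mathcal{N}\geq C_1>0$ of Proposition~\ref{lem:bounds_N}, to control $\ln\mathcal{N}$.
\end{itemize}
A bound $H'\le-\lambda H+Ce^{-c\tau}$ gives at best $e^{-\min(\lambda,c)\tau}$ (up to a factor $\tau$), and no tuning of constants improves this.

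Your third stage cannot supply the missing control. A bound on $\|\Phi_\tau-\Phi\|_{L^1(\S^{d-1})}$ is not a bound on $\partial_\tau\Phi_\tau$. Moreover, the splitting at $r_y\sim\delta\ln t$ with only an $\varepsilon$-moment leaves a tail of size $t^{-\varepsilon\delta}$. Pushing that below $t^{-1/2}$ forces $\delta\ge 1/(2\varepsilon)$, and then the bulk error $(1+r_y)/\sqrt t$ costs an extra $\ln t$. On top of that, you would need a quantitative, $\theta$-uniform version of the kernel-ratio limit, which you have not established.

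\textbf{The missing idea is to decouple the rate from the identification of the limit.}
\begin{enumerate}
\item The paper differentiates the directional mass directly: $\partial_\tau\mathcal{N}=e^\tau\int_0^\infty\sinh^{d-3}(r)\,\Delta_{\S^{d-1}}u\,\d r$.
\item Via the recurrence \eqref{eq:recurrence_derivative_kernels}, it writes $\Delta_{\S^{d-1},\theta_x}G_d(t,\lambda(x,y))$ as a combination of $G_{d+2}$ and $G_{d+4}$ with explicit prefactors.
\item It bounds these using \eqref{eq:bound_heat_kernel_in_terms_of_h}, the lower bounds \eqref{eq:inferior_bound_lambda_positive_cosine}--\eqref{eq:inferior_bound_lambda_negative_cosine} on $\lambda(x,y)$, the integrability of $|\sin(\vartheta/2)|^{s}$ on the sphere, and the growth condition.
\item The faster decay of the higher-dimensional kernels beats the exponential prefactors, giving $|\partial_\tau\mathcal{N}|\lesssim e^{-\delta_d e^\tau}$, i.e.\ exponentially small in $t$. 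This makes the error term in $\partial_\tau H$ harmless and gives $\mathcal{N}\to\mathcal{N}_\infty$ with an exponentially small error.
\item The limit is then identified with no rate at all. The quantity $\|\Phi-|\S^{d-1}|\,\mathcal{N}_\infty\|_{L^1(\S^{d-1})}$ is independent of $t$. It is bounded by the entropy estimate plus $\|u-\Phi G_d\|_{L^1}$, which tends to zero by Theorem~\ref{thm:convergence_without_rate}, so it vanishes.
\end{enumerate}

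\textbf{A secondary point concerns the log-Sobolev constant.} Take the radial reference to be the explicit profile $V=C(\tau)\sinh^{d-1}(r)e^{-(\rho+2(d-1)e^{\tau/2})^2/4}$. For this $V$, Bakry--Émery gives log-Sobolev constant exactly $1$ (Corollary~\ref{cor:log_sobolev}). With $\sinh^{d-1}G_d$ as reference instead, Holley--Stroock applied to the two-sided bound \eqref{eq:bound_heat_kernel_in_terms_of_h} only yields a constant strictly below $1$, and hence a rate worse than $t^{-1/2}$. The exchange of $\Gamma$ and $G_d$ is then done at the very end using Theorem~\ref{thm:L^1_radially_symmetric_asymptotic}.
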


\subsection{Organisation of the article}

Section~\ref{sect:convergence_symmetries} is devoted to the study of the asymptotic behaviour of solutions presenting a certain symmetry, either radial (Section~\ref{sect:elliptic}) or horospheric (Section~\ref{sect:horospheric}); it can be regarded as the simpler cases, where the entropy estimates are introduced. Section~\ref{sect:convergence_integrable_no_rates} deals with the general case where $u_0$ is nonnegative and integrable, introducing the ``memory function'' $\Phi(\theta)$ and finding the asymptotic profile of the solutions but without a rate of convergence. It is in Section~\ref{sect:convergence_integrable_with_rates} where we obtain such rates via entropy estimates and a deep study of the directional mass distribution of the solution (Section~\ref{sect:directional_mass_distribution}). 

In the Appendix we collect several computations and auxiliary results that, for the sake of clarity, have been separated from the main text. Section~\ref{sect:construction_coordinates} is a brief introduction to the derivation of coordinates systems for Riemannian manifolds for the interested readers. Section~\ref{sect:sobolev} is an addendum in which we present the auxiliary one-dimensional logarithmic Sobolev inequalities needed in this article to obtain the rates of convergence of the entropies. Section~\ref{sec:UsefulBounds} is devoted to a useful bound for the radially symmetric equilibrium, and Section~\ref{sect:ExplicitVarPhi} provides, as an example, the computations on how to compute function \( \varphi \), defined in Section~\ref{sect:convergence_integrable_no_rates}, in the case $d\in\{2,3\}$ just with some analysis.

%Finally, in Section~\ref{sec:L1Data} we present a generalization of the results from Section~\ref{sect:convergence_integrable_with_rates} to the case of non-compactly supported initial data.

\section{Convergence of symmetric solutions via entropy estimates}\label{sect:convergence_symmetries}
In this section we deal with solutions to \eqref{eq:heatEquationHyperbolic} that present one of the symmetries mentioned in the introduction. The results presented here differ mainly from those contained in \cite{vazquez1992asymptotic} and \cite{APZ} in the fact that here we obtain convergence results with speed of convergence.

\subsection{Radially symmetric solutions. }\label{sect:elliptic}

Given the operator~\eqref{eq:laplace_beltrami_polar_coordinates}, if we consider initial data that is radially symmetric then the solution stays radially symmetric, see \cite{vazquez1992asymptotic}, and we can omit the spherical part, writing with a small abuse of notation
\[
u(t, x)= u(t,r)\quad\text{for any}\quad \theta \in\mathbb{S}^{d-1},\quad \Delta_{\H^d} u = \frac{1}{\sinh^{d-1}(r)}\partial_r\left(\sinh^{d-1}(r)\cdot \partial_r u\right).
\]
Equation~\eqref{eq:heatEquationHyperbolic} then becomes one-dimensional and reads
\begin{equation}\label{eq:heatEquation_radial_solutions}
	\begin{cases}
		u_t = \frac{1}{\sinh^{d-1}(r)}\partial_r\left(\sinh^{d-1}(r)\cdot \partial_r u\right), \quad &r>0, t>0,\\
		u(0,r)=u_0(r), & r>0.
	\end{cases}
\end{equation}
Since the mass is preserved, let us define then, for the rest of this section,
\[
\mathcal{M}_{\mathcal{R}}:=\int_0^\infty u_0(r)\sinh^{d-1}(r)\ \d r,
\]
implying that
\[
\|u_0\|_{L^1(\mathbb{H}^d)}=|\S^{d-1}|\cdot \mathcal{M}_{\mathcal{R}}
\]
Now, since we know that the bulk of mass of the solution is contained within a $\sqrt{t}$-neighborhood of the geodesic sphere $\{r= (d-1)t\}$ (see \cite{vazquez1992asymptotic}), it make sense to consider the change of variables
\[
v(\tau, \rho):=e^\frac{\tau}{2}\cdot\sinh^{d-1}\big(r(\tau,\rho)\big)\cdot u\big(t(\tau), r(\tau,\rho)\big),
\]
where
\[
t(\tau)=e^\tau-1,\quad r(\tau,\rho)= (d-1)e^\tau  + e^\frac{\tau}{2}\rho.
\]
In this way, we locate our coordinates around the mass of our solution $u$ to the heat equation. The factor $\sinh^{d-1}\big(r(\tau,\rho)\big)$ is included to work with the function in $L^1(\mathbb{R},dr)$, instead of $L^1(\mathbb{R},\mu (r)dr)$.

Since $r\geq 0$ we must have
\[%\begin{equation}\label{eq:def_rho_0_tau}
\rho\geq \rho_0(\tau):= -(d-1)e^\frac{\tau}{2}
%\end{equation}
\]
and thus
\[
v(\tau, \rho_0(\tau)) = e^\frac{\tau}{2}\cdot\sinh^{d-1}\big(\underbrace{r(\tau,\rho_0(\tau)}_{=0})\big)\cdot u\big(t(\tau), \underbrace{r(\tau,\rho_0(\tau)}_{=0}\big) = 0
\]
This function $v$ also preserves the mass in its domain of definition, i.e.,
\[
\int_{\rho_0(\tau)}^\infty v(\tau, \rho)\ \d \rho = \int_0^\infty v_0(\rho)\ \d \rho = \int_0^\infty u_0(r)\sinh^{d-1}(r)\ \d r = \mathcal{M}_{R}\quad\text{for all}\quad \tau\geq 0.
\]
The previous definitions provide
\begin{equation}\label{derivatives}
r_\tau = (d-1)e^\tau  + \frac{1}{2}e^\frac{\tau}{2}\rho,\quad r_\rho = e^\frac{\tau}{2}, \quad \partial_r = e^{-\frac{\tau}{2}}\cdot \partial_\rho,
\end{equation}
and, as a consequence of the last relation,
\[
u_r = e^{-\frac{\tau}{2}}u_\rho.
\]
With the help of~\eqref{derivatives}, we show that $v$ solves the equation
%\[
%\begin{aligned}
%v_\tau &= \frac{1}{2}v + (d-1)e^\frac{\tau}{2}\ \sinh^{d-2}(r)\cosh(r)\ r_\tau\ u +e^\frac{3\tau}{2}\ \sinh^{d-1}(r)\ u_t + e^\frac{\tau}{2}\ \sinh^{d-1}(r)\ r_\tau\ u_r\\
%&= \frac{1}{2}v + (d-1)\coth(r)\ r_\tau\ v +e^\frac{3\tau}{2}\ \partial_r\left(\sinh^{d-1}(r)\cdot e^{-\frac{\tau}{2}}u_\rho\right) + e^{\frac{\tau}{2}}\sinh^{d-1}(r)\ r_\tau\ e^{-\frac{\tau}{2}} u_\rho\\
%&= \frac{1}{2}v + (d-1)\coth(r)\ r_\tau\ v +e^\frac{\tau}{2}\ \partial_\rho\left(\sinh^{d-1}(r)\cdot \partial_\rho\left[\frac{v}{e^\frac{\tau}{2}\sinh^{d-1}(r)}\right]\right) \\
%&\quad + \sinh^{d-1}(r)\ r_\tau\  \partial_\rho\left[\frac{v}{e^\frac{\tau}{2}\sinh^{d-1}(r)}\right]\\
%&= \frac{1}{2}v + (d-1)\coth(r)\ r_\tau\ v +\partial_\rho\left(\sinh^{d-1}(r)\cdot \partial_\rho\left[\frac{v}{\sinh^{d-1}(r)}\right]\right) \\
%&\quad +  e^{-\frac{\tau}{2}}\ r_\tau\ v_\rho  - (d-1) \coth(r)\ r_\tau\  v\\
%&= \frac{1}{2}v +\partial_\rho\left(v_\rho - (d-1)\coth(r)\ r_\rho v\right) +  e^{-\frac{\tau}{2}}\ r_\tau\ v_\rho\\
%&= \partial_\rho\left(v_\rho - (d-1)\coth(r)\ r_\rho v\right)+\frac{1}{2}v  +  \left((d-1)e^{\frac{\tau}{2}}+\frac{1}{2}\rho\right)\ v_\rho\\
%&= \partial_\rho\left(v_\rho + \left[(d-1)e^{\frac{\tau}{2}}+\frac{1}{2}\rho- (d-1)e^{\frac{\tau}{2}}\coth(r)\right]  v\right).
%\end{aligned}
%\]
%Summarizing, we have
\begin{equation}\label{eq:main_spherical_coordinates_symmetric_datum}
\begin{cases}
	v_\tau = \partial_\rho\left(v_\rho + \left[(d-1)e^{\frac{\tau}{2}}+\frac{1}{2}\rho- (d-1)e^{\frac{\tau}{2}}\coth(r)\right]  v\right),\quad &\rho\geq \rho_0(\tau),\ \tau>0,\\
	v(\tau, \rho_0(\tau))=0, &\tau>0,\\
	v(0,\cdot) = v_0, &\rho\geq \rho_0(\tau).
\end{cases}
\end{equation}
We can calculate a transient equilibrium $V$ of the previous equation by solving the ODE
\[
V_\rho + \left[(d-1)e^{\frac{\tau}{2}}+\frac{1}{2}\rho- (d-1)e^{\frac{\tau}{2}}\coth(r)\right]  V = 0
\]
leading, for an arbitrary function $C(\tau)$, to
\begin{equation}\label{eq:def_V}
V(\tau, \rho) = C(\tau)\cdot \sinh^{d-1}\big(r(\tau, \rho)\big)\cdot e^{- \frac{\left(\rho + 2(d-1)e^\frac{\tau}{2}\right)^2}{4}}.
\end{equation}
Then, recalling that $r=r(\tau,\rho)$ and $t=t(\tau)= e^\tau-1$, we get in the original coordinates the function
\[
V(t,r) = C(t)\cdot \sinh^{d-1}(r)\cdot e^{- \frac{\left(r + (d-1)(t+1)\right)^2}{4(t+1)}}.
\]
Note that this coincides with the equilibrium considered by Vázquez in \cite[Theorem 5.1]{vazquez1992asymptotic}.

Up to this point, $C(t)$ was arbitrary. Now we choose it in such a way that $V(t,r)$ has the same mass as $u(t,r)$ at each time, so we need to ask
\begin{equation}\label{eq_definC}
\mathcal{M}_{\mathcal{R}} = \int_{\rho_0(\tau)}^\infty V(\tau, \rho)\ \d \rho\quad\text{for all}\quad \tau>0.
\end{equation}
Note that although $C(t)$ defined in this way may not be constant. We can obtain the following estimates, whose proof is given in Appendix \ref{sec:UsefulBounds}.
	\begin{equation}\label{eq:bound_quotient_C(t)}
	-k_d\  e^{-m_d (t+1)}\leq \frac{\partial_t C(t)}{C(t)}\leq k_d\  e^{-m_d (t+1)},
\end{equation}
where
\[
m_d := \min \left(d-1, \frac{(d-1)^2}{16}\right),
\]
and
\begin{equation}\label{eq:limit_C(t)}
	\frac{2^{d-2}\mathcal{M}_{\mathcal{R}}}{\sqrt{\pi} }\leq C(t)\leq \frac{2^{d-2}\mathcal{M}_{\mathcal{R}}}{(1-e^{-(d-1)(t+1)})^{d-1}\left[\sqrt{\pi} - \frac{1}{\sqrt{\lambda_1 (t+1)}}e^{-\frac{\lambda_1 (t+1)}{4}}\right]}.
\end{equation}
Joining everything together we can write problem~\eqref{eq:main_spherical_coordinates_symmetric_datum} as
\[
%\begin{equation}\label{eq:main_spherical_coordinates_symmetric_datum_logarithmic_term}
	\begin{cases}
		v_\tau = \partial_\rho\left(v\cdot\partial_\rho\left[\ln\left(\frac{v}{V}\right)\right]\right),\quad &\rho\geq \rho_0(\tau),\ \tau>0,\\
		v(\tau, \rho_0(\tau))=0, &\tau>0,\\
		v(0,\cdot) = v_0, &\rho\geq \rho_0(\tau).
	\end{cases}
%\end{equation}
\]
with $V$ defined as in~\eqref{eq:def_V}. This leads to the definition of the entropy $H$ of the solution $v$ of the previous problem as
\[
%\begin{equation}\label{eq:def_entropy_radially_symmetryc}
H(\tau):= \int_{\rho_0(\tau)}^\infty v\cdot\ln\left(\frac{v}{V}\right)\ \d \rho.
%\end{equation}
\]
The next step is to obtain an estimate for the time derivative of $H(\tau)$. We can compute
\[
\partial_\tau H(\tau) = -\rho_0'(\tau)\left[v\ln\left(\frac{v}{V}\right)\right]_{\rho=\rho_0(\tau)} +\int_{\rho_0(\tau)}^\infty v_\tau - \int_{\rho_0(\tau)}^\infty v\cdot\frac{V_\tau}{V} + \int_{\rho_0(\tau)}^\infty v_\tau \cdot\ln\left(\frac{v}{V}\right).
\]
Let us consider each term by separate. First, define
\[
\gamma_0(\tau):=\lim\limits_{\rho\to\rho_0(\tau)} \frac{v}{V}= \lim\limits_{\rho\to\rho_0(\tau)}\left[ C(\tau)e^\frac{\tau}{2}u(t(\tau),r(\tau, \rho))e^{\frac{\left(\rho+2(d-1)e^\frac{\tau}{2}\right)^2}{4}}\right]= C(t)e^\frac{\tau}{2}u(t(\tau),0)e^{\lambda_1 e^\tau}.
\]
Clearly, since $u(t,0)$ is a positive finite quantity for every $t> 0$, we must have that $0<\gamma_0(\tau)<\infty$, and therefore
\[
\left[v\ln\left(\frac{v}{V}\right)\right]_{\rho=\rho_0(\tau)} = \left[\frac{v}{V}\cdot\ln\left(\frac{v}{V}\right)\cdot V\right]_{\rho=\rho_0(\tau)}=\gamma_0(\tau)\cdot \ln(\gamma_0(\tau))\cdot \underbrace{V(\tau, \rho_0(\tau))}_{=0}= 0.
\]

Next, the conservation of mass yields
\[
0=\partial_\tau \int_{\rho_0(\tau)}^\infty v\ \d\rho = \int_{\rho_0(\tau)}^\infty v_\tau\ \d\rho -\rho_0'(\tau)\underbrace{v(\tau, \rho_0(\tau))}_{=0} = \int_{\rho_0(\tau)}^\infty v_\tau \ \d\rho
\]
and therefore
\[
\int_{\rho_0(\tau)}^\infty v_\tau \ \d\rho =0.
\]
Finally, we can compute, using \eqref{eq:bound_quotient_C(t)},
\[
\frac{V_\tau}{V}=\frac{\partial_\tau C(t(\tau))}{C(t(\tau))} + \underbrace{(d-1)\left((d-1)e^\tau + \frac{1}{2}e^\frac{\tau}{2}\rho\right)(\coth(r(\tau,\rho))-1)}_{\geq 0}\geq -k_d e^{\tau-m_d e^\tau}.
\]
for all $\rho\geq \rho_0(\tau)$.

All of this together yields
\[
\partial_\tau H(\tau) \leq k_d\ e^{\tau-m_d e^\tau} \mathcal{M}_{\mathcal{R}}  - \int_{\rho_0(\tau)}^\infty v \cdot\left|\partial_\rho\left[\ln\left(\frac{v}{V}\right)\right]\right|^2,
\]
implying, by the log-Sobolev inequality that can be found in Corollary~\ref{cor:log_sobolev}, that
\[
\partial_\tau H(\tau) \leq k_d\ e^{\tau-m_d e^\tau}\mathcal{M}_{\mathcal{R}}  - H(\tau).
\]
Thus, we obtain, thanks to Gronwall's Lemma,
\[
e^{ \tau} H(\tau) - H(0)\leq k_d \mathcal{M}_{\mathcal{R}} \int_0^\tau  e^{ 2s-m_d e^s}  \ \d s.
\]
Let now $A(m_d)$ be such that
\[
e^{2s-m_d e^s}\leq e^{s-\frac{m_d}{2} e^s}\quad\text{for all}\quad s\geq A(m_d).
\]
Then,
\[
\begin{aligned}
e^{\tau} H(\tau) - H(0)&\leq k_d\mathcal{M}_{\mathcal{R}}\left( \int_0^{A(m_d)}  e^{2s-m_d e^s}  \ \d s + \int_{A(m_d)}^\tau  e^{s-\frac{m_d}{2} e^s}  \ \d s\right)\\[10pt]
& \leq k_d\mathcal{M}_{\mathcal{R}}\left( \tilde{A}(m_d) + \frac{2}{m_d}e^{-\frac{m_d}{2}\tau} \right)\leq K(d, \mathcal{M}_{\mathcal{R}})
\end{aligned}
\]
for some constant $K(d, \mathcal{M}_{\mathcal{R}})$. In total, we obtain
\[
H(\tau)\leq \left(K(d, \mathcal{M}_{\mathcal{R}}) + H(0)\right)e^{- \tau}.
\]
If we define
\[
H^*(v_0):= K(d, \mathcal{M}_{\mathcal{R}})+ H(0)
\]
then we get, thanks to the Csiszár-Kullback inequality,
\[
\|v-V\|_{L^1([\rho_0(\tau), \infty))}\leq \sqrt{H^*(v_0)}\cdot e^{-\frac{1}{2}\tau}.
\]
Writing it down in terms of $u$ (writing $H^*(u_0)$ instead of $H^*(v_0)$) this reads
\[
\int_{\rho_0(\tau)}^\infty \left|e^\frac{\tau}{2} u(t,r) - C(t)e^{-\frac{\left(r+(d-1)(t+1)\right)^2}{4(t+1)}}  \right|\ \sinh^{d-1}(r)\ \d \rho\leq \sqrt{H^*(u_0)}\cdot e^{-\frac{1}{2}\tau}.
\]
Changing variables $\d \rho = e^{-\tau/2}\d r, t= e^\tau-1$,
\[
\int_{0}^\infty \left| u(t,r) - C(t)\cdot\Gamma(t+1, r+(d-1)(t+1))  \right|\ \sinh^{d-1}(r)\ \d r\leq \sqrt{H^*(u_0)}\cdot (t+1)^{-\frac{1}{2}},
\]
where $\Gamma(t,r)$ stands for the Gaussian profile
\begin{equation}\label{eq:GaussianProfile}
	\Gamma(t,r):=\frac{1}{\sqrt{t}}e^{-\frac{r^2}{4t}}, \quad \forall r,t \in \mathbb{R}.
\end{equation}
Since the hyperbolic heat kernel $G_d(t,r)$ is a radial solution in on itself, the previous result still applies if we change $u$ by $G_d$, although paying perhaps with a small time translation so we make sure the initial entropy of the kernel is finite . With a small abuse of notation, we can also write $H(G_d)$ for the initial entropy of the kernel, after performing the change of variables $t\to \tau$ and $r\to \rho$, and we write $\mu (r) = \sinh (r)^{d-1} \d r$ for the radial part of the volume form of $\mathbb{H}^d$. We have proven the following.

\begin{thm}\label{thm:L^1_radially_symmetric_asymptotic}
	Let $u$ be a solution of~\eqref{eq:heatEquation_radial_solutions} such that $u_0(x)=u_0(r)$ for all $x\in\H^d$ and $\mathcal{M}_{\mathcal{R}}<\infty$. Then
	\[
	\| u(t,r) - C(t)\cdot\Gamma(t+1, r+(d-1)(t+1)) \|_{L^1(\R^+; \d\mu)}  \leq \sqrt{H^*(u_0)}\cdot (t+1)^{-\frac{1}{2}},
	\]
	and, in particular,
	\[
	\|\mathcal{M}_{\mathcal{R}} G_d(t,r) - C(t)\cdot\Gamma(t+1, r+(d-1)(t+1))  \|_{L^1(\R^+; \d\mu)}\leq\sqrt{H^*(G_d)} \cdot (t+1)^{-\frac{1}{2}},
	\]
	where $C(t)$ is defined by the relation given in \eqref{eq_definC}.
\end{thm}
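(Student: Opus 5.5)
The statement is essentially the outcome of the entropy computation carried out just above, so the plan is to organise that computation into a rigorous argument.

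\textbf{Step 1: change of variables and transient equilibrium.} Starting from a radial solution $u$ of~\eqref{eq:heatEquation_radial_solutions}, we pass to $v(\tau,\rho)=e^{\tau/2}\sinh^{d-1}(r)\,u$ with $t=e^\tau-1$ and $r=(d-1)e^\tau+e^{\tau/2}\rho$. This $v$ solves~\eqref{eq:main_spherical_coordinates_symmetric_datum} on the moving half-line $\rho\ge\rho_0(\tau)$, vanishes at $\rho_0(\tau)$, and has mass $\mathcal{M}_{\mathcal{R}}$. We then take the transient equilibrium $V$ from~\eqref{eq:def_V}, with $C(t)$ fixed by~\eqref{eq_definC}. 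The equation rewrites as $v_\tau=\partial_\rho(v\,\partial_\rho\ln(v/V))$.

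\textbf{Step 2: differentiate the entropy.} We differentiate $H(\tau)=\int v\ln(v/V)$ and treat the terms as follows.
\begin{itemize}
\item The boundary term vanishes, since $v/V\to\gamma_0(\tau)\in(0,\infty)$ while $V(\tau,\rho_0)=0$.
\item The term $\int v_\tau$ vanishes by conservation of mass.
\item The term $-\int v\,V_\tau/V$ is bounded by $k_d e^{\tau-m_de^\tau}\mathcal{M}_{\mathcal{R}}$, using~\eqref{eq:bound_quotient_C(t)} and the sign $\coth r-1\ge0$.
\item Integrating by parts, the last term becomes $-\int v|\partial_\rho\ln(v/V)|^2$. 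The boundary term at $\rho_0$ is killed by $v=0$, and the one at infinity is justified by Gaussian decay.
\end{itemize}

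\textbf{Step 3: log-Sobolev and Gronwall.} The log-Sobolev inequality of Corollary~\ref{cor:log_sobolev} for the measure $V/\mathcal{M}_{\mathcal{R}}$ on $[\rho_0(\tau),\infty)$ gives $\partial_\tau H\le k_de^{\tau-m_de^\tau}\mathcal{M}_{\mathcal{R}}-H$. Gronwall then yields $H(\tau)\le(K+H(0))e^{-\tau}$, because $\int_0^\infty e^{2s-m_de^s}\,\d s<\infty$.

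\textbf{Step 4: back to $u$.} Csiszár--Kullback gives $\|v-V\|_{L^1}\le\sqrt{H^*(u_0)}\,e^{-\tau/2}$. Undoing the change of variables, using $\d\rho=e^{-\tau/2}\d r$ and $e^{\tau}=t+1$, turns $V$ into $C(t)\sinh^{d-1}(r)\,e^{\tau/2}\Gamma(t+1,r+(d-1)(t+1))$. This gives the first inequality. The second inequality follows by applying the same argument to the radial solution $\mathcal{M}_{\mathcal{R}}G_d$, after a small time shift if needed so that its initial entropy is finite.

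\textbf{Main obstacle.} The delicate step is the log-Sobolev inequality in Step 3. The reference measure $V\,\d\rho$ is not a pure Gaussian: it carries the factor $\sinh^{d-1}(r)$ and lives on a time-dependent half-line, so a uniform constant (here $1$, i.e.\ decay rate $e^{-\tau}$) is not automatic. The first thing to try is to write $V\propto e^{-W}$ with $W=\tfrac{(\rho+2(d-1)e^{\tau/2})^2}{4}-(d-1)\ln\sinh r$. One then checks $W''=\tfrac12+(d-1)e^{\tau}\sinh^{-2}r\ge\tfrac12$, so $V$ is uniformly log-concave on the convex set $[\rho_0,\infty)$. Bakry--Émery then gives constant $1$ for the normalized measure, which is presumably the content of Corollary~\ref{cor:log_sobolev}.

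A secondary point to justify is finiteness of $H(0)$. When $H(0)$ is infinite the estimate is vacuous, so nothing is lost there.
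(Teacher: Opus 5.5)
Your proposal is correct and follows the paper's argument step for step: change of variables, mass-normalised transient equilibrium, entropy dissipation with the $C'/C$ error term, log-Sobolev with constant $1$, Gronwall, Csisz\'ar--Kullback, undoing the variables, and the time-shifted application to $\mathcal{M}_{\mathcal{R}}G_d$. Your explicit check that $-\partial_\rho^2\ln V=\tfrac12+(d-1)e^{\tau}\sinh^{-2}(r)\ge\tfrac12$ is exactly what the paper leaves implicit when it says Corollary~\ref{cor:log_sobolev} for $V_1$ follows directly from Lemma~\ref{lem:curvature-logsob}.
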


Since
	\[
	1-(d-1)e^{-(d-1)t}\leq \left(1-e^{-(d-1)t}\right)^{d-1}
	\]
	we deduce from \eqref{eq:limit_C(t)} that
	\[
	\frac{2^{d-2}\mathcal{M}_{\mathcal{R}}}{\sqrt{\pi} }\leq C(t)\leq \frac{2^{d-2}\mathcal{M}_{\mathcal{R}}}{\left[1-(d-1)e^{-(d-1)(t+1)}\right]\cdot\left[\sqrt{\pi} - \frac{1}{\sqrt{\lambda_1 (t+1)}}e^{-\frac{\lambda_1 (t+1)}{4}}\right]},
	\]
	which implies
	\[
	\frac{2^{d-2}\mathcal{M}_{\mathcal{R}}}{\sqrt{\pi} }\leq C(t)\leq \frac{2^{d-2}\mathcal{M}_{\mathcal{R}}}{\sqrt{\pi}} + O(e^{-m_d(t+1)}),
	\]
	where we have $m_d := \min \left(d-1, \lambda_1 /4\right).$ This implies that in the previous result we are able to exchange $C(t)$ by its limit value $C_\infty = \frac{2^{d-2}\mathcal{M}_{\mathcal{R}}}{\sqrt{\pi}}$ without loosing speed of convergence, since
	\[
	\| u- C_\infty\Gamma \|_{L^1} \leq \| u - C(t)\cdot\Gamma \|_{L^1} + (C(t)-C_\infty)\|\Gamma \|_{L^1} \lesssim \sqrt{H^*(u_0)}\cdot (t+1)^{-\frac{1}{2}} + e^{-m_d(t+1)},
	\]
	and therefore there must exist a time $t_0$ depending on $m_d$ and $H^*(u_0)$ such that
	\[
	\| u- C_\infty\Gamma \|_{L^1} \lesssim  \sqrt{H^*(u_0)}\cdot (t+1)^{-\frac{1}{2}}\quad\text{for all}\quad t\geq t_0.
	\]
	This yields the following corollary.
	\begin{cor}\label{cor:L^1_radially_symmetric_asymptotic}
		Let $u$ be a solution of~\eqref{eq:heatEquation_radial_solutions} such that $u_0(x)=u_0(r)$ for all $x\in\H^d$ and $\mathcal{M}_{\mathcal{R}}<\infty$. Define $C_\infty = \frac{2^{d-2}\mathcal{M}_{\mathcal{R}}}{\sqrt{\pi}}$. Then there exists a $t_0$ such that, for all $t\geq t_0$,
		\[
		\| u(t,r) - C_\infty\ \Gamma(t+1, r+(d-1)(t+1)) \|_{L^1(\R^+; \d\mu)}  \lesssim \sqrt{H^*(u_0)}\cdot (t+1)^{-\frac{1}{2}},
		\]
		and, in particular,
		\[
		\|\mathcal{M}_{\mathcal{R}} G_d(t,r) - C_\infty\ \Gamma(t+1, r+(d-1)(t+1))  \|_{L^1(\R^+; \d\mu)}\lesssim\sqrt{H^*(G_d)} \cdot (t+1)^{-\frac{1}{2}}.
		\]
		
		As a consequence of this,
		\[
		\| u(t,x) - \mathcal{M}_{\mathcal{R}}P_d(t,x)  \|_{L^1(\mathbb{H}^d)}\lesssim \left(\sqrt{H^*(G_d)}+\sqrt{H^*(v_0)}\right) \cdot (t+1)^{-\frac{1}{2}}.
		\]
	\end{cor}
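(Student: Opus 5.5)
The plan is to derive the corollary from Theorem~\ref{thm:L^1_radially_symmetric_asymptotic} together with the bounds \eqref{eq:limit_C(t)} on $C(t)$, using the triangle inequality twice.

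\textbf{Step 1: replace $C(t)$ by $C_\infty$.} From \eqref{eq:limit_C(t)} and the elementary inequality $1-(d-1)e^{-(d-1)s}\le (1-e^{-(d-1)s})^{d-1}$ I get $0\le C(t)-C_\infty\lesssim e^{-m_d(t+1)}$. A substitution $s=r+(d-1)(t+1)$ shows
\[
\|\Gamma(t+1,r+(d-1)(t+1))\|_{L^1(\R^+;\d\mu)}=\int_0^\infty \Gamma(t+1,r+(d-1)(t+1))\,\sinh^{d-1}(r)\,\d r
\]
is bounded uniformly in $t$. Indeed, $\sinh^{d-1}(r)\le 2^{1-d}e^{(d-1)r}$, and completing the square gives
\[
e^{(d-1)r}e^{-\frac{(r+(d-1)(t+1))^2}{4(t+1)}}=e^{-\frac{(r-(d-1)(t+1))^2}{4(t+1)}},
\]
so the integral is at most $2^{1-d}\sqrt{4\pi}$. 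This is exactly the normalization that produces $C_\infty=2^{d-2}\mathcal M_{\mathcal R}/\sqrt\pi$. Hence
\[
\|u-C_\infty\Gamma\|_{L^1}\le \|u-C(t)\Gamma\|_{L^1}+(C(t)-C_\infty)\|\Gamma\|_{L^1}\lesssim \sqrt{H^*(u_0)}(t+1)^{-1/2}+e^{-m_d(t+1)}.
\]
Since $H^*(u_0)\ge K(d,\mathcal M_{\mathcal R})>0$, I would choose $t_0$ (depending on $m_d$ and $K$) so that $e^{-m_d(t+1)}\le (t+1)^{-1/2}\le \sqrt{H^*(u_0)/K}\,(t+1)^{-1/2}$ for $t\ge t_0$. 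This absorbs the exponential term and proves the first bound.

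\textbf{Step 2: apply the same bound to the heat kernel.} I would apply Step 1 to the radial solution $\mathcal M_{\mathcal R}G_d$, which has mass $\mathcal M_{\mathcal R}$. Its initial datum is a Dirac mass, so its entropy is infinite. To handle this I would time-shift, starting from $\mathcal M_{\mathcal R}G_d(s_0,\cdot)$ for a fixed small $s_0$, which is what $H(G_d)$ denotes. The shift changes $(t+1)$ into $(t-s_0+1)$ in the Gaussian, so the comparison produces an extra term
\[
\|\Gamma(t+1,\cdot)-\Gamma(t+1-s_0,\cdot)\|_{L^1(\d\mu)}.
\]
Both centres are shifted by $(d-1)s_0$ and the variances differ by $O(1)$, so after the completion of squares above this is the $L^1$ distance between two Gaussians of width $\sqrt t$ whose parameters differ by $O(1)$. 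That distance is $O(t^{-1/2})$. This step, controlling the time shift needed to make the kernel's entropy finite, is the only delicate point. I expect it to be harmless, and I would handle it by this explicit Gaussian computation.

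\textbf{Step 3: combine.} With both estimates in hand, the final statement follows from
\[
\|u-\mathcal M_{\mathcal R}P_d\|_{L^1(\H^d)}=|\S^{d-1}|\,\|u-\mathcal M_{\mathcal R}G_d\|_{L^1(\R^+;\d\mu)}\le |\S^{d-1}|\bigl(\|u-C_\infty\Gamma\|+\|C_\infty\Gamma-\mathcal M_{\mathcal R}G_d\|\bigr),
\]
where the factor $|\S^{d-1}|$ comes from integrating out the angular variable for radial functions and is absorbed into $\lesssim$.
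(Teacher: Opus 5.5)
Your proposal is correct and follows the paper's own argument. Step 1 reproduces the paper's triangle inequality: $0\le C(t)-C_\infty\lesssim e^{-m_d(t+1)}$, a uniform bound on $\|\Gamma\|_{L^1(\d\mu)}$, and absorption of the exponential term for $t\ge t_0$. Steps 2--3 spell out what the paper leaves to ``in particular'' and ``as a consequence'', and your explicit Gaussian estimate for the time shift is more careful than the paper, which only mentions paying ``a small time translation''.

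One caveat, inherited from the statement rather than from your method. The paper normalizes $|\S^{d-1}|\int_0^\infty G_d\,\d\mu(r)=1$, so the radial mass of $\mathcal{M}_{\mathcal{R}}G_d$ is $\mathcal{M}_{\mathcal{R}}/|\S^{d-1}|$, not $\mathcal{M}_{\mathcal{R}}$ as you assert in Step 2. Your Steps 2--3 therefore prove the last two displays with $|\S^{d-1}|\,\mathcal{M}_{\mathcal{R}}G_d=\|u_0\|_{L^1(\H^d)}P_d$ in place of $\mathcal{M}_{\mathcal{R}}G_d$. As literally written, those displays compare functions of different mass, so they cannot decay.
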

	
Since the difference of two solutions is itself a solution of the Heat Equation, then the $L^1-L^\infty$ regularizing effect described in \cite[Proposition 2.3]{vazquez1992asymptotic} applies to the function
\[
\tilde u(t,r):= u(t, r)- \mathcal{M}_{\mathcal{R}}G_d(t,r),
\]
implying that
\[
\|\tilde{u}(t,\cdot)\|_{L^\infty(\R^+)}\leq C(d)t^{-\frac{3}{2}}e^{-\lambda_1 t}\|\tilde{u}(t,\cdot)\|_{L^1(\mathbb{H}^d)},
\]
where $\lambda_1$ is the bottom of the spectrum of the Laplacian in $\mathbb{H}^d$. This $L^1-L^\infty$ regularizing effect and the previous theorem readily yield the following.

\begin{thm}\label{thm:radially_symmetric_L^infty asymtotic}
	Let $u$ a solution of~\eqref{eq:heatEquation_radial_solutions} such that $u_0(x)=u_0(r)$ for all $x\in\H^d$ and $\mathcal{M}_{\mathcal{R}}<\infty$. Then
	\[
	\| u(t,r) - \mathcal{M}_{\mathcal{R}}G_d(t,r)  \|_{L^\infty(\R^+)}\lesssim \left(\sqrt{H^*(G_d)}+\sqrt{H^*(v_0)}\right) \cdot e^{-\lambda_1 t}t^{-2}.
	\]
\end{thm}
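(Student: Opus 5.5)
The plan is to combine Corollary~\ref{cor:L^1_radially_symmetric_asymptotic} with the $L^1$--$L^\infty$ smoothing effect, using the semigroup property to move the $L^1$ norm back in time. Set $\tilde u(t,r) := u(t,r)-\mathcal{M}_{\mathcal{R}}G_d(t,r)$. Both $u$ and $\mathcal{M}_{\mathcal{R}}G_d$ solve \eqref{eq:heatEquation_radial_solutions}, so by linearity $\tilde u$ is a (sign-changing) radial solution. The first step is to check that the smoothing estimate of \cite[Proposition 2.3]{vazquez1992asymptotic} applies to $\tilde u$. It is stated for nonnegative data, but it follows from the pointwise kernel bound $\sup_{x,y}P^y_d(s,x)=G_d(s,0)\lesssim s^{-3/2}e^{-\lambda_1 s}$ for $s\ge 1$, which comes from \eqref{eq:bound_heat_kernel_in_terms_of_h} at $r=0$. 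Since that argument only uses $|\tilde u(s)|\le \int |\tilde u(t)(y)|P^y_d(s-t,\cdot)\,\d\mu(y)$, it holds for signed data.

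Next, fix $t$ large and write $t = t/2 + t/2$. The semigroup representation gives
\[
\|\tilde u(t,\cdot)\|_{L^\infty}\le \sup_{x,y}G_d\bigl(\tfrac t2,\lambda(x,y)\bigr)\,\|\tilde u(\tfrac t2,\cdot)\|_{L^1(\H^d)}\lesssim (t/2)^{-3/2}e^{-\lambda_1 t/2}\,\|\tilde u(\tfrac t2,\cdot)\|_{L^1(\H^d)} .
\]
Corollary~\ref{cor:L^1_radially_symmetric_asymptotic} then bounds $\|\tilde u(t/2)\|_{L^1}\lesssim (\sqrt{H^*(G_d)}+\sqrt{H^*(v_0)})(t/2+1)^{-1/2}$. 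Multiplying the powers gives $t^{-3/2}\cdot t^{-1/2}=t^{-2}$, which is exactly the power claimed.

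The main obstacle is the exponential factor. The half-time split only yields $e^{-\lambda_1 t/2}$, not $e^{-\lambda_1 t}$. To get the full rate I would avoid splitting the time and instead use the $L^1$ estimate at time $t-1$ (or $t-s_0$ for a fixed $s_0$) together with the kernel at time $1$. That loses the exponential gain entirely, so it does not work either.

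The honest route is the one the paper indicates: apply the quoted regularizing inequality literally as stated, $\|\tilde u(t)\|_{L^\infty}\le C(d)t^{-3/2}e^{-\lambda_1 t}\|\tilde u(t)\|_{L^1}$, with the $L^1$ norm taken at the same time $t$, and then insert the Corollary at time $t$. The point to verify carefully is therefore that Vázquez's Proposition~2.3 is really an estimate at equal times in this form. I would try to justify it for radial solutions by looking at the radial profile near $r=(d-1)t$: the $L^1$ mass there is spread over volume $\sim e^{(d-1)^2 t}\sqrt t$, which is where the $e^{-\lambda_1 t}$ comes from.

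If that equal-time justification fails, I would settle for the weaker rate $e^{-\lambda_1 t/2}t^{-2}$ obtained from the half-time split. For $t\ge t_0$ it also suffices to restrict to $t\ge 2$, so that $t+1\sim t$ in all the bounds.
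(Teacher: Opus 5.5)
Your proposal does not reach the stated bound, and the step you flagged is exactly where it breaks. Your first step is fine. For signed solutions $|\tilde u(t)|\le\int|\tilde u(t-s)(y)|\,P^y_d(s,\cdot)\,\d\mu(y)$, so $\|\tilde u(t)\|_{L^\infty}\le G_d(s,0)\,\|\tilde u(t-s)\|_{L^1(\H^d)}$. But the $L^1$ norm necessarily sits at the \emph{earlier} time $t-s$, and the gain $e^{-\lambda_1 s}$ is paid for by the elapsed time $s$. The factor $e^{-\lambda_1 s}$ and the decay $(t-s)^{-1/2}$ from Corollary~\ref{cor:L^1_radially_symmetric_asymptotic} therefore compete, and no choice of $s$ gives both. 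The ``equal-time'' inequality $\|\tilde u(t)\|_{L^\infty}\le C(d)\,t^{-3/2}e^{-\lambda_1 t}\|\tilde u(t)\|_{L^1(\H^d)}$ is not a smoothing estimate. It is exactly what the paper's one-line argument invokes and what your ``honest route'' adopts. For nonnegative solutions it only looks like a smoothing estimate because the $L^1$ norm is conserved; for sign-changing $\tilde u$ it is false.

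In fact no argument can give the statement, because it is false. In $d=3$ one has $\lambda_1=1$ and $G_3(t,r)=(4\pi t)^{-3/2}\frac{r}{\sinh r}e^{-t-\frac{r^2}{4t}}$. So for radial $u_0\ge0$ and any constant $m$,
\[
u(t,0)-m\,G_3(t,0)=(4\pi t)^{-\frac32}e^{-t}\Bigl(\int_{\H^3}u_0(y)\,\frac{r_y}{\sinh r_y}\,e^{-\frac{r_y^2}{4t}}\d\mu(y)-m\Bigr).
\]
Take $m=M:=\|u_0\|_{L^1(\H^3)}$. Since $r/\sinh r<1$ for $r>0$, the bracket is at most $\int u_0(y)\frac{r_y}{\sinh r_y}\d\mu(y)-M<0$ for every $t$. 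Now take the statement's $m=\mathcal{M}_{\mathcal{R}}=M/(4\pi)$. The bracket then converges to $\int u_0(y)\frac{r_y}{\sinh r_y}\d\mu(y)-M/(4\pi)$, which is positive, e.g., whenever $u_0$ is supported in $\{r_y\le 1\}$. Either way $\|u(t)-m\,G_3(t)\|_{L^\infty}\gtrsim t^{-3/2}e^{-\lambda_1 t}$, contradicting $t^{-2}e^{-\lambda_1 t}$. With $m=M$ this also directly refutes the equal-time inequality, since $\|u(t)-M\,G_3(t)\|_{L^1(\H^3)}\to0$.

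This is the $p>2$ regime of \cite{Papageorgiou2025}. The supremum lives near the pole, where $u(t)\approx\bigl(\int u_0\,\mathcal{S}_0\,\d\mu\bigr)P_d(t)$ (here $\mathcal{S}_0(r)=r/\sinh r$). It does not live on the annulus $r\approx(d-1)t$, where densities are only of order $e^{-(d-1)^2t}t^{-1/2}=e^{-4\lambda_1 t}t^{-1/2}$, so the heuristic you proposed looks at the wrong region. What remains true is the trivial bound $\|u(t)-M\,G_d(t)\|_{L^\infty}\le 2M\,G_d(t,0)\lesssim M\,t^{-3/2}e^{-\lambda_1 t}$ for $t\ge1$. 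The example shows this bound is sharp, and it is already better than your fallback $t^{-2}e^{-\lambda_1 t/2}$.
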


\subsection{Horospheric solutions}\label{sect:horospheric}

Now let $u$ be a classical solution to \eqref{eq:heatEquationHyperbolic} with initial data $u_0$ depending only on the distance to a certain horosphere $\mathcal{H}$. Then, following~\eqref{laplacianParabolic}, $u$ only depends on the distance to $\mathcal{H}$ as well and satisfies 
\[
u(t,r,y)=u(t,r), \quad \text{for any } y \in \mathbb{R}^{d-1},\quad \Delta_{\H^d} = \frac{1}{e^{(d-1)r}} \partial_r \left(e^{(d-1)r}\cdot \partial_r u\right).
\]
Equation~\eqref{eq:heatEquationHyperbolic} then becomes
\begin{equation}\label{eq:heatEquation_horospheric_solutions}
	\begin{cases}
		u_t = e^{-(d-1)r}\partial_r\left(e^{(d-1)r}\cdot \partial_r u\right), \quad &r\in\R, t>0,\\
		u(0,r)=u_0(r), & r\in\R.
	\end{cases}
\end{equation}
We say that $u$ is a \textit{horospherical solution} to the heat equation. In opposition to radially symmetric solutions, horospherical solutions to \eqref{eq:heatEquationHyperbolic} are not in $L^{1} (\mathbb{H}^{d})$ in general. However, the following notion of mass make sense for this type of solutions:
\[
\|u(t,\cdot)\|_{L^1(\mathbb{R}, e^{(d-1)r} \d r)} = \int_{\R} u(t,r)\ e^{(d-1)r}\ \d r,
\]
and it can be checked that this quantity does not depend on $t$ if $u$ solves the heat equation (i.e., this mass is preserved). From now on, we write
\[
\mathcal{M}_{\mathcal{H}} := \|u\|_{L^1(\mathbb{R}, e^{(d-1)r} \d r)} = \int_{\R} u(t,r)\ e^{(d-1)r}\ \d r = \int_{\R} u_0 (r)\ e^{(d-1)r}\ \d r,
\]
and we refer to this quantity as the \textit{horospherical mass} of $u$. Furthermore, we will assume the integrability condition $\mathcal{M}_{\mathcal{H}}<+\infty$.

Now define 
\begin{equation}\label{definv}
v(\tau, \rho):=e^\frac{\tau}{2}\cdot e^{(d-1)r (\tau, \rho)} \cdot u\big(t(\tau), r(\tau,\rho)\big), \quad \tau \geq 0, \quad \rho \in \mathbb{R},
\end{equation}
where, as in the previous sections, we considered
\[
t(\tau)=e^\tau-1,\quad r(\tau,\rho)= (d-1)e^\tau  + e^\frac{\tau}{2}\rho.
\]
Then, we expect $v$ to satisfy a Fokker-Planck type equation. We check that this is in fact the case, and after some easy computations we get that $v$ solves
%\[
%v_\tau = \frac{v}{2}+ (d-1)r_\tau v + e^{\frac{\tau}{2}}e^{(d-1)r}t_\tau u_t +e^{\frac{\tau}{2}}e^{(d-1)r} r_\tau u_r
%\]
%and 
%\begin{equation}\label{firstDerivativeV}
%	v_\rho = (d-1)r_\rho v + e^{\frac{\tau}{2}}e^{(d-1)r}r_\rho u_r,
%\end{equation}
%thus, using \eqref{derivatives}, we get after rearranging the terms that $v$ solves
\begin{equation}\label{fokkerPlanckParabolic}
	\begin{cases}
	v_\tau = \partial_\rho \left( v_\rho + \frac{\rho}{2} v \right), \quad &\rho \in \mathbb{R},\ \tau>0,\\
		v(0,\cdot) = v_0, &\rho\geq \rho_0(\tau),
	\end{cases}
\end{equation}
where we define $v_0 (\rho) := e^{(d-1)r(0,\rho)}u_0 (r(0, \rho))$.

 The equilibrium for \eqref{fokkerPlanckParabolic} is obtained by solving the equation $ v_\rho + \frac{\rho}{2} v =0$. It is straightforward that the solution to this equation is given by
\[
V(\tau, \rho) = C(\tau) \cdot e^{(d-1)r(\tau,\rho)} e^{-\frac{\left(\rho+2(d-1)e^{\frac{\tau}{2}}\right)^2}{4}} = C(\tau)e^{-\frac{\rho^2}{4}} , \quad \forall \tau \geq 0, \quad \rho \in \mathbb{R},
\]
and we can write this function in terms of the variables $t,r$ as
\begin{equation}\label{asymptoticProfileParabolicTauRho}
V(t,r) = C(t) \cdot  e^{(d-1)r} \cdot e^{-\frac{(r+(d-1)(t+1))^2}{4(t+1)}}= C(t)e^{-\frac{(r-(d-1)(t+1))^2}{4(t+1)}}, \quad \forall t \geq 0, \quad r \in \mathbb{R}.
\end{equation}
Since $C(\tau)$ is the variable needed to make sure that $\|v(\tau,\cdot)\|_{L^1}= \|V(\tau,\cdot)\|_{L^1}$, it can be easily verified that in this case
\[
C(\tau)= \mathcal{M}_{\mathcal{H}},\qquad V(\tau, \rho)= \mathcal{M}_{\mathcal{H}}\  e^{-\frac{\rho^2}{4}}.
\]
Note also that equation~\eqref{fokkerPlanckParabolic} can be rewritten now as
\[
\begin{cases}
	v_\tau = \partial_\rho \left( v\cdot\partial_\rho \ln\left(\frac{v}{V}\right) \right), \quad &\rho \in \mathbb{R},\ \tau>0,\\
	v(0,\cdot) = v_0, &\rho\geq \rho_0(\tau),
\end{cases}
\]
This formulation suggest studying the large-time behaviour of $v$ by means of the convergence of the relative entropy
\[
H(\tau) := \int_{\R} v \cdot \ln \left(\frac{v}{V}\right) \, \d \rho, \quad \forall \tau \geq 0, 
\] 
where $v$ and $V$ are given by \eqref{definv} and \eqref{asymptoticProfileParabolicTauRho} respectively. By computing the first derivative of $H$, we get
\[
\begin{split}
	\partial_\tau H (\tau) & = \int_{\R} v_\tau \, \d \rho + \int_{\R} v_\tau \cdot\ln\left(\frac{v}{V}\right) \, \d \rho \\
%	& = \left[ v \partial_\rho \left( \ln \left(\frac{v}{V}\right) \right) \ln \left(\frac{v}{V}\right) \right]_{-\infty}^{+\infty} - \int_{-\infty}^{+\infty} v \abs{\partial_\rho \ln \left(\frac{v}{V}\right)}^2 \, \d \rho \\
	& =  - \int_{\R} v \abs{\partial_\rho \ln \left(\frac{v}{V}\right)}^2 \, \d \rho,
\end{split}
\]
where for the second identity we have used the conservation of mass.

From here, after applying a log-Sobolev inequality from Lemma~\ref{lem:curvature-logsob} with constant $\lambda=1$, Gronwall's Lemma and the Csiszár-Kullback inequality, and then undoing the change of variables, we prove the main result of this section. In order to keep everything on the original variables, we will write $H(u_0)$ instead of $H(v_0)=H(0)$, since the meaning of this symbol is clear. Also, we recall the notation $\d \nu (r) = e^{(d-1)r} \d r$.

\begin{thm}\label{thm:convregece_horoshepric_L^1}
	Let $u$ be a solution of~\eqref{eq:heatEquation_horospheric_solutions} such that $u_0(x)=u_0(r)$ for all $x\in\H^d$, and $\mathcal{M}_{\mathcal{H}}<\infty$. Then,
	\[
	\|u(t,r)- \mathcal{M}_{\mathcal{H}}\ \Gamma(t, r+(d-1)t) \|_{L^1(\R, \d\nu)} \lesssim \sqrt{H(u_0)} \cdot t^{-\frac{1}{2}}
	\]
%	\[
%	\displaystyle\int_{\R} \abs{u(t,r)- \mathcal{M}_{\mathcal{H}}\ \Gamma(t+1, r+(d-1)(t+1))} e^{(d-1)r} \, \d r \leq \sqrt{H(u_0)} \cdot (t+1)^{-\frac{1}{2}}
%	\]
	where $\Gamma(t,r)$ stands for the standard Gaussian, defined in \eqref{eq:GaussianProfile}.
\end{thm}

Indeed, it can be verified that the fundamental solution of~\eqref{eq:heatEquation_horospheric_solutions} is in fact $\Gamma(t, r+(d-1)t)$, and any integrable solution of~\eqref{eq:heatEquation_horospheric_solutions} can be written as
\[
u(t, r)=\int_\R \Gamma(t, r-R+(d-1)t)\ u_0(R)\ \d R.
\]
As a consequence, we can deduce an smoothing effect by means of
\[
\begin{aligned}
	u(2t, r)&- \mathcal{M}_{\mathcal{H}}\ \Gamma(2t+1, r+(d-1)(2t+1))\\[10pt]
	&= \int_\R \Gamma(t, r-R+(d-1)t)\cdot \left(u(t, R)- \mathcal{M}_{\mathcal{H}}\ \Gamma(t+1, R+(d-1)(t+1))\right)\ \d R
\end{aligned}
\]
which implies
\[
\begin{aligned}
	&|u(2t, r)- \mathcal{M}_{\mathcal{H}}\ \Gamma(2t+1, r+(d-1)(2t+1))|\\[10pt]
	&= \int_\R \frac{\Gamma(t, r-R+(d-1)t)}{e^{(d-1)R}}\cdot |u(t, R)- \mathcal{M}_{\mathcal{H}}\ \Gamma(t+1, R+(d-1)(t+1))| e^{(d-1)R}\ \d R.
\end{aligned}
\]
Using now that
\[
\frac{\Gamma(t, r-R+(d-1)t)}{e^{(d-1)R}}\lesssim t^{-\frac{1}{2}}
\]
and Theorem~\ref{thm:convregece_horoshepric_L^1} we obtain the following.

\begin{thm}\label{thm:convregece_horoshepric_L^infty}
	Let $u$ be a solution of~\eqref{eq:heatEquation_horospheric_solutions} such that $u_0(x)=u_0(r)$ for all $x\in\H^d$, and $\mathcal{M}_{\mathcal{H}}<\infty$. Then,
	\[
	\|u(t,r)- \mathcal{M}_{\mathcal{H}}\ \Gamma(t+1, r+(d-1)(t+1)) \|_{L^\infty(\R)} \lesssim \sqrt{H(u_0)} \cdot (t+1)^{-1}.
	\]
\end{thm}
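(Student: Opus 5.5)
The plan is to combine the $L^1(\R,\d\nu)$ estimate of Theorem~\ref{thm:convregece_horoshepric_L^1} with an $L^1(\d\nu)\to L^\infty$ smoothing estimate for the one-dimensional horospheric heat semigroup. I will run it from time $t$ to time $2t$ and then relabel $2t\mapsto t$. The smoothing step relies on the explicit representation $u(t,r)=\int_\R \Gamma(t,r-R+(d-1)t)\,u_0(R)\,\d R$ of solutions of~\eqref{eq:heatEquation_horospheric_solutions}. I also use the fact that $W(t,r):=\mathcal{M}_{\mathcal{H}}\Gamma(t+1,r+(d-1)(t+1))$ is itself a solution: it is the fundamental solution shifted by one unit of time.

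\textbf{Step 1 (semigroup identity).} By linearity and the semigroup property, the difference $u-W$ solves~\eqref{eq:heatEquation_horospheric_solutions}. Evolving it from time $t$ to time $2t$ gives
\[
u(2t,r)-W(2t,r)=\int_\R \frac{\Gamma(t,r-R+(d-1)t)}{e^{(d-1)R}}\,\bigl(u(t,R)-W(t,R)\bigr)\,e^{(d-1)R}\,\d R .
\]
I will check this by verifying that $\Gamma(t,r-R+(d-1)t)$ solves the equation in $(t,r)$ and concentrates at $r=R$ as $t\to 0$, and by using uniqueness in the class of solutions with finite horospherical mass.

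\textbf{Step 2 (Hölder in $L^1(\d\nu)\times L^\infty$).} Taking absolute values in Step 1 gives
\[
|u(2t,r)-W(2t,r)|\le \sup_{R}\frac{\Gamma(t,r-R+(d-1)t)}{e^{(d-1)R}}\;\|u(t,\cdot)-W(t,\cdot)\|_{L^1(\R,\d\nu)} .
\]
The second factor is controlled by Theorem~\ref{thm:convregece_horoshepric_L^1}, in the form with $t+1$ derived just before that theorem, by $\sqrt{H(u_0)}\,(t+1)^{-1/2}$. The first factor should contribute the second $t^{-1/2}$, giving $t^{-1/2}\cdot(t+1)^{-1/2}\sqrt{H(u_0)}\lesssim\sqrt{H(u_0)}\,(2t+1)^{-1}$ for $t\ge1$. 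For $t\le 1$, the bound follows from the maximum principle and the boundedness of $W$, after absorbing constants into $\lesssim$.

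\textbf{Step 3 (kernel bound; main obstacle).} The crux is the bound $\Gamma(t,r-R+(d-1)t)\,e^{-(d-1)R}\lesssim t^{-1/2}$ used in the paper. I will maximise in $R$ by completing the square in $s=r-R$. The exponent is $-(s+(d-1)t)^2/4t-(d-1)(r-s)$, and its maximum over $s$ equals $-(d-1)r$. Hence
\[
\sup_R \Gamma(t,r-R+(d-1)t)\,e^{-(d-1)R}= t^{-1/2}e^{-(d-1)r}.
\]
This gives the claimed $t^{-1/2}$ uniformly in the region $r\ge -c$, and with a factor $e^{(d-1)|r|}$ otherwise.

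So the delicate point is the half-line $r\to-\infty$. There I would try to bypass the $L^1(\d\nu)$ route. The idea is to bound $|u-W|\le u+W$ directly using the Gaussian representation. Both terms are controlled by $\|u_0\|_{L^\infty}$-type Gaussian tails, which decay in $t$ uniformly for $r$ far inside the horoball, since the mass is transported toward $r\approx(d-1)t$. If that fails, I would state the estimate with the weight $e^{(d-1)r}$ on the left, or restrict to $r\ge -(d-1)t$, which is exactly where the paper's kernel bound holds. I expect this uniformity in $r$ to be the genuine difficulty.
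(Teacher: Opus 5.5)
Your Steps 1--3 are the paper's own argument. The paper evolves $u-W$ from $t$ to $2t$ and pairs the $L^1(\R,\d\nu)$ estimate of the previous theorem against the kernel. It then simply asserts $\Gamma(t,r-R+(d-1)t)\,e^{-(d-1)R}\lesssim t^{-1/2}$. Your computation $\sup_R\Gamma(t,r-R+(d-1)t)\,e^{-(d-1)R}=t^{-1/2}e^{-(d-1)r}$ is correct, and it shows that this assertion is false as a uniform bound. It holds only on fixed half-lines $r\ge r_*$, with constant $e^{(d-1)\max(0,-r_*)}$. It does \emph{not} hold on $r\ge-(d-1)t$, where the factor reaches $e^{(d-1)^2t}$, so your second fallback is mistaken. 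What the argument genuinely proves is the weighted estimate $\sup_{r\in\R}e^{(d-1)r}\,|u(2t,r)-W(2t,r)|\lesssim\sqrt{H(u_0)}\,t^{-1}$ for $t\ge1$, i.e.\ $L^\infty$ convergence of $v$ to $V$ in the $(\tau,\rho)$ variables.

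The gap is the region $r\to-\infty$. Your proposed bypass cannot work there, because the statement itself is false there.
\begin{itemize}
\item The equation is $u_t=u_{rr}+(d-1)u_r$, and its drift carries the \emph{unweighted} profile towards $r\approx-(d-1)t$. It is the measure $u\,\d\nu$, not $u$, that moves to $+(d-1)t$; indeed $\Gamma(t,r+(d-1)t)$ itself peaks at $r=-(d-1)t$.
\item So $u$ and $W$ both have height of order $t^{-1/2}$ precisely deep inside the horoball, and the bound $|u-W|\le u+W$ gains nothing.
\item Quantitatively, in the variable $\xi=r+(d-1)t$ the difference $u-W$ solves the standard heat equation with Lebesgue mass $m:=\int_\R(u_0-W_0)\,\d r$. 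Hence, whenever $u_0\in L^1(\R,\d r)$, $\|u(t)-W(t)\|_{L^\infty(\R)}=|m|\,(4\pi t)^{-1/2}+o(t^{-1/2})$.
\item For $u_0=\1_{[0,1]}$ (compact support, finite entropy) one has $m\neq0$ for either normalization of $W$. The true rate is therefore $t^{-1/2}$, not $(t+1)^{-1}$.
\item For $u_0=\1_{(-\infty,0)}$ (with $\mathcal{M}_{\mathcal{H}}=1/(d-1)$ and finite entropy), $u(t,r)\to1$ as $r\to-\infty$ for every $t$, so there is no decay at all.
\end{itemize}
No argument can close Step~3 as stated; the correct conclusion is the weighted estimate above, or the unweighted one on a fixed half-line.

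Two smaller points. First, since $\int_\R e^{-\rho^2/4}\,\d\rho=2\sqrt{\pi}$, the mass-matched profile is $\frac{\mathcal{M}_{\mathcal{H}}}{2\sqrt{\pi}}\,\Gamma(t+1,r+(d-1)(t+1))$; the paper's choice $C(\tau)=\mathcal{M}_{\mathcal{H}}$ is off by this factor. With $W$ as you wrote it, $u-W$ has nonzero $\d\nu$-mass $(1-2\sqrt{\pi})\mathcal{M}_{\mathcal{H}}$, so the $L^1(\d\nu)$ input to your Step~2 is false. Second, your treatment of $t\le1$ uses $u_0\in L^\infty$, which is not assumed and is not controlled by $H(u_0)$.
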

%
%
%Then, using the relation given in \eqref{definv}, we expect the asymptotic profile of a horospherical solution to \eqref{eq:heatEquationHyperbolic} to be given by
%\begin{equation}\label{asymptoticProfileParabolic}
%	u_{\infty}(t,r)= \frac{C(t)}{\sqrt{t+1}} \cdot e^{-\frac{(r+(d-1)(t+1))^2}{4(t+1)}}, \quad \forall t \geq 0, \quad r \in \mathbb{R}.
%\end{equation}
%Since $u_{\infty}$ is an horospherical solution to \eqref{eq:heatEquationHyperbolic}, we can compute the precise value of $C(t)$ using the conservation of the horospherical mass. In fact, note that
%\[
%\int_{-\infty}^{\infty}u_{\infty} (0,r) e^{(d-1)r} \, \d r = C(0) \int_{-\infty}^{+\infty} e^{-\frac{(r-(d-1)^2)}{4}} \, \d r = 2 C(0) \sqrt{\pi},
%\]
%so we conclude that, for any $t \geq 0$,
%\[
%2 C(0) \sqrt{\pi} = \int_{-\infty}^{\infty}u_{\infty} (0,r) e^{(d-1)r} \, \d r = \frac{C(t)}{\sqrt{t+1}} \int_{-\infty}^{+\infty} e^{-\frac{(r-(d-1)(t+1))^2}{4(t+1)}} \, \d r = 2C(t) \sqrt{\pi},
%\]
%yielding $C(t) = C(0)$ for some $C(0)>0$.
% 
%{\color{blue} After considering the drifted time $\tilde{t}= t+1$ and choosing $C(0) = 1/(2\sqrt{\pi})$ we have that $u_{\infty}$ coincides with a drifted one dimensional Gaussian, which is precisely the asymptotic profile computed in [VAZQUEZ]}

\section{Asymptotic convergence of non-symmetric solutions.}\label{sect:convergence_integrable_no_rates}

Let us study now the problem
\begin{equation}\label{eq:laplace_beltrami_polar_coordinates_general data}
	\begin{cases}
		\partial_t u = \frac{1}{\sinh^{d-1}(r)}\partial_r\left(\sinh^{d-1}(r)\cdot \partial_r u\right) + \frac{1}{\sinh^2(r)}\Delta_{\mathbb{S}^{d-1}}u,\quad &x\in\H^d,\ t>0, \\[10pt]
		u(0,x)=u_0(x)\geq 0,&x\in\H^d,
	\end{cases}
\end{equation}
for a certain non-negative initial datum $u_0\in L^1(\H^d)$ that does not necessarily present the radial symmetry required in Section~\ref{sect:elliptic}. In this brief section we will show the main ideas on how to derive the result~\eqref{eq:convergence_papageorgiou} in the case $p=1$. Let us write in what follows $y=(r_y, \theta_y)$ and $x=(r_x, \theta_x)$ in order to better distinguish between integration variables, fixed and variable points, etc...

Let \( y \in \mathbb{H}^d \setminus \{\mathcal{O}\} \), and, for simplicity of notation, write $P_d$ for the fundamental solution centred at the origin $\mathcal{O}$. We begin by studying the long-time behaviour of the difference between the heat kernels \( P_d \) and \( P_d^y \). It was shown in \cite{vazquez1992asymptotic} for the three-dimensional case, and later extended in \cite{APZ} for general dimension \( d \), that
\[
\lim_{t \to \infty} \| P_d - P_d^y \|_{L^1(\mathbb{H}^d)} > 0.
\]
Here, one can improve on this result by incorporating geometric information. Let $x = (r_x, \theta_x)\in\H^d$ and
denote by $\langle\cdot, \cdot \rangle$ the Euclidean scalar product in $\R^d$. Let also $l\in(0,1)$. For any $\ell \in (-l  \sqrt{t}, l \sqrt{t})$, define $r_{\ell} (t) = (d-1)t+ \ell$ for all $t \in (0,+\infty)$. Then,  we have that $p_t=p_t(r_\ell(t), \theta_x)$ denotes a unique point in $\H^d$. Define finally
\begin{equation}\label{eq:limitQuotient}
	\varphi (y, \theta_x):= \displaystyle\lim_{t \to +\infty} \frac{P_d^y (t,p_t)}{ P_d (t,p_t)} .
\end{equation}
It follows from \cite[Proposition 3.7]{APZ} that $\varphi$ is a well defined continuous function on $\H^d \times \S^{d-1}$, and~\cite[Appendix]{Papageorgiou2025} provides the specific formula
\[
\varphi(y, \theta_x)=\varphi(r_y, \theta_y, \theta_x)=[\cosh(r_y)-\sinh(r_y)\cdot\left\langle \theta, \theta_y\right\rangle]^{-(d-1)}.
\]
Since this article is oriented to readers with a background on PDEs, in Appendix~\ref{sect:ExplicitVarPhi} we offer a derivation of $\varphi$ in the cases $d \in \{2,3\}$ that employs only some analysis and the explicit formula of the kernels that can be found in~\cite{Davies_1989,Grigor'yan,grigoryan1998heat}. Our approach holds in every dimension $d\geq 2$, but the computations become exponentially complicated, so it is more sensible to utilize directly the results from~\cite[Appendix]{Papageorgiou2025}.

From the formula of $\varphi$, we get the following result. It's proof is left to the reader as an exercise, taking into account~\cite[Proposition A.1]{LM}. The interested reader is invited to check also Proposition \ref{propVarPhi}.

\begin{prp}\label{prp:convergence_kernels_varphi}
	For any $y \in \mathbb{H}^d$, we have that 
	\[
	%\begin{equation} \label{differenceHeatKernels}
		\lim_{t \to \infty} \left\| P_d^y(t,\cdot) - \varphi(y, \cdot) \, P_d(t,\cdot) \right\|_{L^1(\mathbb{H}^d)} = 0.
	%\end{equation}
	\]
\end{prp}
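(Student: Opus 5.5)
The plan is a Scheffé-type argument: pointwise convergence of the ratio $P_d^y/P_d$ in the region carrying the mass, combined with conservation of mass. First I check that $\varphi(y,\cdot)P_d(t,\cdot)$ has the same total mass as $P_d^y$, namely one. Integrating in polar coordinates,
\[
\int_{\H^d}\varphi(y,\theta_x)P_d(t,x)\d\mu(x)=\int_0^\infty G_d(t,r)\sinh^{d-1}(r)\d r\int_{\S^{d-1}}[\cosh r_y-\sinh r_y\langle\theta,\theta_y\rangle]^{-(d-1)}\d\theta .
\]
The spherical integral equals $|\S^{d-1}|$: this is the Poisson-kernel identity for $\H^d$, since $\varphi(y,\cdot)$ is the Jacobian of the boundary map induced by the isometry sending $\mathcal{O}$ to $y$. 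Because $\int_{\H^d}P_d=1$, we get $\|\varphi(y,\cdot)P_d(t,\cdot)\|_{L^1}=1=\|P_d^y(t,\cdot)\|_{L^1}$.

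For equal-mass nonnegative functions, $\|f-g\|_{L^1}=2\int(g-f)_+$. I therefore write
\[
\|P_d^y-\varphi P_d\|_{L^1}=2\int_{\H^d}\Big(1-\frac{P_d^y}{\varphi P_d}\Big)_+\varphi P_d\d\mu .
\]
The integrand is bounded by $\varphi P_d\le \|\varphi(y,\cdot)\|_\infty P_d$, and $\|\varphi(y,\cdot)\|_\infty\le e^{(d-1)r_y}$. It then suffices to show that the ratio $P_d^y(t,x)/(\varphi(y,\theta_x)P_d(t,x))$ tends to $1$ uniformly on the annulus $A_t=\{|r_x-(d-1)t|\le L\sqrt t\}$ for each fixed $L$. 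It also suffices that $P_d(t,\cdot)$ puts mass at most $\epsilon(L)$ outside $A_t$, with $\epsilon(L)\to0$ as $L\to\infty$. The latter follows from the two-sided bound \eqref{eq:bound_heat_kernel_in_terms_of_h}: with respect to $\sinh^{d-1}(r)\d r$, this bound makes $G_d$ comparable to $t^{-1/2}$ times a Gaussian in $r-(d-1)t$ of variance $\sim t$, at least in the bulk. Alternatively, it follows from the radial result, Corollary~\ref{cor:L^1_radially_symmetric_asymptotic} applied to $G_d$. Letting $t\to\infty$ and then $L\to\infty$ gives the claim.

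The main obstacle is the uniform ratio convergence on $A_t$, i.e. upgrading \eqref{eq:limitQuotient} from $\ell\in(-l\sqrt t,l\sqrt t)$ with $l<1$ to a window of width $L\sqrt t$ with $L$ arbitrary but fixed. I would use the sharp asymptotics $G_d(t,r)\sim\gamma(r/2t)t^{-3/2}re^{-(r+(d-1)t)^2/(4t)}$ from \cite{Anker-Ji-1999}, as in \cite[Proposition A.1]{LM}. Set $\lambda=\lambda(x,y)$ as in \eqref{eq:formula_lambda}. For large $r_x$,
\[
\lambda=r_x+\log\big(\cosh r_y-\sinh r_y\langle\theta_x,\theta_y\rangle\big)+O(e^{-2r_x}),
\]
uniformly in $\theta_x$. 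Denote the logarithmic term by $\delta(\theta_x)$, which is bounded by $r_y$. Then
\[
\frac{(\lambda+(d-1)t)^2-(r_x+(d-1)t)^2}{4t}=\frac{\delta\,(2r_x+2(d-1)t)}{4t}+O(1/t)\to(d-1)\delta
\]
uniformly on $A_t$. The prefactors $\gamma(\lambda/2t)\lambda$ and $\gamma(r_x/2t)r_x$ have ratio tending to one there, by continuity of $\gamma$ and because $\lambda-r_x$ is bounded. Hence $P_d^y/P_d\to e^{-(d-1)\delta}=\varphi(y,\theta_x)$ uniformly on $A_t$. The only delicate point is to confirm that the Anker–Ji asymptotics hold uniformly for $r/t$ in a compact neighbourhood of $d-1$; I would cite that uniformity directly rather than reprove it.
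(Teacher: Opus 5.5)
Your argument is correct. It follows the route the paper only sketches: the paper leaves this proposition as an exercise, pointing to \cite[Proposition A.1]{LM} and Proposition~\ref{propVarPhi}. That route is convergence of the ratio $P_d^y/P_d$ to $\varphi$ in the bulk $r_x\approx(d-1)t$, combined with concentration of the mass of $P_d$ there. Your version goes beyond the hint in three places.

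\textbf{The window.} The window in \eqref{eq:limitQuotient} and Proposition~\ref{propVarPhi} is $|r_x-(d-1)t|<l\sqrt t$ with $l<1$. It carries only about half of the mass of $P_d(t,\cdot)$, whose radial profile is roughly a Gaussian of variance $2t$. Pointwise convergence there therefore cannot by itself give $L^1$ convergence. Your uniform convergence on $|r_x-(d-1)t|\le L\sqrt t$ for every fixed $L$ is exactly what is needed. Your computation also shows the restriction $l<1$ is inessential: only $r_x/t\to d-1$ matters.

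\textbf{Scheffé plus the Poisson-kernel identity.} Combining Scheffé's identity with $\int_{\S^{d-1}}\varphi(y,\theta)\d\theta=|\S^{d-1}|$ means you never estimate $P_d^y$ off the annulus. One could also bound that piece directly. Since $|\lambda(x,y)-r_x|\le r_y$, the complement of $A_t$ lies in the complement of the annulus $\{|\lambda(x,y)-(d-1)t|\le L\sqrt t-r_y\}$ centred at $y$.

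\textbf{All dimensions.} Using the Anker--Ji asymptotics covers every $d\ge 2$, whereas Proposition~\ref{propVarPhi} is only carried out for $d\in\{2,3\}$.

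The uniformity you flag is not really an extra assumption. Read as a joint limit as $t,r\to\infty$, which is how the paper states it, the asymptotics are uniform on $\{t\ge T,\ r\ge R\}$. On your annulus, both $r_x$ and $\lambda(x,y)\ge r_x-r_y$ exceed any fixed $R$ for large $t$, and $r/2t$ stays in a compact set near $(d-1)/2$ where $\gamma$ is continuous and positive. For odd $d$ this can also be checked by hand from the explicit $G_3$ and \eqref{eq:recurrence_derivative_kernels}.
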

This result suggests that the asymptotic profile of a general solution $u$ is going to be given by the product of the fundamental solution centred at the origin, $P_d(t,x)=G_d(t,r_x)$, and the weighted directional mass
\begin{equation}\label{eq:def_Phi}
\Phi(\theta_x):= \int_{\H^d} u_0(y)\, \varphi(y,\theta_x)\ \d\mu(y).
\end{equation}
In other words, we would like to prove that
\[
\lim\limits_{t\to\infty}\|u(t,x)-\Phi(\theta_x)G_d(t,r_x)\|_{L^1(\H^d)}=0.
\]

However, note that the function $\Phi(\theta_x)$ may not even be defined, since $\varphi(y,\theta_x)$ is not uniformly bounded. Indeed, the optimal bound for $\varphi$ that can be attained is
\begin{equation}\label{eq:bound_varphi}
\varphi(y,\theta_x)\leq \left[ \cosh (r_y)-\sinh (r_y)  \right]^{-(d-1) }= e^{(d-1) r_y}
\end{equation}
and this bound is attained whenever $\theta_x=\theta_y$. Therefore, we need the integral in~\eqref{eq:def_Phi} to be well defined and the function $\Phi(\theta_x)G_d(t,x)$ to be integrable. Since
\[
\begin{aligned}
\|\Phi(\theta_x)G_d(t,x)\|_{L^1(\H^d)}& = \left(\int_{\mathbb{S}^{d-1}}\Phi(\theta_x)\ \d \theta_x\right) \cdot \int_0^\infty G_d(t,r_x)\, \sinh^{d-1}(r_x)\ \d r_x\\[10pt] &=\frac{1}{|\mathbb{S}^{d-1}|} \int_{\mathbb{S}^{d-1}}\Phi(\theta_x)\ \d \theta_x
\end{aligned}
\]
we need to assume that $\Phi\in L^1(\mathbb{S}^{d-1})$. Having this one obtains the following theorem. Its proof is left to the reader as an exercise. It also corresponds to~\eqref{eq:convergence_papageorgiou} in the case $p=1$.

\begin{thm}\label{thm:convergence_without_rate}
	Let $u$ be the solution to problem~\eqref{eq:laplace_beltrami_polar_coordinates_general data} for a non-negative initial datum $u_0\in L^1(\H^d)$ such that $\Phi \in L^1(\mathbb{S}^{d-1})$, defined as in~\eqref{eq:def_Phi}. Then
	\[
	\lim\limits_{t\to\infty}\|u(t,x)-\Phi(\theta_x)P_d (t,x)\|_{L^1(\H^d)}=0.
	\]
\end{thm}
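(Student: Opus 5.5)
Starting from the representation $u(t,x)=\int_{\H^d}u_0(y)P_d^y(t,x)\d\mu(y)$ and the definition of $\Phi$, we write the difference as
\[
u(t,x)-\Phi(\theta_x)P_d(t,x)=\int_{\H^d}u_0(y)\bigl[P_d^y(t,x)-\varphi(y,\theta_x)P_d(t,x)\bigr]\d\mu(y).
\]
Minkowski's integral inequality then gives
\[
\|u(t,\cdot)-\Phi P_d(t,\cdot)\|_{L^1(\H^d)}\leq \int_{\H^d}u_0(y)\, g_t(y)\d\mu(y),\qquad g_t(y):=\bigl\|P_d^y(t,\cdot)-\varphi(y,\cdot)P_d(t,\cdot)\bigr\|_{L^1(\H^d)}.
\]
Proposition~\ref{prp:convergence_kernels_varphi} gives $g_t(y)\to0$ for every fixed $y$. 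It therefore suffices to justify passing the limit inside the $y$-integral, and this is the main obstacle. The natural bound is $g_t(y)\leq 1+\|\varphi(y,\cdot)P_d(t,\cdot)\|_{L^1}=1+\frac{1}{|\S^{d-1}|}\int_{\S^{d-1}}\varphi(y,\theta)\d\theta$, which is independent of $t$. However, it is not obviously $u_0$-integrable under the hypothesis $\Phi\in L^1(\S^{d-1})$ alone. Note that $\int u_0(y)\int_{\S^{d-1}}\varphi(y,\theta)\d\theta\d\mu(y)=\|\Phi\|_{L^1(\S^{d-1})}<\infty$ by Tonelli. So the dominating function $u_0(y)\bigl(1+|\S^{d-1}|^{-1}\int\varphi(y,\theta)\d\theta\bigr)$ is integrable precisely because $\Phi\in L^1(\S^{d-1})$ and $u_0\in L^1$. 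Dominated convergence then applies to $\int u_0 g_t\d\mu$, and the theorem follows.

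Two points need checking along the way. First, Tonelli and Minkowski require measurability and nonnegativity, which hold since $u_0\geq0$ and $\varphi, P_d^y$ are continuous and positive. Second, we must confirm the identity $\|\varphi(y,\cdot)P_d(t,\cdot)\|_{L^1}=|\S^{d-1}|^{-1}\int_{\S^{d-1}}\varphi(y,\theta)\d\theta$. This follows from the product structure in polar coordinates together with $\int_0^\infty G_d(t,r)\sinh^{d-1}(r)\d r=|\S^{d-1}|^{-1}$, as computed just before the statement.

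If one prefers to avoid relying on Proposition~\ref{prp:convergence_kernels_varphi} as a black box, an alternative route is a truncation argument. Split $u_0=u_0\1_{B(\mathcal{O},R)}+u_0\1_{B(\mathcal{O},R)^c}$. On the compact part, $\varphi(y,\cdot)\leq e^{(d-1)R}$ by~\eqref{eq:bound_varphi}, and the convergence in Proposition~\ref{prp:convergence_kernels_varphi} can be made uniform in $y\in B(\mathcal{O},R)$ by continuity of $y\mapsto P_d^y(t,\cdot)$ in $L^1$. The tail is controlled by $\int_{B^c}u_0(1+|\S^{d-1}|^{-1}\int\varphi\d\theta)\d\mu$, which is small for $R$ large by the same integrability. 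I would present the dominated convergence version, since it is shorter.
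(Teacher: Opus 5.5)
Your argument is correct and is essentially the proof the paper intends: it leaves this theorem as an exercise, right after Proposition~\ref{prp:convergence_kernels_varphi} and the computation $\|\Phi P_d(t,\cdot)\|_{L^1(\H^d)}=|\S^{d-1}|^{-1}\int_{\S^{d-1}}\Phi$, and your route is Minkowski's inequality followed by dominated convergence with $u_0(y)\bigl(1+|\S^{d-1}|^{-1}\int_{\S^{d-1}}\varphi(y,\theta)\d\theta\bigr)$ as dominating function. As a side remark, $\varphi(y,\cdot)$ is the hyperbolic Poisson kernel, so $|\S^{d-1}|^{-1}\int_{\S^{d-1}}\varphi(y,\theta)\d\theta=1$ for every $y$; your dominating function is therefore just $2u_0$, and the hypothesis $\Phi\in L^1(\S^{d-1})$ holds automatically, with $\|\Phi\|_{L^1(\S^{d-1})}=|\S^{d-1}|\,\|u_0\|_{L^1(\H^d)}$.
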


\begin{rem}\label{rem:varphi_continuous_phi_bounded}
	Since $\varphi$ is a continuous function, a sufficient condition for $\Phi \in L^\infty(\mathbb{S}^{d-1})\subset L^1(\mathbb{S}^{d-1})$ is that $u_0$ has compact support.
\end{rem}

Following~\eqref{eq:bound_varphi}, a sufficient condition  for $\Phi \in L^1(\mathbb{S}^{d-1})$ is that
\begin{equation}\label{eq:condition_exponential_initial_datum}
\int_{\H^d} u_0(y)\, e^{(d-1) r_y}\ \d\mu(y) = \int_{\mathbb{S}^{d-1}}\int_0^\infty u_0(r_y, \theta_y)\, e^{(d-1) r_y}\, \sinh^{d-1}(r_y)\ \d r_y\ \d \theta_y\leq C<\infty,
\end{equation}
for a certain positive constant $C=C(u_0)$. Notice how this condition matches the one in~\eqref{eq:condition_papgeorgiiou} for $\Phi_1$. With this at hand, the following lemma unfolds.

\begin{lem}\label{lem:boundedness_of_Phi}
Let $u$ be the solution to problem~\eqref{eq:laplace_beltrami_polar_coordinates_general data} for a non-negative initial datum $u_0\in L^1(\H^d)$ such that condition~\eqref{eq:condition_exponential_initial_datum} is satisfied. Then $\Phi(\theta_x)$ is uniformly bounded.
\end{lem}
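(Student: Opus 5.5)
The plan is to bound $\varphi(y,\theta_x)$ pointwise, uniformly in $\theta_x$, by a function of $r_y$ alone, and then integrate against $u_0$. Write $\varphi(y,\theta_x)=[\cosh(r_y)-\sinh(r_y)\langle\theta_x,\theta_y\rangle]^{-(d-1)}$.

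By Cauchy--Schwarz, $\langle\theta_x,\theta_y\rangle\le 1$ for unit vectors. Since $\sinh(r_y)\ge 0$, this gives
\[
\cosh(r_y)-\sinh(r_y)\langle\theta_x,\theta_y\rangle\ \ge\ \cosh(r_y)-\sinh(r_y)=e^{-r_y}>0 .
\]
The base is therefore strictly positive, and the exponent $-(d-1)$ is negative. Hence $\varphi(y,\theta_x)\le e^{(d-1)r_y}$ for every $y\in\H^d$ and every $\theta_x\in\S^{d-1}$. This is exactly bound~\eqref{eq:bound_varphi}. At the origin, $r_y=0$, both sides equal $1$, so the pole causes no problem.

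Next, since $u_0\ge 0$, we can integrate this bound against $u_0\,\d\mu$. For every $\theta_x\in\S^{d-1}$,
\[
0\le \Phi(\theta_x)=\int_{\H^d}u_0(y)\varphi(y,\theta_x)\d\mu(y)\le \int_{\H^d}u_0(y)e^{(d-1)r_y}\d\mu(y)\le C(u_0),
\]
using condition~\eqref{eq:condition_exponential_initial_datum} in polar coordinates. The integral defining $\Phi$ is well defined as an integral of a nonnegative measurable function, and it is finite. Hence $\|\Phi\|_{L^\infty(\S^{d-1})}\le C(u_0)$. Since $\S^{d-1}$ has finite measure, this also gives $\Phi\in L^1(\S^{d-1})$, so Theorem~\ref{thm:convergence_without_rate} applies.

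There is essentially no obstacle here. The only points to check are the sign of the base, which guarantees that the negative power reverses the inequality correctly, and that the bound is uniform in $\theta_x$, which holds because the right-hand side depends only on $r_y$.
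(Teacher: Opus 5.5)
Your proposal is correct and follows the paper's own argument: the pointwise bound $\varphi(y,\theta_x)\le e^{(d-1)r_y}$ from \eqref{eq:bound_varphi}, integrated against $u_0\,\d\mu$ and combined with condition~\eqref{eq:condition_exponential_initial_datum}, gives $\Phi(\theta_x)\le C(u_0)$ for every $\theta_x\in\S^{d-1}$. Your explicit check that $\cosh(r_y)-\sinh(r_y)\langle\theta_x,\theta_y\rangle\ge e^{-r_y}>0$ supplies the justification that the paper leaves implicit.
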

\section{Convergence with rates of non-symmetric solutions}\label{sect:convergence_integrable_with_rates}
In this section we improve on the results in~\cite{Papageorgiou2025} by studying the asymptotic behaviour of solutions to the heat equation with general compactly supported initial data using entropy methods, in the spirit of Section \ref{sect:convergence_symmetries}.
%Later, in Appendix \ref{sec:L1Data}, we generalize the results obtained here to the case \( u_0 \in L^1(\mathbb{H}^d) \) with suitable decay, but for simplicity we restrict ourselves to the compactly supported case in this section.

\subsection{Entropy for general solutions}

Let us define
\[
v(\tau, \rho, \theta):=e^\frac{\tau}{2}\cdot\sinh^{d-1}\big(r(\tau,\rho)\big)\cdot u\big(t(\tau), r(\tau,\rho), \theta\big),
\]
the function $V(\tau, \rho)$ as in~\eqref{eq:def_V} but this time choosing $C(\tau)$ such that $\int V=1$, and the function
\[
\mathcal{N}(\tau, \theta) := \int_{\rho_0(\tau)}^\infty v(\tau, \rho, \theta) \d \rho.
\]
Then, following~\eqref{eq:laplace_beltrami_polar_coordinates}, the function  $v$ satisfies
\begin{equation}\label{eq:main_spherical_coordinates_general_datum}\notag
	\begin{cases}
		v_\tau = \partial_\rho\left(v\cdot\partial_\rho\left[\ln\left(\frac{v}{\mathcal{N}\cdot V}\right)\right]\right) +\frac{e^\tau}{\sinh^{2}(r)}\Delta_{\mathbb{S}^{d-1}}v ,\quad &\rho\geq \rho_0(\tau),\ \tau>0,\\
		v(\tau, \rho_0(\tau))=0, &\tau>0,\\
		v(0,\cdot) = v_0, &\rho\geq \rho_0(\tau).
	\end{cases}
\end{equation}
Notice how this equation has a spherical term involving $\Delta_{\mathbb{S}^{d-1}}v$, hence the need to introduce the \textit{directional mass} function $\mathcal{N}$. On the other hand, since this term is multiplied by a factor $\frac{e^\tau}{\sinh^{2}(r)}$, we expect it to be small enough at regions of the form $r\sim (d-1)t$ (where most of the mass is located) so that $L^1$ convergence retains the same speed $t^{-1/2}$.

We define then the entropy
\begin{equation}\label{eq:def_entropy_general_data}\notag
	H(\tau):= \int_{\mathbb{S}^{d-1}}\int_{\rho_0(\tau)}^\infty v\cdot\ln\left(\frac{v}{\mathcal{N}\cdot V}\right)\ \d \rho\ \d \theta,
\end{equation}
and we can compute, as in the previous sections,
\[
\begin{aligned}
	\partial_\tau H(\tau) & =  - \int_{\mathbb{S}^{d-1}}\int_{\rho_0(\tau)}^\infty v\cdot\frac{V_\tau}{V} - \int_{\mathbb{S}^{d-1}}\int_{\rho_0(\tau)}^\infty v\cdot\frac{\mathcal{N}_\tau}{\mathcal{N}}  + \int_{\mathbb{S}^{d-1}}\int_{\rho_0(\tau)}^\infty v_\tau \cdot\ln\left(\frac{v}{\mathcal{N}\cdot V}\right)\\[10pt]
	&=  - \int_{\mathbb{S}^{d-1}}\int_{\rho_0(\tau)}^\infty v\cdot\frac{V_\tau}{V} - \int_{\mathbb{S}^{d-1}}\mathcal{N}_\tau - \int_{\mathbb{S}^{d-1}}\int_{\rho_0(\tau)}^\infty v\left|\partial_\rho \cdot\ln\left(\frac{v}{\mathcal{N}\cdot V}\right)\right|^2\\[10pt]
	& \ + \int_{\mathbb{S}^{d-1}}\int_{\rho_0(\tau)}^\infty \frac{e^\tau}{\sinh^{2}(r)}\Delta_{\mathbb{S}^{d-1}}v \cdot\ln\left(\frac{v}{\mathcal{N}\cdot V}\right).
\end{aligned}
\]
Let us estimate some of the terms in the right-hand side. We can use that, after some computations,
\[
\mathcal{N}_\tau = \int_{\rho_0(\tau)}^\infty \frac{e^\tau}{\sinh^{2}(r)} \Delta_{\mathbb{S}^{d-1}}v(\tau, \rho, \theta) \d \rho,
\]
which implies that
\[
\int_{\mathbb{S}^{d-1}}\mathcal{N}_\tau=0.
\]
On the other hand, integrating by parts and using the periodicity of $v$ on the directional variable $\theta$, we get to
\[
\begin{aligned}
	\int_{\mathbb{S}^{d-1}}\int_{\rho_0(\tau)}^\infty& \frac{e^\tau}{\sinh^{2}(r)}\Delta_{\mathbb{S}^{d-1}}v \cdot\ln\left(\frac{v}{\mathcal{N}\cdot V}\right) = \int_{\rho_0(\tau)}^\infty \frac{e^\tau}{\sinh^{2}(r)}\int_{\mathbb{S}^{d-1}}\Delta_{\mathbb{S}^{d-1}}v \cdot (\ln\left(v\right) - \ln\left(\mathcal{N}\right))\\[10pt]
	& = \int_{\rho_0(\tau)}^\infty \frac{e^\tau}{\sinh^{2}(r)}\int_{\mathbb{S}^{d-1}}v \cdot \left(\frac{\Delta_{\mathbb{S}^{d-1}} v}{v} - \frac{|\nabla_{\mathbb{S}^{d-1}}v|^2}{v^2}\right) -\Delta_{\mathbb{S}^{d-1}}v\, \ln(\mathcal{N}) \\[10pt]
	& \leq -\int_{\rho_0(\tau)}^\infty \frac{e^\tau}{\sinh^{2}(r)}\int_{\mathbb{S}^{d-1}}\Delta_{\mathbb{S}^{d-1}}v\, \ln(\mathcal{N}) = -\int_{\mathbb{S}^{d-1}}\mathcal{N}_\tau\, \ln(\mathcal{N})  
\end{aligned}
\]

Now, this directional mass distribution $\mathcal{N}$ makes it so the equality
\[
\int_{\rho_0(\tau)}^\infty v(\tau, \rho, \theta) \d \rho = \int_{\rho_0(\tau)}^\infty \mathcal{N}(\tau, \theta) V(\tau,\rho) \d \rho
\]
holds, in order to apply log-Sobolev inequalities. Putting all this together yields
\[
\begin{aligned}
	\partial_\tau H(\tau) & \leq   - \int_{\mathbb{S}^{d-1}}\int_{\rho_0(\tau)}^\infty v\cdot\frac{V_\tau}{V} -\int_{\mathbb{S}^{d-1}}\mathcal{N}_\tau\, \ln(\mathcal{N})   - \lambda H(\tau).
\end{aligned}
\]

Similarly as in Section~\ref{sect:elliptic}, we have
\[
\int_{\mathbb{S}^{d-1}}\int_{\rho_0(\tau)}^\infty v\cdot\frac{V_\tau}{V}\geq - k_d\ e^{\tau-m_d e^\tau} \int_{\mathbb{S}^{d-1}}\int_{\rho_0(\tau)}^\infty v = - k_d\ \|u_0\|_{L^1(\H^d)}\ e^{\tau-m_d e^\tau},
\]
and therefore,
\begin{equation}\label{eq:entropy_with_N}
	\begin{aligned}
		\partial_\tau H(\tau) & \leq  k_d\ \|u_0\|_{L^1(\H^d)}\ e^{\tau-m_d e^\tau} -\int_{\mathbb{S}^{d-1}}\mathcal{N}_\tau\, \ln(\mathcal{N})   - \lambda H(\tau),
	\end{aligned}
\end{equation}
so in order to conclude the convergence of the relative entropy we need a good estimate on $\mathcal{N}$ and $\partial_\tau \mathcal{N}$, which we will derive in the next section.

\subsection{The directional mass distribution}\label{sect:directional_mass_distribution}

This subsection is devoted to study the behaviour of the function $\mathcal{N}(\tau,\theta)$.

\begin{prp}\label{lem:bounds_N}
	There exists a positive constant $C_1(u_0, d)$ depending only on the initial datum and the dimension $d$ such that
	\[
	0<C_1(u_0, d)\leq \mathcal{N}.
	\]
\end{prp}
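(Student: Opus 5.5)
Since $v=e^{\tau/2}\sinh^{d-1}(r)\,u$ and $\d\rho=e^{-\tau/2}\d r$, the directional mass can be rewritten in the original variables as
\[
\mathcal{N}(\tau,\theta)=\int_0^\infty u(t,r,\theta)\,\sinh^{d-1}(r)\,\d r,\qquad t=e^\tau-1.
\]
The plan is to bound this from below using the representation formula for $u$ together with pointwise lower bounds on the heat kernel. There are three steps.

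\textbf{Step 1: representation and monotonicity in the initial datum.} Write
\[
u(t,x)=\int_{\H^d}u_0(y)\,G_d(t,\lambda(x,y))\,\d\mu(y).
\]
Since $u_0\ge0$, $u_0\not\equiv0$, there is a compact set $K\subset\H^d$ on which $u_0$ carries positive mass $m_K>0$. The main case of interest is compactly supported data, but any $L^1$ datum can be truncated to $K$, and this only decreases $u$.

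Let $R_K=\sup_{y\in K}r_y$. The triangle inequality gives $\lambda(x,y)\le r_x+R_K$. Moreover, \eqref{eq:formula_lambda} shows that $\lambda(x,y)\ge r_x-R_K$.

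\textbf{Step 2: a kernel comparison uniform in $\theta_x$.} From \eqref{eq:bound_heat_kernel_in_terms_of_h}, $G_d(t,\lambda)\ge c_d\,h_d(t,\lambda)$. I will compare $h_d(t,\lambda)$ with $h_d(t,r_x)$ for $|\lambda-r_x|\le R_K$. The ratio of the polynomial factors is bounded below by a constant depending on $R_K$. For the Gaussian factor,
\[
\frac{(\lambda+(d-1)t)^2-(r_x+(d-1)t)^2}{4t}\le \frac{R_K\,\bigl(2r_x+2(d-1)t+R_K\bigr)}{4t}.
\]
This is uniformly bounded only when $r_x\lesssim t$. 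So I restrict to the region $r_x\in[(d-1)t-\sqrt t,\,(d-1)t+\sqrt t]$, where the exponent is at most $C(R_K,d)$. This gives, on that region,
\[
G_d(t,\lambda(x,y))\ge c(K,d)\,G_d(t,r_x)\quad\text{for all }y\in K,\ \theta_x\in\S^{d-1}.
\]
Here it is essential that the constant does not depend on $\theta_x$, and this comes out for free because only $|\lambda-r_x|\le R_K$ is used. As an alternative route, Proposition~\ref{prp:convergence_kernels_varphi} and the fact that $\varphi(y,\theta)\ge e^{-(d-1)r_y}$ suggest a constant close to $e^{-(d-1)R_K}$. However, that result only gives $L^1$ convergence, not a pointwise bound, so the direct comparison above is safer.

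\textbf{Step 3: integrate in $r$.} Using Step 2,
\[
\mathcal{N}\ge c(K,d)\,m_K\int_{(d-1)t-\sqrt t}^{(d-1)t+\sqrt t}G_d(t,r)\sinh^{d-1}(r)\,\d r .
\]
By Corollary~\ref{cor:L^1_radially_symmetric_asymptotic} applied to $G_d$ (equivalently, by the Gaussian profile $C_\infty\Gamma$), the radial mass of $G_d$ in this window tends to a positive fraction of its total mass $1/|\S^{d-1}|$. Hence it is bounded below for $t\ge t_1$.

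For $t\in[0,t_1]$, i.e.\ for bounded $\tau$, I need a separate argument. The natural one is that $\mathcal{N}(0,\theta)$ is given by the directional mass of $u_0$ and may vanish, so I will take positivity for $\tau\ge\tau_1>0$ by strict positivity of the heat kernel and continuity. A uniform-in-$\theta$ positive bound on a compact time range $[\tau_1,\log(1+t_1)]$ then follows from joint continuity and positivity of $\int_0^\infty u\,\sinh^{d-1}\,\d r$.

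\textbf{Main obstacle.} The main obstacle is Step 2, namely securing the uniform-in-$\theta$ constant in the critical region. There is also a secondary difficulty at $\tau$ near $0$, where the bound can only hold after an initial time shift. I would state the result for $\tau\ge\tau_1$, or time-translate the datum as is done with $H(G_d)$ in Section~\ref{sect:elliptic}.
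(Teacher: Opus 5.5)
Your argument is correct, but it follows a different route from the paper. The paper's proof is a one-line comparison from below. If $w_0\le u_0$ is a nontrivial compactly supported datum that is radial about $\mathcal{O}$, then $u\ge w$ by positivity of the heat semigroup. Since $w$ is radial, its directional mass $\int_0^\infty w(t,r)\sinh^{d-1}(r)\,\d r$ does not depend on $\theta$. In fact it equals the conserved radial mass of $w_0$; the paper cites Theorem~\ref{thm:L^1_radially_symmetric_asymptotic}, but conservation of mass already suffices. Uniformity in $\theta$ therefore comes from the symmetry of the minorant, with no kernel asymptotics, and the bound holds for all times at which the comparison holds.

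You instead compare kernels pointwise, $G_d(t,\lambda(x,y))\ge c(K,d)\,G_d(t,r_x)$, on the critical window. This needs the two-sided bound \eqref{eq:bound_heat_kernel_in_terms_of_h}, works only for large $t$, and forces a separate treatment of bounded times. In exchange, you need no radial minorant of $u_0$, and you get an explicit constant of order $m_K e^{-(d-1)R_K}$, consistent with $\varphi\ge e^{-(d-1)r_y}$.

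Your caveat about small $\tau$ is correct, and it applies to the paper's statement as well. $\mathcal{N}(0,\theta)$ is the directional mass of $u_0$, which vanishes on an open set of directions if, say, $u_0$ is supported in a small ball away from $\mathcal{O}$. For such data no nontrivial radial $w_0\le u_0$ exists either. So the paper's comparison also has to be started at some $t_*>0$, where $u(t_*,\cdot)$ is bounded below by a positive constant on $B(\mathcal{O},1)$.

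The weakest point of your proposal is the appeal to joint continuity of $\mathcal{N}$ on the compact time range, which would need a domination argument in $r$. You can bypass it with the same radial idea. Since $\partial_r G_d<0$ by \eqref{eq:recurrence_derivative_kernels} and $\lambda(x,y)\le r_x+R_K$, you have $u(t,x)\ge m_K\,G_d(t,r_x+R_K)$. Hence $\mathcal{N}(\tau,\theta)\ge m_K\int_0^\infty G_d(t,r+R_K)\sinh^{d-1}(r)\,\d r$, which is a positive, $\theta$-independent, continuous function of $t>0$. Your window estimate is then only needed to keep this quantity away from zero as $t\to\infty$.
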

\begin{proof}
	A comparison argument by below with a radially symmetric initial datum with compact support and Theorem~\ref{thm:L^1_radially_symmetric_asymptotic} yield the bound for $\mathcal{N}$.
\end{proof}

%Next, we estimate $\partial_\tau \mathcal{N}$.
%In order to do so we will need two hypothesis. Let us assume that there exist a couple of constants $K_1, K_2$ depending only on the dimension and on $u_0$ such that
%\begin{equation}\label{eq:condition_convergnece_N}
%	\begin{aligned}
	%	&\int_0^\infty \int_{\mathbb{S}^{d-1}}\int_{0}^\infty u_0(r_y, \theta_y)\, t^{-\frac{3}{2}}e^{-\frac{\lambda^2}{4t} - \frac{(d+2)\lambda}{2}}\, \sinh^{d+1}(r_y) \, \sinh^{d-1}(r_x)\,\d r_y\,\d \theta_y  \, \d r_x\leq K_1(d, u_0),\\[10pt]
	%	&\int_0^\infty \int_{\mathbb{S}^{d-1}}\int_{0}^\infty u_0(r_y, \theta_y)\, t^{-\frac{3}{2}}e^{-\frac{\lambda^2}{4t} - \frac{d\lambda}{2}}\, \sinh^{d}(r_y) \, \sinh^{d-2}(r_x)\,\d r_y\,\d \theta_y  \, \d r_x\leq K_2(d, u_0).
	%	\end{aligned}
%\end{equation}
%
%\ag{IT IS NOT ENTIRELY CLEAR WHETHER THIS IS OPTIMAL FOR OUR PURPOSES OR NOT, see the proof below. ALSO, I THINK THAT $u_0$ 	with compact support is a sufficient condition for the previous hypothesis to be true, but we would have to prove it}

Next, we estimate $\partial_\tau \mathcal{N}$. It is more convenient to study it in terms of the original variables, except for the time variable for which we simply write $t=t(\tau)\sim e^\tau$ when needed. At a point $\theta_x\in\mathbb{S}^{d-1}$,
\begin{equation}\label{eq:derivative_N_in_time}
	\partial_\tau \mathcal{N}(\tau,\theta_x) = e^\tau \int_0^\infty \sinh^{d-3}(r_x) \Delta_{\mathbb{S}^{d-1}, \theta_x} u(t,r_x, \theta_x)\ \d r_x.
\end{equation}
The first step is to study then $\Delta_{\mathbb{S}^{d-1}, \theta_x} u$.	Since the solution $u$ satisfies $u=\int u_0\, G_d^y\, \d \mu (y)$, then
\begin{equation}\label{eq:spherical_laplacian_of_u_as_a_convolution}
	\Delta_{\mathbb{S}^{d-1}, \theta_x} u(t,r_x, \theta_x) = \int_{\mathbb{S}^{d-1}}\int_{0}^\infty u_0(r_y, \theta_y)\, [\Delta_{\mathbb{S}^{d-1}, \theta_x} G_d(t,\lambda(x,y))]\, \sinh^{d-1}(r_y)\ \d r_y\ \d \theta_y
\end{equation}
where $\lambda(x,y)$ is defined as in~\eqref{eq:formula_lambda}. In order to compute $\Delta_{\mathbb{S}^{d-1}, \theta_x} G_d$, we fix $y \in \mathbb{H}^d$ and thus we define 
\[
s (\theta) := \textup{dist}_{\mathbb{S}^{d-1}} (\theta, \theta_y), \quad \forall \theta \in \mathbb{S}^{d-1}.
\]
Then, since we have that $\lambda = \lambda (r_x, s(\theta_x))$ when $y$ is fixed, we have that 
\[
\Delta_{\mathbb{S}^{d-1}, \theta_x} G_d (t, \lambda) = \partial_s G_d \Delta_{\mathbb{S}^{d-1}} s + \partial_{ss} G_d | \nabla_{\mathbb{S}^{d-1}} s|^2,
\]
and since
\[
\Delta_{\mathbb{S}^{d-1}} s = (d-1) \cot (x) \quad \textup{and} \quad | \nabla_{\mathbb{S}^{d-1}} s|^2 =1,
\]
we conclude that
\[
\Delta_{\mathbb{S}^{d-1}, \theta_x} G_d= (d-1) \cot (s) \partial_s G_d  + \partial_{ss} G_d.
\]
See~\cite{petersen2016riemannian} for a proof of these formulas. Now, taking into account that 
\[
\partial_s G_d = \partial_\lambda G_d \partial_s \lambda , \quad \partial_{ss} G_d = \partial_{\lambda \lambda} G_d (\partial_s \lambda)^2+\partial_\lambda G_d \partial_{ss} \lambda,
\]
and the recurrence formula \eqref{eq:recurrence_derivative_kernels}, we get after simplifying that
\[
\Delta_{\mathbb{S}^{d-1}, \theta_x} G_d = 4 \pi^2 e^{2(d+1)t}\sinh (r_x)^2 \sinh (r_y)^2 \sin (s)^2 G_{d+4}-2 \pi d e^{dt}\sinh (r_x) \sinh (r_y) \cos (s) G_{d+2}.
\]
Substituting this equation into~\eqref{eq:derivative_N_in_time} and ~\eqref{eq:spherical_laplacian_of_u_as_a_convolution}  we readily get, with $t=t(\tau)\sim e^\tau$,
\begin{equation}\label{eq:formula_for_derivative_N_in_time}
	\partial_\tau \mathcal{N}(\tau,\theta_x) =  4\pi^2 t e^{2(d+1)t}\cdot \mathcal{I}-  2\pi d t e^{dt}\cdot\mathcal{II},
\end{equation}
with
\[
\begin{aligned}
	&\mathcal{I} := \int_0^\infty \int_{\mathbb{S}^{d-1}}\int_{0}^\infty u_0(r_y, \theta_y)\, \sinh^{d+1}(r_y)\,\sin (\vartheta)^2\, G_{d+4}(t,\lambda(x,y))\, \sinh^{d-1}(r_x)\,\d r_y\,\d \theta_y  \, \d r_x,\\[10pt]
	&\mathcal{II} :=\int_0^\infty \int_{\mathbb{S}^{d-1}}\int_{0}^\infty u_0(r_y, \theta_y)\, \sinh^{d}(r_y)\, \cos (\vartheta) \, G_{d+2}(t,\lambda(x,y))\, \sinh^{d-2}(r_x)\,\d r_y\,\d \theta_y  \, \d r_x.
\end{aligned}
\]	
We recall that we are writing, for simplicity, $\vartheta$ as the angle formed by $\theta_x,\theta_y$, i.e.,
\[
\vartheta(\theta_x,\theta_y) = \arccos(\langle\theta_x,\theta_y\rangle)
\]
Let us see how we can bound these two integral terms. We begin, for the sake of explanation if nothing else, with the simpler case where $u_0$ has compact support.
\begin{lem}\label{lem:bound_derivative_N_when_u_0_compact_support}
	Let $u_0\in L^1(\H^d)$ have compact support. Then there exist a positive constant $C$, depending on the diameter of the support of $u_0$, such that
	\[
	\mathcal{I} \leq C\, t^{-1}e^{-\frac{5}{2}\left(d+\frac{1}{2}\right)t}\|u_0\|_{L^1(\H^d)},
	\]
	and
	\[
	\mathcal{II} \leq C\, t^{-1}e^{-\frac{3}{2}\left(d-\frac{1}{2}\right)t}\|u_0\|_{L^1(\H^d)}.
	\]
\end{lem}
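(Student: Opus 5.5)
Since $u_0$ has compact support in some ball $B(\mathcal{O},R)$, every $y$ in the support satisfies $r_y\le R$. The plan is to bound the inner kernels $G_{d+4}(t,\lambda(x,y))$ and $G_{d+2}(t,\lambda(x,y))$ uniformly in $y$ by a function of $r_x$ and $t$ alone, integrate that function in $r_x$, and pull out $\|u_0\|_{L^1(\H^d)}$. The factors that depend only on $y$, namely $\sinh^{d+1}(r_y)/\sinh^{d-1}(r_y)=\sinh^2(r_y)$ for $\mathcal{I}$ and $\sinh(r_y)$ for $\mathcal{II}$ (after rewriting against $\d\mu(y)$), are bounded by $\sinh^2 R$ and $\sinh R$. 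We also use $|\sin\vartheta|,|\cos\vartheta|\le 1$. These bounds go into the constant $C$. Since the triangle inequality gives $\lambda(x,y)\ge r_x-R$ and $\lambda(x,y)\le r_x+R$, the two-sided heat kernel bound \eqref{eq:bound_heat_kernel_in_terms_of_h} lets us replace $G_{d+k}(t,\lambda)$ by $C_{d+k}\,h_{d+k}(t,\lambda)$. We then estimate $h_{d+k}(t,\lambda)$ using $\lambda\in[(r_x-R)_+,r_x+R]$. The polynomial prefactor $(1+\lambda+t)^{\frac{d+k-3}{2}}(1+\lambda)$ is comparable to its value at $r_x$, up to constants depending on $R$. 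The Gaussian factor satisfies $e^{-\frac{(\lambda+(d+k-1)t)^2}{4t}}\le e^{R^2/(4t)}\,e^{R(d+k-1)/2}\,e^{-\frac{(r_x+(d+k-1)t)^2}{4t}}e^{Rr_x/(2t)}$, and the last factor is harmless once combined with the Gaussian decay.

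We are then reduced to one-dimensional integrals of the form
\[
\int_0^\infty t^{-\frac{d+k}{2}}(1+r+t)^{\frac{d+k-3}{2}}(1+r)\,e^{-\frac{(r+(d+k-1)t)^2}{4t}}\,\sinh^{d-1}(r)\,\d r
\]
for $k=4$, and the analogue with $\sinh^{d-2}(r)$ for $k=2$. We bound $\sinh^{j}(r)\le e^{jr}/2^j$ and complete the square:
\[
-\frac{(r+at)^2}{4t}+jr=-\frac{(r+(a-2j)t)^2}{4t}-\Big(\frac{a^2}{4}-\frac{(a-2j)^2}{4}\Big)t .
\]
Here $a=d+k-1$. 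For $\mathcal{I}$ we have $a=d+3$ and $j=d-1$, so $a-2j=5-d$. For $\mathcal{II}$ we have $a=d+1$ and $j=d-2$, so $a-2j=5-d$ as well. This yields an exponential factor $e^{-c_kt}$ with $c_k=j(a-j)$. The remaining Gaussian integral in $r$, together with the polynomial prefactors, gives a power of $t$; with $t^{-\frac{d+k}{2}}$ in front, this should produce the claimed $t^{-1}$ decay or better.

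The last step is to compare the exponent $c_k$ with the prefactors $4\pi^2te^{2(d+1)t}$ and $2\pi d\,te^{dt}$ in \eqref{eq:formula_for_derivative_N_in_time}, and to check that it matches $\tfrac52(d+\tfrac12)$, respectively $\tfrac32(d-\tfrac12)$. I expect this to be the main obstacle. When $a-2j<0$, the Gaussian center lies at positive $r$, so the completed-square integral carries no extra decay, and the polynomial factor $(1+r+t)^{\cdot}$ evaluated near $r\sim t$ contributes additional powers of $t$. These must be tracked carefully, and may force us to accept a slightly weaker exponent, absorbing polynomial losses by using $e^{-\epsilon t}$. If the exponent obtained from this direct computation falls short of the stated one, I would keep the sharper lower bound $\lambda\ge r_x-R$ inside the Gaussian rather than exploiting the upper bound, and split the $r_x$-integral into $r_x\le 2R$ and $r_x>2R$. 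On the compact part, the crude bound $G_{d+k}\lesssim t^{-(d+k)/2}e^{-\lambda_1^{(d+k)}t}$ suffices.
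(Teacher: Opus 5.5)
Your argument works and uses the same ingredients as the paper's proof, with different bookkeeping. Both proofs replace $G_{d+k}$ by $h_{d+k}$ via \eqref{eq:bound_heat_kernel_in_terms_of_h}, bound $\lambda$ from below inside the Gaussian, bound $\sinh^j(r_x)$ by $e^{jr_x}$, complete a square and integrate in $r_x$.

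The paper absorbs the factor $e^{-\frac32\lambda}$ into the polynomial prefactor, leaving $t^{-3/2}$. It uses $r_x\le|r_x-r_y|+r_y$ to trade $e^{(d-1)r_x}$ for $e^{(d-1)|r_x-r_y|}e^{(d-1)r_y}$, and completes the square in $|r_x-r_y|$ around $(d-2)t$. This lands exactly on $\frac{(d+3)^2-(d-2)^2}{4}=\frac52(d+\frac12)$ and $\frac{(d+1)^2-(d-2)^2}{4}=\frac32(d-\frac12)$, uniformly in $d$, with $t^{-3/2}\cdot\sqrt t=t^{-1}$. Compact support enters only at the end, through $\int u_0\sinh^{d+1}(r_y)e^{(d-1)r_y}$, which is the form reused in Lemma~\ref{lem:growth_condition_u_0}. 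You use $r_y\le R$ from the start and keep all the Gaussian decay. This gives strictly better rates for $d\ge3$, at the cost of a case split on the sign of $5-d$.

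You stopped at exactly the step that decides the matter, and framed it wrongly. The prefactors $4\pi^2te^{2(d+1)t}$ and $2\pi d\,te^{dt}$ of \eqref{eq:formula_for_derivative_N_in_time} play no role in this lemma. Your exponent need not match the stated one, only dominate it.

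Also, $c_4=4(d-1)$ and $c_2=3(d-2)$ alone are too small for $d=2,3$; for $d=2$, $c_2=0<\frac94$ and $c_4=4<\frac{25}{4}$. When $5-d>0$ you must keep the factor $e^{-(5-d)^2t/4}$ from integrating $e^{-(r+(5-d)t)^2/(4t)}$ over $r\ge0$, rather than reading off only a power of $t$.

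With that factor, the total rate is $\frac{(d+3)^2}{4}$, resp. $\frac{(d+1)^2}{4}$, for $d\le5$, and $4(d-1)$, resp. $3(d-2)$, for $d\ge5$. These exceed the targets by $\frac{(d-2)^2}{4}$ and $\frac{6d-21}{4}$ respectively. For $d\ge3$ the surplus is positive, and it absorbs the polynomial factors. These factors are $t^{-3/2}$ for $d\le4$, $t^{-1/2}$ for $d=5$, and only $O(1)$ for $d\ge6$, so your claim of ``$t^{-1}$ or better'' is false there. For $d=2$ the rates coincide, but the half-line integral concentrates at $r_x=O(1)$ and gives $t^{-3/2}\le t^{-1}$. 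This closes the proof.

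The fallback in your last paragraph is unnecessary, since you already use $\lambda\ge r_x-R$. The crude bound you state there, $t^{-(d+k)/2}e^{-(d+k-1)^2t/4}$, is false as an upper bound for large $t$, because $h_{d+k}(t,O(1))\approx t^{-3/2}e^{-(d+k-1)^2t/4}$. The corrected bound $t^{-3/2}e^{-(d+k-1)^2t/4}$ still suffices.
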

\begin{proof}
	Let us begin with integral $\mathcal{I}$. From~\eqref{eq:bound_heat_kernel_in_terms_of_h} we obtain
	\[
	\mathcal{I} \leq  \int_0^\infty \int_{\mathbb{S}^{d-1}}\int_{0}^\infty u_0(r_y, \theta_y)\, \sinh^{d+1}(r_y)\,\sin^2(\vartheta)\, h_{d+4}(t,\lambda(x,y))\, \sinh^{d-1}(r_x)\,\d r_y\,\d \theta_y  \, \d r_x.
	\]
	Now, let $a>0$. Then, using $\lambda(x,y)\geq |r_x-r_y|$, we get
	\[
	\begin{aligned}
		h_{d+4}(t,\lambda)\, \sinh^{d-1}(r_x)&= (4\pi t)^{-\frac{d+4}{2}}(1+\lambda+t)^{\frac{d+1}{2}}(1+\lambda)e^{-\frac{(\lambda+(d+3)t)^2}{4t}}\, \sinh^{d-1}(r_x)\\[10pt]
		&\lesssim t^{-\frac{3}{2}}\lambda^{\frac{d+3}{2}}e^{-\frac{3}{2}\lambda}\, e^{-\frac{1}{4t}f(r_x, r_y, t)}
	\end{aligned}
	\]
	with
	\[
	f(r_x, r_y, t):=|r_x-r_y|^2+2d\, t \,|r_x-r_y| - 4(d-1)t\, r_x + (d+3)^2t^2.
	\]
	But using that $r_x\leq |r_x-r_y|+r_y$ we get
	\[
	\begin{aligned}
		f(r_x, r_y, t)&\geq |r_x-r_y|^2-2(d-2)\, t \,|r_x-r_y| - 4(d-1)t\, r_y + (d+3)^2t^2\\[10pt]
		&= \big[|r_x-r_y| - (d-2)t\big]^2-4(d-1)t\, r_y -\big[(d-2)^2-(d+3)^2\big]t^2,
	\end{aligned}
	\]
	yielding
	\[
	h_{d+4}(t,\lambda)\, \sinh^{d-1}(r_x)\lesssim t^{-\frac{3}{2}}e^{-\frac{(|r_x-r_y| - (d-2)t)^2}{4t}}\cdot e^{(d-1)r_y}\cdot e^{\frac{(d-2)^2-(d+3)^2}{4}t}.
	\]
	Since
	\[
	\int_0^\infty e^{-\frac{(|r_x-r_y| - (d-2)t)^2}{4t}}\ \d r_x\lesssim \sqrt{t}\quad\text{and}\quad \sin^2(\vartheta)\leq 1
	\]
	we readily obtain
	\[
	\mathcal{I} \lesssim t^{-1}e^{\frac{(d-2)^2-(d+3)^2}{4}t}\int_0^\infty \int_{\mathbb{S}^{d-1}} u_0(r_y, \theta_y)\, \sinh^{d+1}(r_y)\, e^{(d-1)r_y}\,  \d r_y\,\d \theta_y.
	\]
	Using now that the support of $u_0$ is bounded, it is easy to conclude that
	\[
	\mathcal{I} \leq C\, t^{-1}e^{\frac{(d-2)^2-(d+3)^2}{4}t}\|u_0\|_{L^1(\H^d)},
	\]
	where $C$ is a constant that depends on the size of the support and increases as the diameter of the support increases. A similar computation, this time with $h_{d+2}$, yields
	\[
	\mathcal{II} \leq C\, t^{-1}e^{\frac{(d-2)^2-(d+1)^2}{4}t}\|u_0\|_{L^1(\H^d)}.
	\]
\end{proof}
As a consequence of~\eqref{eq:formula_for_derivative_N_in_time}, we readily obtain
\[
|\partial_\tau \mathcal{N}(\tau,\theta_x)|\lesssim e^{-\frac{1}{4}(2d-3)t}.
\]
However, asking for compact support of the initial datum is to constraining for our purposes. Let us see how to get rid of this condition.

We need for the next results sharper estimates for $\lambda(x,y)$. The simplest estimate is $\lambda(x,y)\geq |r_x-r_y|$, attained with equality whenever $\langle\theta_x,\theta_y\rangle = 1$. Next, in the case where $\cos(\vartheta) = \langle\theta_x,\theta_y\rangle \geq 0$ we have
\[
\cosh(r_x)\cosh(r_y)-\sinh(r_x)\sinh(r_y)\cdot \cos(\vartheta) \geq \frac{e^{r_x+r_y}}{4}(1-\cos(\vartheta)) = \frac{e^{r_x+r_y}}{2}\sin^2(\vartheta)
\]
and therefore, using $\text{arccosh}(z)=\ln(z+\sqrt{z^2-1})\geq \ln(z)$ for any $z\geq 0$ we readily obtain
\begin{equation}\label{eq:inferior_bound_lambda_positive_cosine}
	\lambda(x,y)\geq r_x+r_y+\ln\left(\frac{1}{2}\sin^2(\vartheta)\right)\quad\text{if}\quad \vartheta\in \left[-\frac{\pi}{2},\ \frac{\pi}{2}\right].
\end{equation}
Finally, when $\cos(\vartheta) \leq 0$ we have
\[
\cosh(r_x)\cosh(r_y)-\sinh(r_x)\sinh(r_y)\cdot \cos(\vartheta) \geq \frac{e^{r_x+r_y}}{4}
\]
and so
\begin{equation}\label{eq:inferior_bound_lambda_negative_cosine}
	\lambda(x,y)\geq r_x+r_y-\ln\left(4\right)\quad\text{if}\quad \vartheta\in \left[\frac{\pi}{2},\ \frac{3\pi}{2}\right].
\end{equation}

We will also use the following proposition and lemma.
\begin{prp}\label{prp:integral_in_the_sphere_negative_sine}
	Let $\theta_x\in\S^{d-1}$ be fixed and $s\in\R$. Then
	\[
	\int_{\S^{d-1}}|\sin(\vartheta/2)|^s\, \d \theta_y<\infty\quad\text{for all}\quad s>1-d
	\]
	where $\vartheta=\arccos(\langle\theta_x, \theta_y\rangle)$.
\end{prp}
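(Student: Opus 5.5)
We reduce the integral to a one-dimensional one by using that the integrand depends on $\theta_y$ only through the angle $\vartheta$ it forms with the fixed direction $\theta_x$. Take spherical coordinates on $\S^{d-1}$ with north pole $\theta_x$. Then $\vartheta\in[0,\pi]$ is the polar angle, and the round volume form splits as
\[
\d\theta_y=\sin^{d-2}(\vartheta)\,\d\vartheta\,\d\sigma,
\]
where $\d\sigma$ is the volume form of $\S^{d-2}$. For $d=2$ we read this as $\S^1$ parametrised by $\vartheta\in(-\pi,\pi]$ with $\d\theta_y=\d\vartheta$, and $\S^0$ consisting of two points. This gives
\[
\int_{\S^{d-1}}|\sin(\vartheta/2)|^s\,\d\theta_y=|\S^{d-2}|\int_0^\pi |\sin(\vartheta/2)|^s\sin^{d-2}(\vartheta)\,\d\vartheta .
\]
The map $\theta_y\mapsto\vartheta$ is measurable, so Tonelli's theorem justifies this computation for the nonnegative integrand even when the value might be $+\infty$.

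Next we locate the possible singularity. On $[0,\pi]$ we have $\sin(\vartheta/2)\in[0,1]$, and it vanishes only at $\vartheta=0$. So the only place where the integrand can blow up when $s<0$ is near $\vartheta=0$, that is, at $\theta_y=\theta_x$. The factor $\sin^{d-2}(\vartheta)$ also vanishes at $\vartheta=\pi$, but since $d\ge2$ its exponent is nonnegative and it stays bounded there. When $s\ge0$ the integrand is bounded by $1$, and the integral is at most $|\S^{d-1}|$.

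Now handle $s<0$ by splitting $[0,\pi]=[0,\pi/2]\cup[\pi/2,\pi]$. On $[\pi/2,\pi]$ we have $\sin(\vartheta/2)\ge \sin(\pi/4)$, so the integrand is bounded and this piece is finite. On $[0,\pi/2]$ use the elementary bounds $\sin(\vartheta/2)\ge \vartheta/\pi$ and $\sin\vartheta\le\vartheta$. These give the integrand bound
\[
\pi^{-s}\vartheta^{s}\cdot\vartheta^{d-2}=\pi^{-s}\vartheta^{s+d-2}.
\]
Since $s>1-d$ means $s+d-2>-1$, the function $\vartheta^{s+d-2}$ is integrable on $[0,\pi/2]$, and therefore the whole integral is finite.

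There is no real obstacle. The only point needing care is to write the volume element correctly in polar coordinates about $\theta_x$, including the degenerate case $d=2$. We also note that the condition $s>1-d$ is sharp: the integrand behaves like $\vartheta^{s+d-2}$ near $0$ also from below, so the integral diverges for $s\le 1-d$.
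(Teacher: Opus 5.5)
Your proof is correct and complete. The paper states this proposition without proof. Your reduction via polar coordinates about $\theta_x$ to $|\S^{d-2}|\int_0^\pi \sin^{s}(\vartheta/2)\sin^{d-2}(\vartheta)\,\d\vartheta$, with Jordan's inequality $\sin(\vartheta/2)\ge\vartheta/\pi$ controlling the only singularity at $\vartheta=0$, is exactly the standard computation the paper relies on; it is used with $s=1-d+\varepsilon$ in Lemma~\ref{lem:growth_condition_u_0}. Your remark that the range $s>1-d$ is sharp is also correct.
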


\begin{lem}\label{lem:growth_condition_u_0}
	Let $u_0\in L^1(\H^d)$ such that there exists an $\varepsilon\in(0,1)$ such that
	\begin{equation}\label{eq:condition_u_0_radial}
		\sup\limits_{\theta_y\in\S^{d-1}}\left\{\int_C^\infty u_0(\theta_y, r_y)\sinh^{d-1+\varepsilon}(r_y)\, \d r_y\right\}\leq K_\varepsilon(u_0)
	\end{equation}
	for a certain constant $K_\varepsilon(u_0)$. Then, by definition, $\mathcal{N}(0,\theta_x)\in L^\infty(\S^{d-1})$ and, moreover,
	\[
	\mathcal{I}\lesssim t^{-1}e^{-(3d+\tilde\varepsilon(d))t} K_\varepsilon(u_0)
	\]
	and
	\[
	\mathcal{II}\lesssim t^{-1}e^{-(2d-2+\tilde\varepsilon(d))t} K_\varepsilon(u_0),
	\]
	with $\tilde\varepsilon(d)=((6-2d)\varepsilon-\varepsilon^2)/4$.
\end{lem}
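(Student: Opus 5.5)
The plan is to repeat the scheme of the proof of Lemma~\ref{lem:bound_derivative_N_when_u_0_compact_support}, replacing the compact-support bound on $r_y$ by a splitting of the $y$-integral and by the sharper lower bounds \eqref{eq:inferior_bound_lambda_positive_cosine}--\eqref{eq:inferior_bound_lambda_negative_cosine} on $\lambda(x,y)$.

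\textbf{Step 0: $\mathcal N(0,\cdot)\in L^\infty$.} At $\tau=0$ we have $\mathcal{N}(0,\theta)=\int_0^\infty u_0(r,\theta)\sinh^{d-1}(r)\,\d r$. On $[C,\infty)$ this is bounded by \eqref{eq:condition_u_0_radial}, since $\sinh^{\varepsilon}\ge c$ there. On $[0,C]$ I would use the same supremum-type control; if only integrability were available this would be a gap, so I read the hypothesis as also covering this bounded region.

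\textbf{Step 1: splitting.} For both $\mathcal I$ and $\mathcal{II}$, split the $y$-integration into two pieces.
\begin{itemize}
\item The region $r_y\le C$ is treated exactly as in the compact-support lemma. This gives a contribution $t^{-1}e^{\frac{(d-2)^2-(d+3)^2}{4}t}\|u_0\|_{L^1}$, and the analogous bound for $\mathcal{II}$. These rates are at least as good as the ones claimed, provided $\tilde\varepsilon$ is small.
\item In the region $r_y>C$, write $u_0\sinh^{d+1}(r_y)=u_0\sinh^{d-1+\varepsilon}(r_y)\cdot\sinh^{2-\varepsilon}(r_y)$. The first factor will be integrated in $r_y$, uniformly in $\theta_y$, against $K_\varepsilon$. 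So the goal becomes bounding the kernel
\[
\sup_{r_y>C}\ \sinh^{2-\varepsilon}(r_y)\int_{\S^{d-1}}\int_0^\infty \sin^2(\vartheta)\,h_{d+4}(t,\lambda)\sinh^{d-1}(r_x)\,\d r_x\,\d\theta_y .
\]
For $\mathcal{II}$ the kernel is the same with $\sinh^{1-\varepsilon}$, $|\cos\vartheta|$, $h_{d+2}$ and $\sinh^{d-2}(r_x)$.
\end{itemize}

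\textbf{Step 2: estimating the kernel.} Use $h_{d+4}(t,\lambda)\lesssim t^{-\frac32}\,\mathrm{poly}(\lambda,t)\,e^{-\lambda^2/4t-(d+3)\lambda/2-(d+3)^2t/4}$, then insert \eqref{eq:inferior_bound_lambda_positive_cosine}--\eqref{eq:inferior_bound_lambda_negative_cosine}.
\begin{itemize}
\item In the linear exponent $-(d+3)\lambda/2$, substitute $\lambda\ge r_x+r_y+\ln(\tfrac12\sin^2\vartheta)$. This produces the factor $e^{-\frac{d+3}{2}(r_x+r_y)}|\sin\vartheta|^{-(d+3)}$.
\item Together with the explicit $\sin^2\vartheta$ in $\mathcal I$, the angular singularity is $|\sin(\vartheta/2)|^{-(d+1)}$ near $\vartheta=0$. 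This is not integrable by Proposition~\ref{prp:integral_in_the_sphere_negative_sine}. To avoid this, use the interpolation
\[
\lambda\ge (1-\alpha)|r_x-r_y|+\alpha\,\bigl(r_x+r_y+\ln(\tfrac12\sin^2\vartheta)\bigr),
\]
with $\alpha$ chosen so that the singularity exponent exceeds $1-d$.
\item The gained $e^{-\alpha\frac{d+3}{2}r_y}$ absorbs $\sinh^{2-\varepsilon}(r_y)$, and this is where $\varepsilon$ must enter.
\item The $r_x$-integral is then a Gaussian in $|r_x-r_y|$, shifted by an amount of order $t$. Completing the square as before gives a factor $\sqrt t$, hence the $t^{-1}$, and an exponential rate in $t$.
\end{itemize}

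\textbf{Step 3: bookkeeping and main obstacle.} Optimizing $\alpha$ against $\varepsilon$ should produce the rate $3d+\tilde\varepsilon(d)$, with the quadratic expression $\tilde\varepsilon(d)=((6-2d)\varepsilon-\varepsilon^2)/4$ coming from completing a square whose linear coefficient is perturbed by $\varepsilon$. $\mathcal{II}$ is identical with $d+2$ in place of $d+4$ and gives $2d-2+\tilde\varepsilon$. The main obstacle is balancing the angular integrability, the $r_y$-growth absorption, and the $t$-exponent simultaneously. I would first try taking $\alpha$ proportional to $\varepsilon$ and check that every constraint closes; if the angular integral fails, I would restrict to $|\sin\vartheta|\ge e^{-\beta t}$ and treat the complementary cap with the trivial bound $\lambda\ge|r_x-r_y|$ and the cap's small measure $e^{-(d-1)\beta t}$.
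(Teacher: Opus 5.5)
\textbf{Verdict: the architecture is right, but the decisive step is missing and its bookkeeping is wrong.} Your architecture is the paper's. You peel off $r_y\le C$, and on $r_y>C$ you apply the sharp bound \eqref{eq:inferior_bound_lambda_positive_cosine} to only part of the linear exponent $\frac{d+3}{2}\lambda$, so that the angular weight stays integrable by Proposition~\ref{prp:integral_in_the_sphere_negative_sine}. But you leave open how large that part must be, and the accounting you give for it is wrong, so the plan does not close as written.

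\textbf{The $r_y$-decay must absorb more than $\sinh^{2-\varepsilon}(r_y)$.} After the substitution $r_x=r_y+z$ dictated by the Gaussian $e^{-(r_x-r_y)^2/4t}$, the factor $\sinh^{d-1}(r_x)\le e^{(d-1)r_x}$ produces $e^{(d-1)r_y}$. This is the same $e^{(d-1)r_y}$ that appears in the proof of Lemma~\ref{lem:bound_derivative_N_when_u_0_compact_support}. At the same time, the $r_x$-part of $e^{-\alpha\frac{d+3}{2}(r_x+r_y)}$ produces a second factor $e^{-\alpha\frac{d+3}{2}r_y}$. The net $r_y$-exponent is therefore $(d+1-\varepsilon)-(d+3)\alpha$, so you need $(d+3)\alpha\ge d+1-\varepsilon$. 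Angular integrability needs $(d+3)\alpha<d+1$. Thus $\alpha$ is pinned in a window of width $\varepsilon/(d+3)$ just below $\frac{d+1}{d+3}$, and that window is the real role of $\varepsilon$.

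\textbf{Consequences for your choices.}
\begin{itemize}
\item Taking $\alpha\propto\varepsilon$ leaves a factor $e^{(d+1-O(\varepsilon))r_y}$ that no $\varepsilon$-moment controls. For small $\varepsilon$ it even violates your own requirement $\alpha\frac{d+3}{2}\ge2-\varepsilon$.
\item Your fallback fails for the same reason. On a fixed cap $|\sin\vartheta|<e^{-\beta t}$, the trivial bound $\lambda\ge|r_x-r_y|$ leaves $e^{(d-1)r_y}\sinh^{d+1}(r_y)$, and the cap's measure does not depend on $r_y$.
\item Interpolating the entire linear term leaves nothing to absorb the prefactor $\lambda^{(d+3)/2}$, which grows with $r_x+r_y$.
\end{itemize}

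\textbf{How the paper closes it.} It writes $\frac{d+3}{2}\lambda=\frac{2+\varepsilon}{2}\lambda+\frac{d+1-\varepsilon}{2}\lambda$. The first piece kills $\lambda^{(d+3)/2}$; the sharp bound is applied only to the second piece, which is the lower edge of the window; and $\lambda^2\ge(r_x-r_y)^2$ is used in the quadratic term. Completing the square in $z$ then yields three things at once:
\begin{itemize}
\item the linear coefficient $\frac{d-3+\varepsilon}{2}$, hence the rate $\frac{(d-3+\varepsilon)^2-(d+3)^2}{4}=-(3d+\tilde\varepsilon(d))$;
\item the factor $e^{-(2-\varepsilon)r_y}$, which turns $\sinh^{d+1}(r_y)$ into $\sinh^{d-1+\varepsilon}(r_y)$;
\item the angular weight $\sin^2\vartheta\,|\sin(\vartheta/2)|^{-(d+1-\varepsilon)}\lesssim|\sin(\vartheta/2)|^{1-d+\varepsilon}$, which is independent of $r_y$, so Fubini gives $K_\varepsilon(u_0)$.
\end{itemize}
Note that what is needed is $\int_{\S^{d-1}}\sup_{r_y}$, not $\sup_{r_y}\int_{\S^{d-1}}$ as you displayed. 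For $\mathcal{II}$ the same accounting forces the fraction $\frac{d-1-\varepsilon}{2}$ of $\frac{d+1}{2}\lambda$. With this choice, keeping part of the remaining $\frac{2+\varepsilon}{2}\lambda$ as $|r_x-r_y|$, as you propose, is harmless.

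\textbf{The region $r_y\le C$.} For $d\ge3$ the rates of Lemma~\ref{lem:bound_derivative_N_when_u_0_compact_support}, namely $\frac52(d+\frac12)$ and $\frac32(d-\frac12)$, are slower than $3d+\tilde\varepsilon(d)$ and $2d-2+\tilde\varepsilon(d)$. For example, at $d=3$ one has $\frac{35}{4}<9-\frac{\varepsilon^2}{4}$. So your claim that the compact-support rates are at least as good is false; the paper takes the same shortcut. The repair is $\lambda\ge r_x-r_y\ge r_x+r_y-2C$, which runs the computation above with no angular singularity.

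\textbf{Step~0.} Your caveat is justified: the hypothesis only controls $r\ge C$, and the paper simply asserts $\mathcal{N}(0,\cdot)\in L^\infty$.
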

\begin{proof}

	As a corollary of Lemma~\ref{lem:bound_derivative_N_when_u_0_compact_support}, we only have to worry about the $r_y$ in the set $(C, \infty)$ for any $C>0$, since the integral over $r_y$ in the set $r_y\in[0, C]$ can be bounded by a similar argument.
	
	So, we focus on the set $r_y\in (C, \infty)$ for some $C>0$. Next, given $\theta_x$ and recalling $\vartheta=\arccos(\langle\theta_x, \theta_y\rangle)$, we divide the integral over $\theta_y\in\S^{d-1}$ in two sets,
	\[
	S_1:=\left\{\theta_y\in \S^{d-1}\text{ such that }\vartheta\in \left[-\frac{\pi}{2},\ \frac{\pi}{2}\right] \right\}.
	\]
	and
	\[
	S_2:=\left\{\theta_y\in \S^{d-1}\text{ such that }\vartheta\in \left[\frac{\pi}{2},\ \frac{3\pi}{2}\right] \right\}.
	\]
	
	First, for the integral in $\mathcal{I}$ over $\int_C^\infty \int_{S_1}\int_{0}^\infty\,\d r_y\,\d \theta_y  \, \d r_x$. In this set we use the bound $\lambda(x,y)\geq |r_x-r_y|$  and the one from~\eqref{eq:inferior_bound_lambda_positive_cosine} to see that
	\[
	h_{d+4}(t,\lambda)\, \sinh^{d-1}(r_x)\lesssim t^{-\frac{3}{2}}\lambda^{\frac{d+3}{2}}e^{-\frac{2+\varepsilon}{2}\lambda}\, e^{-\frac{1}{4t}f(r_x, r_y, t)}
	\]
	with
	\[
	f(r_x, r_y, t):=|r_x-r_y|^2+2(d+1-\varepsilon)\, t \,\left(r_x+r_y+\ln\left(\frac{1}{2}\sin^2(\vartheta/2)\right)\right) - 4(d-1)t\, r_x + (d+3)^2t^2.
	\]
	It is not hard to see that $f$ can be written as
	\[
	\begin{aligned}
		f(r_x, r_y, t):=& \left[(r_x-r_y) - (d-3+\varepsilon)t\right]^2 -\left[(d-3+\varepsilon)^2-(d+3)^2\right]t^2 + 4\, t \, r_y \\[10pt]
		& + 2(d+1-\varepsilon)t\ln\left(\frac{1}{2}\sin^2(\vartheta/2)\right)
	\end{aligned}
	\]
	This yields
	\[
	h_{d+4}(t,\lambda)\, \sinh^{d-1}(r_x)\lesssim t^{-\frac{3}{2}}e^{-\frac{((r_x-r_y) - (d-2)t)^2}{4t}}\cdot e^{-r_y}\cdot e^{\frac{(d-3+\varepsilon)^2-(d+3)^2}{4}t}\cdot |\sin(\vartheta/2)|^{-(d+1-\varepsilon)},
	\]
	which in turn provides, using Proposition~\ref{prp:integral_in_the_sphere_negative_sine} in the final step,
	\[
	\begin{aligned}
		&\int_C^\infty \int_{S_1}\int_{0}^\infty\, (\cdots)\, \d r_y\,\d \theta_y  \, \d r_x\\[10pt]
		&\quad\lesssim t^{-1}e^{\frac{\left(d-3+\varepsilon \right)^2-(d+3)^2}{4}t} \int_C^\infty \int_{S_1}  u_0(r_y, \theta_y)\, \sinh^{d-1+\varepsilon}(r_y)\, \sin^2(\vartheta)\, |\sin(\vartheta/2)|^{-(d+1-\varepsilon)}  \d r_y\,\d \theta_y\\[10pt]
		&\quad\lesssim t^{-1}e^{-(3d+\tilde\varepsilon(d))t} \int_C^\infty \int_{S_1}  u_0(r_y, \theta_y)\, \sinh^{d-1+\varepsilon}(r_y)\, |\sin(\vartheta/2)|^{1-d+\varepsilon}\,  \d r_y\,\d \theta_y\\[10pt]
		&\quad = t^{-1}e^{-(3d+\tilde\varepsilon(d))t} \int_{S_1}    |\sin(\vartheta/2)|^{1-d+\varepsilon}\left( \int_C^\infty  u_0(r_y, \theta_y)\,\sinh^{d-1+\varepsilon}(r_y)\, \d r_y\right)\,\d \theta_y\\[10pt]
		&\quad\lesssim t^{-1}e^{-(3d+\tilde\varepsilon(d))t} K(u_0),
	\end{aligned}
	\]
	with $\tilde\varepsilon(d)=((6-2d)\varepsilon-\varepsilon^2)/4$. Note that $\tilde{\varepsilon}(2)>0$ for all $\varepsilon\in (0,1)$.	
	
	The case for the integral in $\mathcal{I}$ over $\int_C^\infty \int_{S_2}\int_{0}^\infty\,\d r_y\,\d \theta_y  \, \d r_x$ is simpler since we can use is a similar way the simpler bounds  $\lambda(x,y)\geq |r_x-r_y|$ and~\eqref{eq:inferior_bound_lambda_negative_cosine}. In total, it is easy to see that
	\[
	\mathcal{I}\lesssim t^{-1}e^{-(3d+\tilde\varepsilon(d))t} K_\varepsilon(u_0).
	\]	
	The bound
	\[
	\mathcal{II}\lesssim t^{-1}e^{-(2d-2+\tilde\varepsilon(d))t} K_\varepsilon(u_0)
	\]
	comes analogously
\end{proof}

Note that the condition~\eqref{eq:condition_u_0_radial} is not entirely equivalent to asking
\[
\mathcal{M}_\varepsilon(u_0):= \int_0^\infty \int_{\S^{d-1}} u_0(r_y,\theta_y)\sinh^{d-1+\varepsilon}(r_y)\, \d \theta_y\, \d r_y <\infty,
\]
which can be understood as an $\varepsilon$-moment of the initial datum. It is, however, sufficient, since it is clear that
\[
\mathcal{M}_\varepsilon(u_0)\leq |\S^{d-1}|\cdot K_\varepsilon(u_0).
\]

Again, as a consequence of~\eqref{eq:formula_for_derivative_N_in_time} and the previous lemma, we readily obtain
\[
|\partial_\tau \mathcal{N}(\tau,\theta_x)|\lesssim e^{-(d-2+\tilde{\varepsilon}(d))\, t}\sim e^{-(d-2+\tilde{\varepsilon}(d))\, e^\tau}.
\]
Choosing $\varepsilon$ small enough we readily obtain
\[
|\partial_\tau \mathcal{N}(\tau,\theta_x)|\lesssim e^{-\delta_d\, e^\tau}
\]
for a certain $\delta_d\in (0, d-2)$ for $d\geq 3$ and $\delta_d\in (0,1)$ for $d=2$. Therefore,
\[
|\mathcal{N}(\tau_1,\theta_x) - \mathcal{N}(\tau_0,\theta_x)| \leq \int_{\tau_0}^{\tau_1} e^{-\delta_d e^\tau}\ \d \tau < \int_{\tau_0}^{\tau_1} e^{\tau-\delta_d e^\tau}\ \d \tau = \frac{e^{-\delta_d e^{\tau_0}}-e^{-\delta_d e^{\tau_1}}}{\delta_d}.
\]
Thus, for any sequence $\{\tau_k\}\to\infty$ the corresponding sequence $\{\mathcal{N}(\tau_k, \theta_x)\}$ is a Cauchy sequence. The limit
\[
\mathcal{N}_\infty(\theta_x):= \lim\limits_{\tau\to\infty}  \mathcal{N}(\tau, \theta_x)
\]
exists and satisfies
\[
|\mathcal{N}_\infty(\theta_x) - \mathcal{N}(\tau, \theta_x)|\lesssim \frac{e^{-\delta_d e^{\tau}}}{\delta_d}.
\]
Moreover, since $\mathcal{N}(0,\theta_x)\in L^\infty(\S^{d-1})$, we should have $\mathcal{N}(\tau,\theta_x)\in L^\infty(\S^{d-1})$ for all $\tau\geq 0$.

We summarize the previous computations in this lemma.

\begin{lem}\label{lem:bound_derivative_N_general datum}
	Let $u_0\in L^1(\H^d)$ such that condition~\eqref{eq:condition_u_0_radial} is satisfied. Then for all $\theta_x\in\mathbb{S}^{d-1}$ the function $\mathcal{N}(\tau,\theta_x)\in L^\infty(\S^{d-1})$ converges, as time grows to infinity, to a function $\mathcal{N}_\infty(\theta_x)\in L^\infty(\mathbb{S}^{d-1})$. Moreover, for a certain $\delta_d>0$ depending only on the dimension $d$,
	\[
	|\partial_\tau \mathcal{N}(\tau,\theta_x)|\lesssim e^{-\delta_d e^\tau}
	\]
	and
	\[
	|\mathcal{N}_\infty(\theta_x) - \mathcal{N}(\tau, \theta_x)|\lesssim \frac{e^{-\delta_d e^{\tau}}}{\delta_d}.
	\]
\end{lem}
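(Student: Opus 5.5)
The plan is to assemble Lemma~\ref{lem:bound_derivative_N_general datum} from the ingredients computed just before it: the representation \eqref{eq:formula_for_derivative_N_in_time} of $\partial_\tau\mathcal{N}$ in terms of $\mathcal{I}$ and $\mathcal{II}$, the bounds of Lemma~\ref{lem:growth_condition_u_0}, and a Cauchy-criterion argument in $\tau$. I will carry out the steps in the order below.

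\textbf{Step 1: the pointwise derivative bound.} Fix $\theta_x$ and write $t=e^\tau-1\sim e^\tau$. Insert the estimates of Lemma~\ref{lem:growth_condition_u_0} into \eqref{eq:formula_for_derivative_N_in_time}:
\[
|\partial_\tau\mathcal{N}|\lesssim t\,e^{2(d+1)t}\,t^{-1}e^{-(3d+\tilde\varepsilon)t}K_\varepsilon+t\,e^{dt}\,t^{-1}e^{-(2d-2+\tilde\varepsilon)t}K_\varepsilon .
\]
The two exponents are $-(d-2+\tilde\varepsilon)t$ in both cases. Since $\tilde\varepsilon(d)=((6-2d)\varepsilon-\varepsilon^2)/4$ tends to $0$ as $\varepsilon\to0$, I take $\varepsilon$ small, shrinking it if necessary, since condition \eqref{eq:condition_u_0_radial} with a given $\varepsilon$ implies it for smaller ones. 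This makes $\delta_d:=d-2+\tilde\varepsilon(d)>0$. For $d\ge3$ this holds for small $\varepsilon$, and for $d=2$ it uses $\tilde\varepsilon(2)>0$ strictly. Using $t\ge e^\tau/2$ for $\tau\ge1$, and boundedness on $[0,1]$, we get $|\partial_\tau\mathcal{N}|\lesssim e^{-\delta_d e^\tau}$ after adjusting $\delta_d$. The bound is uniform in $\theta_x$, because $K_\varepsilon$ is a supremum over directions and Proposition~\ref{prp:integral_in_the_sphere_negative_sine} gives a bound independent of $\theta_x$.

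\textbf{Step 2: existence of the limit and the rate.} For $\tau_1>\tau_0$,
\[
|\mathcal{N}(\tau_1)-\mathcal{N}(\tau_0)|\le\int_{\tau_0}^{\tau_1}e^{-\delta_d e^{s}}\d s\le\int_{\tau_0}^{\infty}e^{s-\delta_d e^{s}}\d s=\delta_d^{-1}e^{-\delta_d e^{\tau_0}} .
\]
So $\mathcal{N}(\tau,\theta_x)$ is Cauchy as $\tau\to\infty$, uniformly in $\theta_x$. Call its limit $\mathcal{N}_\infty(\theta_x)$. Letting $\tau_1\to\infty$ gives the stated rate.

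\textbf{Step 3: the $L^\infty$ statements.} At $\tau=0$, $\mathcal{N}(0,\theta)=\int_0^\infty u_0(r,\theta)\sinh^{d-1}(r)\d r$. Split this at $r=C$. The tail is bounded by $K_\varepsilon$, since $\sinh^{d-1}\le\sinh^{d-1+\varepsilon}$ for $r\ge C$ with $C\ge\operatorname{arcsinh}1$. For the part $r<C$, I will either read it as included in the hypothesis or apply the same lemma with $C$ reduced. Then
\[
\|\mathcal{N}(\tau)\|_\infty\le\|\mathcal{N}(0)\|_\infty+\int_0^\infty\sup_\theta|\partial_s\mathcal{N}|\d s<\infty ,
\]
and the uniform convergence gives $\mathcal{N}_\infty\in L^\infty(\S^{d-1})$.

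\textbf{Main obstacle.} The delicate point is Step~1's uniformity and positivity of $\delta_d$, especially for $d=2$, where $d-2=0$ and the decay rests entirely on $\tilde\varepsilon(2)>0$. The other check is that the near-origin part $r_y\in[0,C]$ of $\mathcal{I}$ and $\mathcal{II}$ obeys the same bound. I would handle that part by the compact-support computation of Lemma~\ref{lem:bound_derivative_N_when_u_0_compact_support}, whose rates are at least as good.
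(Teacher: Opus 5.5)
Your route is the paper's: insert the bounds of Lemma~\ref{lem:growth_condition_u_0} into \eqref{eq:formula_for_derivative_N_in_time}, note that both terms decay like $e^{-(d-2+\tilde\varepsilon(d))t}$, conclude with the Cauchy criterion in $\tau$, and take $L^\infty$ from $\mathcal N(0,\cdot)$.

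\textbf{The rate of the near-origin part.} One justification you add is wrong. The compact-support computation of Lemma~\ref{lem:bound_derivative_N_when_u_0_compact_support} does \emph{not} give rates at least as good for the part $r_y\in[0,C]$ when $d\ge 3$. Its exponents $\frac52(d+\frac12)$ and $\frac32(d-\frac12)$ only yield $|\partial_\tau\mathcal N|\lesssim e^{-\frac{2d-3}{4}t}$, which is exactly what the paper records after that lemma. For $d\ge3$ and every $\varepsilon\in(0,1)$, however, $d-2+\tilde\varepsilon(d)>d-2-\frac{2d-5}{4}=\frac{2d-3}{4}$, so the compact-support rate is strictly worse. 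Your claim holds only for $d=2$. The fix is simple: set $\delta_d:=\min\{\frac{2d-3}{4},\,d-2+\tilde\varepsilon(d)\}>0$. This also shows no shrinking of $\varepsilon$ is needed. For $d\ge3$ it gives $\delta_d=\frac{2d-3}{4}$, which depends only on $d$; for $d=2$ it gives $\tilde\varepsilon(2)$, which depends on $\varepsilon$, as in the paper.

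\textbf{Two steps the hypothesis does not supply.} The paper glosses over both, writing ``by definition'' and ``we should have''.

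First, ``boundedness on $[0,1]$'' is unsupported. The estimates of Lemmas~\ref{lem:bound_derivative_N_when_u_0_compact_support} and~\ref{lem:growth_condition_u_0} use $t^{-\frac{d+4}{2}}(1+\lambda+t)^{\frac{d+1}{2}}\lesssim t^{-\frac32}(1+\lambda)^{\frac{d+1}{2}}$, which is valid only for $t\gtrsim1$. Nothing controls $\partial_\tau\mathcal N$ as $\tau\to0$ for merely integrable data. This does not affect the existence of $\mathcal N_\infty$ or the rate, which only need $\tau\ge\tau_*$. It does break your Step~3, which integrates $\sup_\theta|\partial_s\mathcal N|$ from $s=0$.

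Second, condition~\eqref{eq:condition_u_0_radial} with a fixed $C$ says nothing about $r<C$. The condition for a smaller $C$ is not implied, so neither of your two options is available. For example, take $u_0=g(\theta)\1_{[a,b]}(r)$ with $0<a<b<C$ and $g\in L^1(\S^{d-1})\setminus L^\infty(\S^{d-1})$. It satisfies the hypothesis with $K_\varepsilon=0$, yet $\mathcal N(0,\cdot)\notin L^\infty(\S^{d-1})$.

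So the $L^\infty$ assertions need one of two things:
\begin{itemize}
\item an extra assumption bounding $\sup_\theta\int_0^C u_0\sinh^{d-1}(r)\d r$; or
\item a separate bound on $\mathcal N(\tau_*,\cdot)$ for some $\tau_*>0$, derived from the kernel representation, after which you integrate $\partial_\tau\mathcal N$ from $\tau_*$.
\end{itemize}
In the second option the near-origin part is harmless, since $G_d(t_*,\lambda)\le G_d(t_*,(r_x-C)_+)$ when $r_y\le C$.
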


\subsection{Convergence with rates}\label{sect:convergence_with_rates}

Using formula~\eqref{eq:entropy_with_N} and Lemmata~\ref{lem:bounds_N} and~\ref{lem:bound_derivative_N_general datum} we arrive to
\[
\partial_\tau H(\tau) \leq  k_d\ \|u_0\|_{L^1(\H^d)}\ e^{\tau-m_d e^\tau}+ |\mathbb{S}^{d-1}|\ \ln(C_2(u_0))\ \frac{e^{-\delta_d e^{\tau}}}{\delta_d}  - \lambda H(\tau),
\]
Again as in Section~\ref{sect:elliptic} we can conclude from here the existence of a value $H^*(v_0)$ depending only on the initial datum and the dimension $d$ such that
\[
H(\tau)\leq H^*(v_0)e^{-\lambda \tau},
\]
hence
\[
\|v- \mathcal{N}V\|_{L^1}\leq \sqrt{H^*(v_0)}(t+1)^{-\frac{\lambda}{2}}
\]

We recover the coordinate system $x=(\theta, r)$. The previous result can be translated to
\begin{equation}\label{eq:convergence_N(t)}
	\| u(t,x) - C(t)\ \mathcal{N}(t,\theta)\ \Gamma(t+1, r+(d-1)(t+1)) \|_{L^1(\mathbb{H}^d)}  \leq \sqrt{H^*(u_0)}\cdot (t+1)^{-\frac{1}{2}},
\end{equation}
where $C(t)$ is chosen so that
\[
C(t)\int_0^\infty \Gamma(t+1, r+(d-1)t) \ \d \mu(r)= 1.
\]
Note that, by Lemma~\ref{lem:constant_C(t)},
\[
C_\infty = \frac{2^{d-2}}{\sqrt{\pi}}\leq C(t)\leq \frac{2^{d-2}}{\sqrt{\pi}} + O(e^{-m_d(t+1)}).
\]

Let us now compute the value of $\mathcal{N}_\infty$. We define, for readability,
	\[
	S_{d-1}:=|\S^{d-1}|.
	\]
    Then we have:
\begin{lem}\label{lem:value_N_infty}
	Let $u_0\in L^1(\H^d)$ such that condition~\eqref{eq:condition_u_0_radial} is satisfied  and $\Phi \in L^1(\mathbb{S}^{d-1})$, defined as in~\eqref{eq:def_Phi} . Then $S_{d-1}\cdot \mathcal{N}_\infty(\theta) = \Phi(\theta)$ a.e. in $\S^{d-1}$.
\end{lem}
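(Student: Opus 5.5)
The plan is to identify $\mathcal{N}_\infty$ by comparing two descriptions of the same limit object. We test both asymptotic statements against functions of the angular variable only, integrating radially. The two descriptions are Theorem~\ref{thm:convergence_without_rate}, which gives $u(t,\cdot)\to\Phi(\theta_x)P_d(t,\cdot)$ in $L^1(\H^d)$, and the entropy estimate~\eqref{eq:convergence_N(t)} together with Lemma~\ref{lem:bound_derivative_N_general datum}.

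Fix $\psi\in L^\infty(\S^{d-1})$ and consider
\[
A(t):=\int_{\S^{d-1}}\psi(\theta)\int_0^\infty u(t,r,\theta)\sinh^{d-1}(r)\,\d r\,\d\theta .
\]
By the change of variables defining $v$, the inner radial integral is exactly $\mathcal{N}(\tau(t),\theta)$. Hence $A(t)=\int_{\S^{d-1}}\psi\,\mathcal{N}(\tau,\theta)\,\d\theta$. By Lemma~\ref{lem:bound_derivative_N_general datum}, $\mathcal{N}(\tau,\cdot)\to\mathcal{N}_\infty$ uniformly on $\S^{d-1}$, with the explicit bound $e^{-\delta_d e^\tau}/\delta_d$. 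Therefore $A(t)\to\int\psi\,\mathcal{N}_\infty$.

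On the other hand,
\[
\Big|A(t)-\int_{\S^{d-1}}\psi\,\Phi\,\d\theta\int_0^\infty G_d(t,r)\sinh^{d-1}(r)\,\d r\Big|\le \|\psi\|_\infty\,\|u(t)-\Phi P_d(t)\|_{L^1(\H^d)},
\]
which tends to $0$ by Theorem~\ref{thm:convergence_without_rate}. This uses $\Phi\in L^1(\S^{d-1})$, which also makes the middle term finite. Mass conservation of the kernel gives
\[
\int_{\H^d}P_d=S_{d-1}\int_0^\infty G_d\sinh^{d-1}\,\d r=1,
\]
so the radial factor equals $1/S_{d-1}$, independent of $t$. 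Comparing the two limits yields
\[
\int_{\S^{d-1}}\psi\,\big(S_{d-1}\mathcal{N}_\infty-\Phi\big)\,\d\theta=0\quad\text{for all }\psi\in L^\infty(\S^{d-1}).
\]
Both $\mathcal{N}_\infty$ (bounded) and $\Phi$ are in $L^1(\S^{d-1})$, so the claimed equality holds a.e. One can simply take $\psi=\mathrm{sgn}(S_{d-1}\mathcal{N}_\infty-\Phi)$.

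The step needing most care is the normalisation constant: matching $\int_0^\infty G_d\sinh^{d-1}=1/S_{d-1}$ with the paper's convention for $\mathcal{N}$, which is the radial mass per unit solid angle. Any discrepancy here is exactly the factor $S_{d-1}$ in the statement, so the computation $\int_{\H^d}P_d=1$ in polar coordinates must be written out explicitly.

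A secondary point is the identification of the radial integral of $v$ with $\mathcal{N}$. This follows directly from the definition of $v$, via $\d\rho=e^{-\tau/2}\d r$ and the cancelling factor $e^{\tau/2}$. The rest is bookkeeping.

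An alternative route that avoids Theorem~\ref{thm:convergence_without_rate} is also available. One would compute $\mathcal{N}(\tau,\theta_x)=\int u_0(y)\int_0^\infty G_d(t,\lambda(x,y))\sinh^{d-1}(r_x)\,\d r_x\,\d\mu(y)$ and pass to the limit using Proposition~\ref{prp:convergence_kernels_varphi} with dominated convergence. However, that route needs a domination of the inner integral by $C e^{(d-1)r_y}$-type weights, so I prefer the weak-limit argument above.
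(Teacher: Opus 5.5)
Your argument is correct, but it takes a different and more direct route than the paper's. The paper starts from $\|\Phi - S_{d-1}\mathcal{N}_\infty\|_{L^1(\S^{d-1})} = S_{d-1}\|\Phi G_d - S_{d-1}\mathcal{N}_\infty G_d\|_{L^1(\H^d)}$ and splits the right-hand side by the triangle inequality into three terms:
\begin{itemize}
\item $\|u-\Phi G_d\|$, handled by Theorem~\ref{thm:convergence_without_rate} exactly as you do;
\item $\|u - C(t)\mathcal{N}_\infty\Gamma\|$, handled by the entropy estimate \eqref{eq:convergence_N(t)};
\item $\|C(t)\mathcal{N}_\infty\Gamma - S_{d-1}\mathcal{N}_\infty G_d\|$, handled by the radial Gaussian comparison \eqref{eq:convergence_Gamma_SG}, which comes from Theorem~\ref{thm:L^1_radially_symmetric_asymptotic}.
\end{itemize}

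You instead use the exact identity $\int_0^\infty u(t,r,\theta)\sinh^{d-1}(r)\,\d r = \mathcal{N}(\tau,\theta)$. Projecting Theorem~\ref{thm:convergence_without_rate} onto the angular variable then already forces $\mathcal{N}(\tau,\cdot)\to \Phi/S_{d-1}$. The test functions $\psi$ are not even needed: Fubini gives directly $\|\mathcal{N}(\tau,\cdot)-\Phi/S_{d-1}\|_{L^1(\S^{d-1})}\le \|u(t)-\Phi P_d(t)\|_{L^1(\H^d)}$.

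This buys several things. You need neither the entropy machinery nor the profile $\Gamma$ with its normalising constant $C(t)$. You only need the existence of $\mathcal{N}_\infty$ from Lemma~\ref{lem:bound_derivative_N_general datum}; even mere pointwise convergence of $\mathcal{N}(\tau,\cdot)$ would suffice, by extracting an a.e.\ convergent subsequence from the $L^1$ convergence. You also sidestep a small imprecision in the paper. Its second term is attributed to \eqref{eq:convergence_N(t)}, but that estimate involves $\mathcal{N}(t,\theta)$ rather than $\mathcal{N}_\infty$. So the uniform bound $|\mathcal{N}_\infty-\mathcal{N}(\tau,\cdot)|\lesssim e^{-\delta_d e^{\tau}}/\delta_d$ has to be invoked there silently anyway.

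The paper's route, in exchange, is phrased in terms of the same $L^1(\H^d)$ estimates that are later assembled into the rate-of-convergence theorem, so it fits the entropy framework of the section. For the mere identification of the limit, however, it uses more than is necessary.
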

\begin{proof}
    On the one hand, since\begin{equation}\label{eq:mass_kernel_equal_1}
		\int_{\H^d} P_d(t,x)\ \d \mu(x) = S_{d-1} \int_0^\infty G_d(t,r)\ \d \mu(r)= 1
	\end{equation}
	we deduce
	\[
	C(t)\int_0^\infty \Gamma(t+1, r+(d-1)t) \ \d \mu(r)=S_{d-1} \int_0^\infty G_d(t,r)\ \d \mu(r)= 1.
	\]
	From Theorem~\ref{thm:L^1_radially_symmetric_asymptotic} we obtain
	\begin{equation}\label{eq:convergence_Gamma_SG}
		\| C(t)\ \Gamma(t+1, r+(d-1)t)-S_{d-1}\ G_d(t,r)  \|_{L^1([0,\infty]; \d \mu)}\lesssim (t+1)^{-\frac{1}{2}}.
	\end{equation}
	
	On the other hand, from~\eqref{eq:mass_kernel_equal_1}, we obtain
	\[
	\|\Phi - S_{d-1}\ \mathcal{N}_\infty \|_{L^1(\S^{d-1})} = S_{d-1}\cdot \|\Phi\ G_d - S_{d-1}\ N_\infty G_d   \|
	\]
	and therefore, as a consequence of the triangular inequality,
	\[
	\begin{aligned}
		&\|\Phi - S_{d-1}\ \mathcal{N}_\infty \|_{L^1(\S^{d-1})}\\[10pt]
		&\leq S_{d-1}\left( \underbrace{\|C(t)\mathcal{N}_\infty \Gamma - S_{d-1}\ \mathcal{N}_\infty G_d\|_{L^1(\H^{d-1})}}_{A_1(t)} + \underbrace{\|u   -  C(t)\mathcal{N}_\infty \Gamma\|_{L^1(\H^{d-1})}}_{A_2(t)} + \underbrace{\|u - \Phi G_d\|_{L^1(\H^{d-1})}}_{A_3(t)} \right).
	\end{aligned}
	\]
	From~\eqref{eq:convergence_N(t)} we have $A_2(t)\lesssim t^{-\frac{1}{2}}$, and from Theorem~\ref{thm:convergence_without_rate}, $A_3(t)\to 0$ as $t$ grows. Finally, using~\eqref{eq:convergence_Gamma_SG},
	\[
	A_1(t) = \|\mathcal{N}_\infty\|_{L^1(\S^{d-1})}\cdot \|C(t) \Gamma - S_{d-1} G_d\|_{L^1([0,\infty], \d\mu)}\lesssim \|\mathcal{N}_\infty\|_{L^1(\S^{d-1})} (t+1)^{-\frac{1}{2}}.
	\]
	Therefore, there exists a function $f(t)$ such that
	\[
	\|\Phi - S_{d-1}\ \mathcal{N}_\infty \|_{L^1(\S^{d-1})}\leq f(t)\to 0.
	\]
	Since $\Phi$ and $\mathcal{N}_\infty$ do not depend on time, we conclude that
	\[
	\|\Phi - S_{d-1}\ \mathcal{N}_\infty \|_{L^1(\S^{d-1})} = 0,
	\]
	and the result follows.
	
\end{proof}

Putting together Lemmata~\ref{lem:bound_derivative_N_general datum} and~\ref{lem:value_N_infty}, and inequality~\eqref{eq:convergence_N(t)}, we obtain the following Theorem. The proof is left for the reader as an exercise.

\begin{thm}\label{thm:integrable_L^1_Lînfty_convergence}
	Let $x\in\H^d$ be given by the coordinates $x=(r_x,\theta_x)$. Let $u_0\in L^1(\H^d)$ such that condition~\eqref{eq:condition_u_0_radial} is satisfied  and $\Phi \in L^1(\mathbb{S}^{d-1})$, defined as in~\eqref{eq:def_Phi}. Then,
	\[
	\| u(t,x) - C(t)\ \mathcal{N}(t,\theta)\ \Gamma(t+1, r_x+(d-1)(t+1)) \|_{L^1(\mathbb{H}^d)}  \leq \sqrt{H^*(u_0)}\cdot (t+1)^{-\frac{1}{2}}.
	\]
	Moreover, there exists a time $t_0$ such that, for all $t>t_0$,
	\[
	\| u(t,x) - C_\infty\ (S_{d-1})^{-1}\ \Phi(\theta_x)\ \Gamma(t+1, r_x+(d-1)(t+1)) \|_{L^1(\mathbb{H}^d)}  \lesssim t^{-\frac{1}{2}},
	\]
	where
	\[
	C_\infty = \frac{2^{d-2}}{\sqrt{\pi}},\quad S_{d-1} = |\S^{d-1}|.
	\]
	If we want to express convergence in terms of the heat kernel $G_d$,
	\[
	\| u(t,x) - \Phi(\theta_x)\ G_d(t,r_x) \|_{L^1(\mathbb{H}^d)}  \lesssim t^{-\frac{1}{2}}\quad \text{for all}\quad t\geq t_0.
	\]
\end{thm}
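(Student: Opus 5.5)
The first estimate is exactly \eqref{eq:convergence_N(t)}. It comes from the entropy inequality \eqref{eq:entropy_with_N} together with Proposition~\ref{lem:bounds_N} and Lemma~\ref{lem:bound_derivative_N_general datum}, so I take it as already established. The remaining statements follow by replacing the transient quantities $C(t)$ and $\mathcal{N}(t,\theta)$ with their limits, controlling each replacement by the triangle inequality, and finally passing from the Gaussian profile to the heat kernel.

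\textbf{Step 1: replace $\mathcal{N}$ and $C$ by their limits.} Write $\Gamma_t(r):=\Gamma(t+1,r+(d-1)(t+1))$. I split
\[
\|u-C_\infty S_{d-1}^{-1}\Phi\,\Gamma_t\|_{L^1}\le \|u-C(t)\mathcal{N}(t,\cdot)\Gamma_t\|_{L^1}+C(t)\|(\mathcal{N}(t,\cdot)-\mathcal{N}_\infty)\Gamma_t\|_{L^1}+|C(t)-C_\infty|\,\|\mathcal{N}_\infty\Gamma_t\|_{L^1},
\]
where Lemma~\ref{lem:value_N_infty} gives $\mathcal{N}_\infty=S_{d-1}^{-1}\Phi$ a.e. The three terms are handled as follows.
\begin{itemize}
\item The first term is at most $\sqrt{H^*(u_0)}(t+1)^{-1/2}$ by \eqref{eq:convergence_N(t)}.
\item The $L^1(\H^d)$ norm factorizes as $\|\cdot\|_{L^1(\S^{d-1})}\cdot\|\Gamma_t\|_{L^1(\R^+,\d\mu)}$. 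Since $C(t)\|\Gamma_t\|_{L^1(\d\mu)}$ is essentially $1$ by the normalization of $C(t)$, the second term is bounded by $S_{d-1}\sup_\theta|\mathcal{N}(t,\theta)-\mathcal{N}_\infty(\theta)|\lesssim e^{-\delta_d t}$, using Lemma~\ref{lem:bound_derivative_N_general datum}.
\item The third term is $O(e^{-m_d(t+1)})$, by the bound on $C(t)$ from Lemma~\ref{lem:constant_C(t)}, i.e.\ \eqref{eq:limit_C(t)}, together with $\mathcal{N}_\infty\in L^\infty$.
\end{itemize}
Both exponential terms are eventually dominated by $(t+1)^{-1/2}$. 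This yields a $t_0$ beyond which the second inequality holds.

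\textbf{Step 2: pass to the heat kernel.} I compare $C_\infty S_{d-1}^{-1}\Phi\,\Gamma_t$ with $\Phi\,G_d$ again through the tensor structure:
\[
\|C_\infty S_{d-1}^{-1}\Phi\Gamma_t-\Phi G_d\|_{L^1(\H^d)}=S_{d-1}^{-1}\|\Phi\|_{L^1(\S^{d-1})}\,\|C_\infty\Gamma_t-S_{d-1}G_d(t,\cdot)\|_{L^1(\R^+,\d\mu)}.
\]
The last factor is $\lesssim(t+1)^{-1/2}$. To see this, apply Corollary~\ref{cor:L^1_radially_symmetric_asymptotic} to the radial solution $S_{d-1}G_d$, whose radial mass is $\mathcal{M}_{\mathcal{R}}=1$. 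Alternatively, combine \eqref{eq:convergence_Gamma_SG} with the $C(t)\to C_\infty$ step above.

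There is a time-shift mismatch between $\Gamma(t+1,\cdot+(d-1)t)$ in \eqref{eq:convergence_Gamma_SG} and $\Gamma_t$. I expect this to be the only delicate point. One resolution is to note that translating a Gaussian of variance $\sim t$ by $O(1)$ changes it in $L^1(\d\mu)$ by $O(t^{-1/2})$. An explicit estimate of this translation error in the weighted measure is needed, because the weight $\sinh^{d-1}(r)$ is exponential. However, the factor $e^{(d-1)r}$ combines with the Gaussian exactly as in the horospheric computation, so the weighted Gaussian is again a Gaussian centered near $(d-1)t$, and the translation estimate reduces to a standard one-dimensional computation. Adding this bound to Step 1 gives the final heat-kernel statement for $t\ge t_0$.
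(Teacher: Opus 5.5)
Your argument is correct and is essentially the route the paper intends. The paper leaves the proof as an exercise after listing these same ingredients: \eqref{eq:convergence_N(t)}, the uniform exponential convergence $\mathcal{N}\to\mathcal{N}_\infty$, the identification $S_{d-1}\mathcal{N}_\infty=\Phi$, $C(t)\to C_\infty$, and the radial result for $S_{d-1}G_d$, combined through the tensor factorization of the $L^1$ norm.

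One remark needs correcting, about the ``mismatch''. It is a typo in the paper: from $\int V\,\d\rho=1$, the normalization of $C(t)$ uses $\Gamma(t+1,r+(d-1)(t+1))$, which is what Lemma~\ref{lem:constant_C(t)} and your primary route via Corollary~\ref{cor:L^1_radially_symmetric_asymptotic} already use. Your proposed fix would not work: since $\sinh^{d-1}(r)\approx 2^{1-d}e^{(d-1)r}$, replacing $r+(d-1)(t+1)$ by $r+(d-1)t$ in $\Gamma(t+1,\cdot)$ multiplies its $L^1(\d\mu)$ mass by asymptotically $e^{(d-1)^2}$, rather than perturbing it by $O(t^{-1/2})$.
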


\begin{rem}
	Following Remark~\ref{rem:varphi_continuous_phi_bounded}, conditions $\Phi \in L^1(\mathbb{S}^{d-1})$ and \eqref{eq:condition_u_0_radial} are satisfied when the initial data $u_0$ has compact support.
\end{rem}

\begin{rem}
	We do not claim whether condition~\eqref{eq:condition_u_0_radial} is sufficient or not for obtaining $\Phi \in L^1(\mathbb{S}^{d-1})$. It is, at least for small $\varepsilon$, not sufficient to obtain $\Phi \in L^\infty(\mathbb{S}^{d-1})$ (check conditions~\eqref{eq:condition_papgeorgiiou} and~\eqref{eq:condition_exponential_initial_datum} for a sufficient one, noticing the difference between $\d \mu(y)$ and $\d r_y$), but indeed we could have $\Phi \in L^1(\mathbb{S}^{d-1})$ while $\Phi \not\in L^\infty(\mathbb{S}^{d-1})$; this question is beyond the scope of this article. In the case where we do not know if condition~\eqref{eq:condition_u_0_radial} is satisfied, then we may not have convergence of $\mathcal{N}(\tau, \cdot)$. On the other hand, in the case where we do not know if $\Phi \in L^1(\mathbb{S}^{d-1})$ then we do not know how to identify the limit $\mathcal{N}_\infty$, and therefore the best convergence that we obtain is
	\[
	\| u(t,x) - C_\infty\ \mathcal{N}_\infty(\theta_x)\ \Gamma(t+1, r_x+(d-1)(t+1)) \|_{L^1(\mathbb{H}^d)}  \lesssim t^{-\frac{1}{2}}\quad \text{for all}\quad t\geq t_0
	\]
	for a certain unidentified function $\mathcal{N}_\infty$.
\end{rem}

\appendix

\section{Appendix}

\subsection{Construction of Fermi-coordinate systems based on hypersurfaces}\label{sect:construction_coordinates}

By a symmetry we mean the action of a subgroup of the \(\mathrm{Iso}(\mathbb{H}^d)\), the isometry group of hyperbolic space. There are three principal subgroups of \(\mathrm{Iso}(\mathbb{H}^d)\), corresponding to the three types of isometries in $\mathbb{H}^d$ with codimension one orbits: elliptic, parabolic, and hyperbolic. Each subgroup naturally gives rise to a distinct coordinate system. %To prepare for this, we first describe a general procedure to construct a coordinate system based on the distance to a hypersurface.
Note that in these cases the foliation of $\H^d$ by parallel hypersurfaces \(\{\Sigma_r\}_{r \in I}\) coincides with the orbits of one of the principal subgroups of \(\mathrm{Iso}(\mathbb{H}^d)\).  Let us discuss now the general procedure to construct a coordinate system based on the distance to a hypersurface.

We begin by recalling the definition of the exponential map at a point. Given \(x \in \mathbb{H}^d\), the exponential map at \(x\) is the function  
\[
\exp_x : T_x \mathbb{H}^d \longrightarrow \mathbb{H}^d
\]
defined as follows: for \(v \in T_x \mathbb{H}^d\), let \(\gamma_{x,v}: \mathbb{R} \to \mathbb{H}^d\) denote the unique geodesic with initial conditions \(\gamma_{x,v}(0) = x\) and \(\gamma_{x,v}'(0) = v\). Then  
\[
\exp_x(v) := \gamma_{x,v}(1).
\]
In other words, \(\exp_x\) maps a tangent vector \(v\) at \(x\) to the point obtained by moving from \(x\) along the geodesic in the direction of \(v\) for a distance \(\|v\|\). Here, $\| . \|$ is the norm induced by the hyperbolic metric.

By fixing a basis of \(T_x \mathbb{H}^d\), one may identify the tangent space \(T_x \mathbb{H}^d\) with \(\mathbb{R}^d\). This allows us to regard directions in the tangent space as Euclidean vectors, making the local geometry more accessible.  

Let now \(\Sigma \subset \mathbb{H}^d\) be a fixed smooth, embedded, closed hypersurface, and suppose that there exists \(N\) a smooth normal vector field along \(\Sigma\). Thus, $N$ is a vector field defined on $\Sigma$ such that \(N(x) \perp T_x \Sigma\) and $ \| N(x) \|=1$ for every \(x \in \Sigma\). We define the normal exponential map to \(\Sigma\) by  
\[
\begin{split}
	E : \mathbb{R} \times \Sigma &\longrightarrow \mathbb{H}^d \\
	(r,x) &\longmapsto \exp_x \big(r N(x)\big).
\end{split}
\]
Suppose there exists an open interval \(I \subset \mathbb{R}\), containing the origin, such that \(E\) is injective when restricted to \(\mathcal{U} := I \times \Sigma\). Then \(E|_{\mathcal{U}}\) defines a diffeomorphism between \(\mathcal{U}\) and an open subset \(\mathcal{V} \subset \mathbb{H}^d\), foliated by hypersurfaces parallel to \(\Sigma\).

Note that since $\Sigma$ is closed and embedded we have that it divides $\mathcal{V}$ into two open connected components: $\mathcal{V} \setminus \Sigma = \mathcal{V}_+ \cup \mathcal{V}_-$. In this context, we say that $N$ points towards $\mathcal{O} \subset \mathcal{V} \setminus \Sigma$ if for any $x \in \Sigma$ there exists $\varepsilon >0$ such that $\textup{exp}_x (t N(x)) \in \mathcal{O}$ for all $t \in (0, \varepsilon)$. If we suppose that $N$ points towards $\mathcal{V}_+$ then the parameter $r$ in the normal exponential map can be defined as the \textit{signed distance to} $\Sigma$, which is given by
\begin{equation}\label{signedDistance}\notag
	r(x):=\begin{cases}
		\, \, \,  \, \textup{dist} (x, \Sigma), \quad & \forall x\in\mathcal{V}_+,\\
		-\textup{dist} (x, \Sigma), & \forall x \in \mathcal{V}_-.
	\end{cases}
\end{equation}
Thus, the above construction yields a coordinate neighbourhood in which each point \(x \in \mathcal{V}\) is described by its signed distance \(r\) to \(\Sigma\), together with the point \(x \in \Sigma\) where this distance is attained. We say then that $\mathcal{U}$ is a \textit{coordinate neighbourhood centred at }$\Sigma$ \textit{with respect to} $N$. In these coordinates, the hyperbolic metric takes the form  
\[
g_{\mathbb{H}^d} = dr^2 + g_{\Sigma_r},
\]
where \(g_{\Sigma_r}\) denotes the hyperbolic metric restricted to the parallel hypersurface \(\Sigma_r := E(r,\Sigma)\).

\subsection{Logarithmic Sobolev Inequalities}\label{sect:sobolev}

In this section we will demonstrate the series of one-dimensional logarithmic Sobolev Inequalities that we have been employing in this article. Since they appear naturally in the study of solutions of problem~\eqref{eq:heatEquationHyperbolic}, we consider these to be the natural log-Sobolev Inequalities in the hyperbolic space $\H^d$.

Let $I\subseteq \R$. For a positive, integrable function
$F \: I \to \R$ we define $\lambdaL \equiv \lambdaL(F) \geq 0$ as the best constant in the
inequality
\begin{equation*}
	\lambdaL \int_{\Omega}g
	\log\frac{g}{F}\leq \int_{\Omega}g\left|\nabla
	\log\frac{g}{F}\right|^2
\end{equation*}
%\begin{equation*}
%	\lambdaL H(g\,|\, F)\leq \int_{\Omega}g\left|\nabla
%	\log\frac{g}{F}\right|^2
%\end{equation*}
for all positive $g \in L^1(I)$ with
$\int_\Omega g = \int_\Omega F$ (understanding the right-hand side to
be equal to $+\infty$ whenever $\log \frac{g}{F}$ does not have a weak
gradient, or when its weak gradient is defined but the integral on the
right-hand side is infinite). We say that $F$ satisfies a logarithmic
Sobolev Inequality (also known as Gross Inequality) when $\lambdaL(F) > 0$. Notice that integral in the left-hand side of the previous inequality is nothing but the entropy of $g$ relative to $F$, that is, $H(g|F)$.

%For later use, we also
%denote by $\lambdaP \equiv \lambdaP(F) \geq 0$ the best constant in
%the Poincaré inequality
%\begin{equation*}
%	\lambdaP \int_\Omega \left( 1 - \frac{g}{F} \right)^2 F
%	\leq \int_{\Omega} \left|\nabla \frac{g}{F}\right|^2 F,
%\end{equation*}
%for all $g \in L^1(\Omega)$ with $\int_\Omega g = \int_\Omega F$ (with
%the same understanding as before: the right-hand side is equal to
%$+\infty$ unless $g/F$ has a weak gradient in $L^2(F)$),

There are several results that will be useful for us when estimating
the logarithmic Sobolev constant $\lambdaL$. The first one is a
consequence of the well-known curvature-dimension condition
\cite{Ane-Blachere-Chafa-Fougeres-Gentil-Malrieu-Roberto-Scheffer-2000,Bakry2014}, and this particular statement can be
obtained from the theory presented in \cite[Section 5]{Ane-Blachere-Chafa-Fougeres-Gentil-Malrieu-Roberto-Scheffer-2000} and
the proof of \textit{Corollaire 5.5.2} therein:
\begin{lem}
	\label{lem:curvature-logsob}
	Let $r_0 \in\R$,  $F \: (r_0,+\infty) \to (0,+\infty)$ be a positive,
	integrable function of the form
	\begin{equation*}
		F(x) = C e^{-\Phi(x)},
		\qquad x > r_0,
	\end{equation*}
	with $C > 0$ and $\Phi \: (r_0,+\infty) \to \R$ a convex,
	$\mathcal{C}^2$ function with $\Phi''(x) \geq \Lambda$ for all $x >
	r_0$. Then the logarithmic Sobolev inequality
	\begin{equation*}
		2 \Lambda\cdot H(g \,|\, F)
		\leq \int_{r_0}^\infty g \left| \nabla  \log\frac{g}{F} \right|^2
	\end{equation*}
	holds for all nonnegative $g \in L^1(r_0,+\infty)$ with $\int_\Omega g = \int_\Omega F$.
\end{lem}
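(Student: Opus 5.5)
The plan is to follow the Bakry--Émery $\Gamma_2$ route. It is natural here because $F=Ce^{-\Phi}$ is a one-dimensional log-concave weight with uniform convexity $\Phi''\geq\Lambda$. On a half-line the boundary point $r_0$ has to be handled, so I would first reduce to the case of the whole real line, or at least make sure the semigroup used is the one with Neumann (reflecting) boundary conditions at $r_0$.

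Normalize $F$ to a probability measure. This is harmless: the constant $C$ and the mass constraint $\int g=\int F$ only rescale both sides identically, so it suffices to treat $\int F=1$. Set $f=g/F$, so that $\int f\,F=1$ and
\[
H(g\,|\,F)=\int f\log f\,F,\qquad \int g\,\bigl|\partial_x\log\tfrac gF\bigr|^2=\int \frac{|f'|^2}{f}\,F .
\]
The claim then becomes the standard inequality $\mathrm{Ent}_F(f)\leq \frac{1}{2\Lambda}I_F(f)$. Consider the Ornstein--Uhlenbeck-type generator $Lf=f''-\Phi' f'$ on $(r_0,\infty)$ with Neumann condition $f'(r_0)=0$. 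It is symmetric in $L^2(F)$, and its carré du champ is $\Gamma(f)=|f'|^2$. A direct computation gives
\[
\Gamma_2(f)=|f''|^2+\Phi''|f'|^2\geq \Lambda\,\Gamma(f),
\]
which is the curvature-dimension condition $CD(\Lambda,\infty)$.

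The main steps are as follows.
\begin{enumerate}
\item Along the semigroup $P_sf$, compute $\frac{\d}{\d s}\mathrm{Ent}_F(P_sf)=-I_F(P_sf)$.
\item Use $\Gamma_2\geq\Lambda\Gamma$ to obtain the exponential decay $I_F(P_sf)\leq e^{-2\Lambda s}I_F(f)$.
\item Integrate in $s$ from $0$ to $\infty$, using that $P_sf\to 1$ so that the entropy tends to $0$. This yields $\mathrm{Ent}_F(f)\leq \frac{1}{2\Lambda}I_F(f)$, which is exactly the stated inequality.
\end{enumerate}
This is precisely the argument of \cite[Section 5, Corollaire 5.5.2]{Ane-Blachere-Chafa-Fougeres-Gentil-Malrieu-Roberto-Scheffer-2000}, so in the write-up I would invoke it and only check its hypotheses in our setting.

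I expect the main obstacle to be step 2 on the half-line. In the proof of the decay of the Fisher information one writes $\frac{\d}{\d s}I_F(P_sf)$ and integrates by parts twice, and this produces boundary terms at $r_0$. With Neumann conditions $\partial_x P_sf(r_0)=0$, the boundary term has the form $\bigl[\partial_x(\cdots)\,\partial_x P_sf\bigr]_{r_0}$ and therefore vanishes. This is exactly where convexity of the interval matters: the boundary carries no second fundamental form in one dimension.

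An alternative that avoids the semigroup altogether is to extend $\Phi$ to all of $\R$ by setting $\Phi=+\infty$ on $(-\infty,r_0]$. The resulting measure is still $\Lambda$-uniformly log-concave, viewed as the limit of the smooth convex potentials $\Phi+k\,(r_0-x)_+^2$, $k\to\infty$, after mollification. One then applies the whole-line Bakry--Émery criterion, or the Caffarelli contraction/Holley--Stroock-free argument, to these approximants and passes to the limit. The limit passage is justified by monotone and dominated convergence in both the entropy and the Fisher information.

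Finally, the extension from smooth bounded $f$ to general nonnegative $g\in L^1$ is routine. One truncates, $f_\epsilon=\min(\max(f,\epsilon),1/\epsilon)$, applies the inequality, and passes to the limit $\epsilon\to0$. The case where the right-hand side is infinite is trivial by the convention adopted in the definition of $\lambdaL$.
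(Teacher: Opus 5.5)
Your proposal is correct and follows the same route as the paper, which gives no independent argument but simply appeals to the Bakry--\'Emery $\Gamma_2$ criterion of \cite[Section 5, Corollaire 5.5.2]{Ane-Blachere-Chafa-Fougeres-Gentil-Malrieu-Roberto-Scheffer-2000}; your constants also match, since $\Phi''\geq\Lambda$ yields $2\Lambda\,H(g\,|\,F)\leq\int g\,|\partial_x\log(g/F)|^2$. Your treatment of the endpoint $r_0$ supplies a detail the paper leaves implicit, but note that in the paper's own application ($V_1$, where $\sinh^{d-1}(r)\to 0$ as $\rho\to\rho_0(\tau)$) the potential blows up at $r_0$: there the boundary terms vanish because $F$ vanishes, not because of an imposed Neumann condition, and your approximation route must first modify $\Phi$ near $r_0$ before extending it to $\R$.
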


We also cite a well known result on perturbation of these
inequalities by \cite{Holley1987}:
\begin{lem}
	\label{lem:HolleyStroock}
	Let $I \subseteq \R$ be an open set, $F \: I \to \R$ a
	positive, integrable function which satisfies a logarithmic Sobolev
	inequality with constant $\lambda_L$. Let $A \: I \to \R$ be a
	measurable function such that $|A|$ is bounded. Then the function
	\begin{equation*}
		\widetilde{F}(x) = F(x) e^{-A(x)}, \qquad x \in I,
	\end{equation*}
	also satisfies a logarithmic Sobolev inequality with constant
	$\lambda_L e^{\operatorname{osc}(A)}$, where
	$\operatorname{osc}(A) := \sup A - \inf A$.
\end{lem}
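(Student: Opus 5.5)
The plan is the classical Holley--Stroock argument. It rests on a variational formula for the entropy that is monotone under comparable reference measures. One correction first: the argument yields the perturbed constant $\lambdaL e^{-\operatorname{osc}(A)}$, which is the standard Holley--Stroock form. The "$e^{\operatorname{osc}(A)}$" in the statement looks like a sign slip. If $A$ is constant the constant should not change, and for non-constant $A$ it can only get worse, so $e^{-\operatorname{osc}(A)}$ is what I will aim to prove.

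\textbf{Step 1 (reformulate with probability measures).} Set $Z=\int_I F$, $\widetilde Z=\int_I \widetilde F$, $d\mu = F\,dx/Z$ and $d\widetilde\mu=\widetilde F\,dx/\widetilde Z$. For a positive $f$, put $\operatorname{Ent}_\nu(f)=\int f\log f\,d\nu-\int f\,d\nu\,\log\int f\,d\nu$. Both sides of the log-Sobolev inequality are homogeneous of degree one in $g$, so the mass constraint can be removed. Writing $g=fF$, the hypothesis on $F$ is then equivalent to
\[
\lambdaL\,\operatorname{Ent}_\mu(f)\le \int_I \frac{|\nabla f|^2}{f}\,d\mu \quad\text{for all positive } f,
\]
because $\nabla\log(g/F)=\nabla \log f$. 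Similarly, the desired inequality for $\widetilde F$ is, with $g=f\widetilde F$,
\[
\lambdaL e^{-\operatorname{osc}(A)}\operatorname{Ent}_{\widetilde\mu}(f)\le \int_I \frac{|\nabla f|^2}{f}\,d\widetilde\mu .
\]

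\textbf{Step 2 (variational formula).} I will use
\[
\operatorname{Ent}_\nu(f)=\inf_{c>0}\int \big(f\log f-f\log c-f+c\big)\,d\nu .
\]
The integrand is pointwise nonnegative by convexity of $s\mapsto s\log s$, and the infimum is attained at $c=\int f\,d\nu$.

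\textbf{Step 3 (compare the measures).} Since $d\widetilde\mu=(Z/\widetilde Z)e^{-A}\,d\mu$, we have $d\widetilde\mu\le (Z/\widetilde Z)e^{-\inf A}\,d\mu$. Because the integrand in Step 2 is nonnegative, this comparison passes through the integral for each fixed $c$, and hence through the infimum:
\[
\operatorname{Ent}_{\widetilde\mu}(f)\le \tfrac{Z}{\widetilde Z}e^{-\inf A}\operatorname{Ent}_\mu(f)\le \tfrac{Z}{\widetilde Z}e^{-\inf A}\lambdaL^{-1}\int\frac{|\nabla f|^2}{f}\,d\mu .
\]
In the other direction, $d\mu=(\widetilde Z/Z)e^{A}\,d\widetilde\mu\le(\widetilde Z/Z)e^{\sup A}\,d\widetilde\mu$, and the Fisher-information integrand is nonnegative. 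Combining the two comparisons, the normalizations $Z,\widetilde Z$ cancel and we get
\[
\operatorname{Ent}_{\widetilde\mu}(f)\le \lambdaL^{-1}e^{\operatorname{osc}(A)}\int\frac{|\nabla f|^2}{f}\,d\widetilde\mu .
\]

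\textbf{Main obstacle.} The delicate point is bookkeeping rather than analysis. The gradient term for $\widetilde F$ involves $\log(g/\widetilde F)$, not $\log(g/F)$, and one cannot compare the two directly. The fix is to parametrize everything by $f=g/\widetilde F$ and apply the hypothesis on $F$ to the auxiliary function $fF$, whose relative Fisher information with respect to $F$ is exactly $\int|\nabla f|^2/f\,d\mu$. The mass constraints are handled by the homogeneity in Step 1. Measurability and boundedness of $A$ are all that is needed: $A$ never has to be differentiated. The degenerate cases (no weak gradient, or infinite right-hand side) are trivial under the stated convention that the right-hand side is then $+\infty$.
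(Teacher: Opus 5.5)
Your proof is correct. It is the classical Holley--Stroock argument (variational formula for the entropy, two-sided comparison of the reference measures, and parametrizing by $f=g/\widetilde F$ so that $A$ is never differentiated), which is exactly what the paper relies on by citing Holley and Stroock instead of giving a proof. You are also right that the printed constant is a sign slip and should be $\lambdaL e^{-\operatorname{osc}(A)}$. A sounder justification than your heuristic (a non-constant $A$ can also improve the constant, e.g.\ by removing a barrier) is this: applying the printed version to $A$ and then to $-A$ would give $\lambdaL(F)\ge \lambdaL(F)\,e^{2\operatorname{osc}(A)}$, which is impossible when $0<\lambdaL(F)<\infty$ and $\operatorname{osc}(A)>0$.
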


We present now the result needed for our study.

\begin{cor}\label{cor:log_sobolev}
	Fix $d\geq 2$, $\tau>0$ and let $r(\tau, \rho):= (d-1)e^\tau + e^\frac{\tau}{2}\rho$, $\rho_0(\tau) = -(d-1)e^\frac{\tau}{2}$ and $C(\tau)= C+o(\tau)$. Then, the functions
	\[
	V_1(\tau, \rho): = C(\tau)\cdot \sinh^{d-1}\big(r(\tau, \rho)\big)\cdot e^{- \frac{\left(\rho + 2(d-1)e^\frac{\tau}{2}\right)^2}{4}},
	\]
	and
	\[
	V_2(\tau, \rho): = C(\tau)\cdot \cosh^{d-1}\big(r(\tau, \rho)\big)\cdot e^{- \frac{\left(\rho + 2(d-1)e^\frac{\tau}{2}\right)^2}{4}},
	\]
	satisfy a log-Sobolev Inequality with constant $\lambdaL= 1$ on the interval $I=(\rho_0(\tau), \infty)$. That is, for $i=1, 2$,
	\[
	\int_{\rho_0(\tau)}^\infty v \cdot\ln\left(\frac{v}{V_i}\right) \leq \int_{\rho_0(\tau)}^\infty v \cdot\left|\partial_\rho\ln\left(\frac{v}{V_i}\right)\right|^2,
	\]
	for all nonnegative $v$ such that $\int_I v = \int_I V_i$
\end{cor}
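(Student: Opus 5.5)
The plan is to reduce both cases to Lemma~\ref{lem:curvature-logsob} by writing $V_i=C(\tau)e^{-\Phi_i}$ on $I=(\rho_0(\tau),\infty)$ for fixed $\tau$, and to show that $\Phi_i''\geq \tfrac12$. Lemma~\ref{lem:curvature-logsob} then gives the inequality with constant $2\cdot\tfrac12=1$. The prefactor $C(\tau)$ plays no role, since it only enters through the mass constraint $\int_I v=\int_I V_i$. Throughout I use $r_\rho=e^{\tau/2}$ from \eqref{derivatives}.

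\emph{Case $V_1$.} Here
\[
\Phi_1(\rho)=\frac{(\rho+2(d-1)e^{\tau/2})^2}{4}-(d-1)\ln\sinh\big(r(\tau,\rho)\big),
\]
and a direct differentiation gives
\[
\Phi_1''(\rho)=\frac12+(d-1)e^{\tau}\,\frac{1}{\sinh^2(r)}\geq\frac12 .
\]
This holds because $(\ln\sinh)''=-\sinh^{-2}$. Thus $\Phi_1$ is $C^2$ and uniformly convex with $\Lambda=\tfrac12$ on $I$, and Lemma~\ref{lem:curvature-logsob} yields the stated inequality with $\lambda_L=1$.

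\emph{Case $V_2$.} The same computation gives
\[
\Phi_2''(\rho)=\frac12-(d-1)e^{\tau}\cosh^{-2}(r),
\]
which is not bounded below by $\tfrac12$. It is even negative near $\rho_0(\tau)$, so Bakry--Émery cannot be applied to this decomposition directly. I would instead use the horospheric identity $e^{(d-1)r}e^{-(\rho+2(d-1)e^{\tau/2})^2/4}=e^{-\rho^2/4}$, which already appeared in Section~\ref{sect:horospheric}. It rewrites
\[
V_2=2^{1-d}C(\tau)\,\big(1+e^{-2r(\tau,\rho)}\big)^{d-1}e^{-\rho^2/4}.
\]
So $V_2$ is the Gaussian $e^{-\rho^2/4}$, which has LSI constant exactly $1$ on any interval, multiplied by a factor that is decreasing in $\rho$, bounded by $2^{d-1}$, and equal to $1+O(e^{-2r})$ away from $\rho_0$.

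To keep the constant equal to $1$, I would use the one-dimensional contraction principle rather than Holley--Stroock, because the latter would lose a factor $e^{-\mathrm{osc}}$. The steps are as follows. Let $\gamma$ be the normalized restriction of $e^{-\rho^2/4}$ to $I$, and let $T:I\to I$ be the monotone rearrangement pushing $\gamma$ to $V_2/\!\int V_2$. If $T$ is $1$-Lipschitz, then the LSI for $\gamma$ with constant $1$ transfers verbatim to $V_2$, by composing test functions with $T$. In one dimension the Lipschitz property is equivalent to $V_2(T(\rho))/\!\int V_2\geq\gamma(\rho)$ along the coupling. I would try to verify this by comparing the cumulative distribution functions, using that the extra weight $(1+e^{-2r})^{d-1}$ is monotone and tends to $1$. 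Since $T$ is monotone, it shifts mass toward $\rho_0$, where the weight is largest.

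I expect this Lipschitz verification for $V_2$ to be the main obstacle. The region where $\Phi_2''<\tfrac12$ has $r=O(1)$, that is $\rho-\rho_0(\tau)=O(e^{-\tau/2})$. It carries Gaussian mass of order $e^{-(d-1)^2e^\tau}$, so any loss there should be negligible. Still, making the comparison of distribution functions rigorous near the endpoint $\rho_0(\tau)$, uniformly in $\tau>0$, is the delicate point. If the direct comparison fails near $\rho_0$, I would first try to handle that endpoint layer by a separate Hardy-type (Muckenhoupt) estimate, and then glue the two regions together.
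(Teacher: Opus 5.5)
Your treatment of $V_1$ is correct and is exactly the paper's argument: $\Phi_1''=\tfrac12+(d-1)e^{\tau}\sinh^{-2}r\ge\tfrac12$, then Lemma~\ref{lem:curvature-logsob}. For $V_2$ the paper uses your rewriting $V_2=2^{1-d}C(\tau)(1+e^{-2r})^{d-1}e^{-\rho^2/4}$ and then Holley--Stroock. You are right that this only gives $\lambda_{\mathrm L}\ge 2^{1-d}$; the appendix states Holley--Stroock with $e^{\operatorname{osc}(A)}$ where it should be $e^{-\operatorname{osc}(A)}$. Your replacement, however, has a gap that cannot be closed. The truncated Gaussian $\gamma$ on the half-line $I$ has $\lambda_{\mathrm L}=1$ exactly, since tilts $g\propto e^{a\rho}\gamma$ with $a\to+\infty$ saturate it. So you need $T$ to be an exact contraction, and the monotone map is not one. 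Take $d=2$ and $\tau$ large, and work in $x=r=e^{\tau/2}(\rho-\rho_0)$. Near the endpoint, $\gamma\propto e^{x/2}$ and $\nu=(1+e^{-2x})\gamma$, up to a normalising factor super-exponentially close to $1$. Mass balance gives $e^{x/2}=e^{X/2}+\tfrac13(1-e^{-3X/2})$ for $X=T(x)$. Hence $T'=e^{x/2}/\big((1+e^{-2X})e^{X/2}\big)>1$ as soon as $e^{-3X/2}<\tfrac14$; for example $T'\approx1.10$ at $X=2$.

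Worse, for large $d$ the inequality for $V_2$ with constant $1$ is false, so no Hardy/Muckenhoupt gluing can rescue it. In the variable $x$, $V_2\propto\exp\{(d-1)q(x)-e^{-\tau}x^2/4\}$ with $q(x)=\ln\cosh x-\tfrac x2$. This $q$ decreases on $[0,x^*]$, $x^*=\operatorname{artanh}\tfrac12$, down to $q(x^*)\approx-0.13$, so $V_2$ has a maximum at $\rho_0$ followed by a dip of depth $e^{-0.13(d-1)}$. Test the Poincar\'e inequality with $f=1$ on $[0,x^*]$ and $f$ decreasing linearly to $0$ on $[x^*,x^*+(d-1)^{-1/2}]$. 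In the $\rho$ variable this gives a Rayleigh quotient $\lesssim e^{\tau}(d-1)^{3/2}e^{-0.13(d-1)}$, which is about $0.16$ for $d=61$, $\tau=0.01$, hence below $\tfrac12$. Linearising the log-Sobolev inequality gives $\lambda_{\mathrm L}\le 2\lambda_{\mathrm P}$, where $\lambda_{\mathrm P}$ is the Poincar\'e constant, so this forces $\lambda_{\mathrm L}(V_2)<1$. The flaw in your heuristic is that a region of tiny mass is harmless only if it is not a bottleneck. Log-Sobolev and Poincar\'e constants see the density at the neck relative to the mass it encloses, not the mass itself. What can honestly be proved for $V_2$ is the Holley--Stroock bound $\lambda_{\mathrm L}\ge 2^{1-d}$. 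Fortunately $V_2$ is not used anywhere else in the paper: all the entropy estimates use $V_1$ or the pure Gaussian $e^{-\rho^2/4}$.
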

\begin{proof}
	The inequality for $V_1$ is a direct consequence of Lemma~\ref{lem:curvature-logsob}, while the one for $V_2$ comes from Lemma~\ref{lem:HolleyStroock} after checking that
	\[
	\cosh^{d-1}(r(\tau,\rho))\in \left[2^{1-d}e^{(d-1)^2e^\tau + (d-1)e^\frac{\tau}{2}\rho} ,\ e^{(d-1)^2e^\tau + (d-1)e^\frac{\tau}{2}\rho}\right]
	\]
	and thus $V_2$ is trapped between two gaussian profiles, for which the log-Sobolev inequality with constant $\lambdaL=1$ is known.
\end{proof}

\subsection{Estimates for the radially symmetric transient equilibrium}\label{sec:UsefulBounds}
In this section we collect some estimates for the transient equilibrium obtained in Section \ref{sect:elliptic}. We recall that this is the function
\[
V(t,r) = C(t)\cdot \sinh^{d-1}(r)\cdot e^{- \frac{\left(r + (d-1)(t+1)\right)^2}{4(t+1)}}.
\]
where we choose the function $C(t)$ such that
\[
\mathcal{M}_{\mathcal{R}} = \int_{\rho_0(\tau)}^\infty V(\tau, \rho)\ \d \rho,
\]
where
\[
t(\tau)=e^\tau-1,\quad r(\tau,\rho)= (d-1)e^\tau  + e^\frac{\tau}{2}\rho.
\]

\begin{lem}\label{lem:constant_C(t)}
	There exists a constant $k_d$ depending only on the dimension $d$ such that
	\begin{equation*}
		-k_d\  e^{-m_d (t+1)}\leq \frac{\partial_t C(t)}{C(t)}\leq k_d\  e^{-m_d (t+1)},
	\end{equation*}
	where
	\[
	m_d := \min \left(d-1, \frac{(d-1)^2}{16}\right).
	\]
	Moreover,
	\begin{equation*}
		\frac{2^{d-2}\mathcal{M}_{\mathcal{R}}}{\sqrt{\pi} }\leq C(t)\leq \frac{2^{d-2}\mathcal{M}_{\mathcal{R}}}{(1-e^{-(d-1)(t+1)})^{d-1}\left[\sqrt{\pi} - \frac{1}{\sqrt{\lambda_1 (t+1)}}e^{-\frac{\lambda_1 (t+1)}{4}}\right]}
	\end{equation*}
	and, as a consequence,
	\[
	C_\infty:=\lim\limits_{t\to\infty} C(t) = \frac{2^{d-2}\mathcal{M}_{\mathcal{R}}}{\sqrt{\pi} }.
	\]
\end{lem}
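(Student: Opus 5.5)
The plan is to write the mass constraint as an explicit integral in the original variables and study it as a function of $t$. Set $T=t+1$. Undoing the change of variables ($\d\rho = e^{-\tau/2}\d r$), the relation \eqref{eq_definC} reads
\[
\mathcal{M}_{\mathcal{R}} = C(t)\, I(T),\qquad I(T):=T^{-\frac12}\int_0^\infty \sinh^{d-1}(r)\, e^{-\frac{(r+(d-1)T)^2}{4T}}\,\d r .
\]
So $C=\mathcal{M}_{\mathcal{R}}/I$. Both claims about $C$ then reduce to statements about $I$: the bound on $\partial_tC/C$ becomes a bound on $-I'(T)/I(T)$, and the two-sided bounds on $C$ become bounds on $I$.

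For the two-sided bounds, write $\sinh^{d-1}(r)=2^{1-d}e^{(d-1)r}(1-e^{-2r})^{d-1}$ and complete the square. Since
\[
(d-1)r-\frac{(r+(d-1)T)^2}{4T} = -\frac{(r-(d-1)T)^2}{4T},
\]
we get
\[
I(T)=2^{1-d}T^{-\frac12}\int_0^\infty (1-e^{-2r})^{d-1}e^{-\frac{(r-(d-1)T)^2}{4T}}\,\d r .
\]
\emph{Upper bound on $I$.} Drop the factor $(1-e^{-2r})^{d-1}\le 1$ and extend the integral to $\R$. This gives $I\le 2^{2-d}\sqrt{\pi}$, hence $C\ge 2^{d-2}\mathcal{M}_{\mathcal{R}}/\sqrt{\pi}$.

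\emph{Lower bound on $I$.} Restrict to $r\ge (d-1)T/2$. There $(1-e^{-2r})^{d-1}\ge (1-e^{-(d-1)T})^{d-1}$. The remaining Gaussian integral equals $2\sqrt{\pi T}$ minus a tail. After the substitution $s=(r-(d-1)T)/(2\sqrt T)$, the tail is over $s<-\sqrt{\lambda_1 T}/2$, and the standard estimate $\int_a^\infty e^{-s^2}\d s\le e^{-a^2}/(2a)$ bounds it. This yields the stated upper bound on $C(t)$, and letting $t\to\infty$ in both bounds gives $C_\infty$. I will fix the precise exponent in the tail while computing; the paper writes $e^{-\lambda_1 T/4}$, and if the sharper $e^{-\lambda_1T/4}$ versus $e^{-\lambda_1 T}$ causes a mismatch I will simply keep the weaker one.

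For the logarithmic derivative, set $J(T):=\sqrt T\, I(T)$ and differentiate under the integral sign:
\[
\frac{I'}{I}=-\frac{1}{2T}+\frac{J'}{J},\qquad \partial_T\!\left(-\frac{(r-(d-1)T)^2}{4T}\right)=\frac{(r-(d-1)T)^2}{4T^2}+\frac{(d-1)(r-(d-1)T)}{2T}.
\]
On the full line, with the weight $(1-e^{-2r})^{d-1}$ replaced by $1$, this quantity integrates exactly to the Gaussian value, so $I'=0$ and $I$ is constant. Hence $I'/I$ is entirely controlled by two defects: the missing region $r<0$, and the deficit $1-(1-e^{-2r})^{d-1}\le (d-1)e^{-2r}$. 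I will bound each defect against the weight $e^{-(r-(d-1)T)^2/4T}$ times the polynomial factor above, splitting at $r=(d-1)T/2$. For $r\le (d-1)T/2$ the Gaussian is at most $e^{-(d-1)^2T/16}$, which is where the exponent $(d-1)^2/16$ in $m_d$ comes from. For $r\ge (d-1)T/2$ the deficit is at most $(d-1)e^{-(d-1)T}$, giving the exponent $d-1$. Polynomial prefactors in $T$ are absorbed by slightly lowering the exponents or enlarging $k_d$; that is harmless for $T\ge1$ with $m_d$ as stated, since the $1/16$ leaves room. Dividing by $I\ge c_d>0$, from the lower bound above, gives $|\partial_tC/C|=|I'/I|\le k_d e^{-m_dT}$.

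The main obstacle is this last step. One has to organise the cancellation cleanly, writing $I'$ as an integral of the $T$-derivative against the difference between the true weight and its Gaussian surrogate, and then check that the polynomial-in-$T$ prefactors really are absorbed.
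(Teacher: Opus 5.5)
Your proof is correct. For the two-sided bounds on $C(t)$ it is the paper's argument: restricting to $r\ge (d-1)T/2$ and using $\int_a^\infty e^{-s^2}\d s\le e^{-a^2}/(2a)$ with $a=\sqrt{\lambda_1T}/2$ gives exactly the term $\frac{1}{\sqrt{\lambda_1 T}}e^{-\lambda_1T/4}$ in the statement, so the hedge about the exponent is unnecessary.

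For $\partial_tC/C$ you organise the argument differently. In your variable $T$, the paper differentiates the mass constraint to get
\[
\frac{\partial_t C}{C}=\frac{(d-1)^2T+2}{4T}-\frac{1}{4T^2}\,\frac{\mathcal{I}}{\mathcal{II}},
\]
where $\mathcal{I}$ and $\mathcal{II}$ are the second and zeroth moments of $(1-e^{-2r})^{d-1}e^{-(r-(d-1)T)^2/4T}$ on $(0,\infty)$. It bounds $\mathcal{I}$ and $\mathcal{II}$ separately from above and below by explicit truncated-Gaussian computations. It then checks by hand that the leading parts cancel against $\frac{(d-1)^2T+2}{4T}$, leaving remainders of size $e^{-(d-1)T}$ and $e^{-\lambda_1T/4}$. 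You instead use that the full-line Gaussian $g=T^{-1/2}e^{-(r-(d-1)T)^2/4T}$ has constant mass $2\sqrt\pi$. This gives $I'=-2^{1-d}\int_{\R}\delta\,\partial_Tg\,\d r$ with $\delta=1-\1_{r>0}(1-e^{-2r})^{d-1}\in[0,1]$. The cancellation is then exact by construction, and you only need $\inf_{T\ge 1}I>0$ rather than a matched pair of upper and lower bounds. The paper's route, in exchange, produces explicit constants.

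There are two things to tighten.

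First, on $\{r\le (d-1)T/2\}$ the region is unbounded and $\partial_Tg/g$ contains $\frac{(r-(d-1)T)^2}{4T^2}$. So you must integrate the Gaussian tail against this polynomial weight (e.g.\ $\int_a^\infty s^2e^{-s^2}\d s\le(\frac a2+\frac{\sqrt\pi}{4})e^{-a^2}$), not bound the Gaussian by its supremum. Done this way, both regions give $|I'|\lesssim T^{-1/2}\bigl(e^{-(d-1)^2T/16}+e^{-(d-1)T}\bigr)$, so there is no polynomial factor to absorb at all.

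Second, for $d=2$ the bracket $\sqrt\pi-\frac{1}{\sqrt{\lambda_1T}}e^{-\lambda_1T/4}$ is about $-0.11$ at $T=1$. So your explicit lower bound does not give $I\ge c_d>0$ there. Take $c_d$ instead from continuity and positivity of $I$ on $[1,\infty)$ together with $I\to 2^{2-d}\sqrt\pi$.

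The same sign issue means the stated upper bound on $C(t)$ is actually false for $d=2$ and $t$ near $0$; it only holds once the bracket is positive. This is a defect of the statement that the paper's proof shares, since it also divides by that lower bound for $\mathcal{II}$. Your route avoids it for the derivative estimate.
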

\begin{proof}
	For the sake of simplicity, we will consider $t(\tau)=e^\tau$ in the computations instead of the real change of variables $t(\tau)=e^\tau-1$. This does not affect the results, it is just a translation in time; instead of $t\in [0,\infty)$ we will work on the time interval $t\in[1, \infty)$, and then a map $t\to t+1$ will undo de translation.
	
	In order to estimate the function $C(t)$ we notice that $V$ must have constant mass, since a solution of equation~\eqref{eq:main_spherical_coordinates_symmetric_datum} does so. Therefore,
	\begin{equation}\label{eq:equality_C_radial_solutions}
		\begin{aligned}
			\mathcal{M}_{\mathcal{R}} = \int_{\rho_0(\tau)}^\infty V(\tau, \rho)\ \d \rho &= \frac{C(t)}{\sqrt{t}}\int_{0}^\infty \sinh^{d-1}(r)\cdot e^{- \frac{\left(r + (d-1)t\right)^2}{4t}}\ \d r
		\end{aligned}
	\end{equation}	
	for all $t\geq 1$. Using implicit derivation in the equation in~\eqref{eq:equality_C_radial_solutions} it is possible to estimate $\partial_t C(t)/C(t)$. It can be verified, after some computations, that
	\begin{equation}\label{eq:quotient_C(t)}
		\frac{\partial_t C(t)}{C(t)} = \frac{(d-1)^2t+2}{4t} - \frac{1}{4t^2}\cdot \frac{\mathcal{I}}{\mathcal{II}},
	\end{equation}
	where
	\[
	\mathcal{I}:= \int_0^\infty r^2 (1-e^{-2r})^{d-1}\ e^{- \frac{\left(r - (d-1)t\right)^2}{4t}}\ \d r,\quad  \mathcal{II}:= \int_0^\infty (1-e^{-2r})^{d-1}\ e^{- \frac{\left(r - (d-1)t\right)^2}{4t}}\ \d r.
	\]
	
	Let us begin with a lower estimate  for ~\eqref{eq:quotient_C(t)}.
	%	
	%	First, we estimate the quotient~\eqref{eq:quotient_C(t)} on a compact interval $[1, T^*_d]$, for a $T^*_d\geq 1$ to be chosen later. Since the quotient is finite at time $t=1$ and the functions of time involved in its formulas are continuous , it is clear that there must exist a constant $K(T^*_d)$ such that
	%	\[
	%	-K(T^*_d)\leq \frac{\partial_t C(t)}{C(t)}\leq  K(T^*_d) \quad\text{for all}\quad t\in [1, T^*_d].
	%	\]
	%	
	%	Now, for times bigger than $T^*_d$.
	We estimate each integral separately. First, with a change of variables and using that $1-e^{-2r}<1$ ,
	\[
	\begin{aligned}
		\mathcal{I} &\leq  2\sqrt{t}\int_{-\sqrt{\lambda_1 t}}^\infty \left(2\sqrt{t}z + (d-1)t\right)^2 \ e^{-z^2}\ \d z\\[10pt]
		& = \underbrace{8t^\frac{3}{2}\int_{-\sqrt{\lambda_1 t}}^\infty z^2 \ e^{-z^2}\ \d z}_{I_1} + \underbrace{8(d-1)t^2 \int_{-\sqrt{\lambda_1 t}}^\infty z \ e^{-z^2}\ \d z}_{I_2} + \underbrace{2 (d-1)^2 t^\frac{5}{2} \int_{-\sqrt{\lambda_1 t}}^\infty  e^{-z^2}\ \d z}_{I_3},
	\end{aligned}
	\]
	where $\lambda_1$ comes from~\eqref{eq:definition_bottom_spectrum_laplacian}. The integral $I_1$ can be bounded with an integration by parts and using that
	\[
	\int_{-\sqrt{\lambda_1 t}}^\infty e^{-z^2}\d z\leq  \int_{-\infty}^\infty e^{-z^2}\d z = \sqrt{\pi}
	\]
	by
	\[
	I_1\leq 2\sqrt{t}\left(2\sqrt{\pi}\ t - (d-1)t^\frac{3}{2}e^{-\lambda_1 t}  \right),
	\]
	while $I_2$ can be computed explicitly as
	\[
	I_2 = 4(d-1)t^2 e^{-\lambda_1 t}
	\]
	and
	\[
	I_3 \leq 2 (d-1)^2 \sqrt{\pi}\  t^\frac{5}{2}.
	\]
	In total,
	\[
	\mathcal{I}\leq 2\sqrt{t}\left((d-1)^2\sqrt{\pi}\ t^2  +2\sqrt{\pi}\ t  + (d-1)t^\frac{3}{2}e^{-\lambda_1 t} \right)
	\]
	Second, we compute for $\mathcal{II}$,
	\[
	\begin{aligned}
		\mathcal{II} &= 2\sqrt{t} \int_{-\sqrt{\lambda_1 t}}^{\infty} \left(1- e^{-2(2\sqrt{t}z+(d-1)t)}\right)^{d-1} e^{-z^2}\ \d z\\[10pt]
		&\geq 2\sqrt{t} \int_{-\frac{\sqrt{\lambda_1 t}}{2}}^{\infty} \left(1- e^{-2(2\sqrt{t}z+(d-1)t)}\right)^{d-1} e^{-z^2}\ \d z\\[10pt]
		& \geq  2\sqrt{t} \left(1- e^{-(d-1)t}\right)^{d-1} \int_{-\frac{\sqrt{\lambda_1 t}}{2}}^{\infty}  e^{-z^2}\ \d z\\[10pt]
		&= 2\sqrt{t} \left(1- e^{-(d-1)t}\right)^{d-1}\left[ \int_{-\infty}^{\infty}  e^{-z^2}\ \d z - \int_{-\infty}^{-\frac{\sqrt{\lambda_1 t}}{2}}  e^{-z^2}\ \d z \right]\\[10pt]
		&= 2\sqrt{t} \left(1- e^{-(d-1)t}\right)^{d-1}\left[\sqrt{\pi} - \int_{-\infty}^{-\frac{\sqrt{\lambda_1 t}}{2}}  e^{-z^2}\ \d z \right]\\[10pt]
		&\geq 2\sqrt{t} \left(1- e^{-(d-1)t}\right)^{d-1}\left[\sqrt{\pi} +\frac{1}{\sqrt{\lambda_1 t}} \int_{-\infty}^{-\frac{\sqrt{\lambda_1 t}}{2}} 2z e^{-z^2}\ \d z \right]\\[10pt]
		&= 2\sqrt{t} \left(1- e^{-(d-1)t}\right)^{d-1}\left[\sqrt{\pi} - \frac{1}{\sqrt{\lambda_1 t}}e^{-{\frac{\lambda_1 t}{4}}} \right].
	\end{aligned}
	\]
	In total, we arrive to
	\[
	\begin{aligned}
		\frac{\partial_t C(t)}{C(t)}&\geq \frac{(d-1)^2t+2}{4t} - \frac{(d-1)^2\sqrt{\pi}\ t^2  +2\sqrt{\pi}\ t  + (d-1)t^\frac{3}{2}e^{-\lambda_1 t}}{4t^2\left(1-e^{-(d-1)t}\right)^{d-1} \left(\sqrt{\pi} - \frac{1}{\sqrt{\lambda_1 t}}e^{-\frac{\lambda_1 t}{4}}\right)}\\[10pt]
		&=\frac{(d-1)^2t+2}{4t}\cdot h(t) - \frac{\left(\frac{2\sqrt{t}}{\sqrt{\lambda_1}}+2(d-1)t^{\frac{3}{2}}\right)e^{-\frac{\lambda_1 t}{4}} + (d-1)t^\frac{3}{2}e^{-\lambda_1 t} }{4t^2\left(1-e^{-(d-1)t}\right)^{d-1} \left(\sqrt{\pi} - \frac{1}{\sqrt{\lambda_1 t}}e^{-\frac{\lambda_1 t}{4}}\right)},
	\end{aligned}
	\]
	where
	\begin{equation}\label{eq:def_h(t)}
		h(t):= \left(1-\left(1-e^{-(d-1)t}\right)^{-(d-1)}\right)<0.
	\end{equation}
	Since
	\[
	\lim\limits_{t\to\infty} \frac{h(t)}{-(d-1)e^{-(d-1)t}} = 1,
	\]
	it is left as an exercise for the reader to check that 
	\[
	h(t)\geq -2(d-1)e^{-(d-1)t}\quad\text{for all}\quad t\geq 1.
	\]
	Therefore, for $t\geq 1$,
	\[
	\frac{\partial_t C(t)}{C(t)}\geq -\frac{(d-1)^3t+2(d-1)}{2t} e^{-(d-1)t} - \frac{\left(\frac{2\sqrt{t}}{\sqrt{\lambda_1}}+2(d-1)t^{\frac{3}{2}}\right)e^{-\frac{\lambda_1 t}{4}} + (d-1)t^\frac{3}{2}e^{-\lambda_1 t} }{4t^2\left(1-e^{-(d-1)t}\right)^{d-1} \left(\sqrt{\pi} - \frac{1}{\sqrt{\lambda_1 t}}e^{-\frac{\lambda_1 t}{4}}\right)}.
	\]
	Thus, if we define 
	\begin{equation}\label{eq:def_m_d}
		m_d := \min \left(d-1, \frac{(d-1)^2}{16}\right),
	\end{equation}
	then there must exist a positive constant $k_d$ such that
	\[
	\frac{\partial_t C(t)}{C(t)}\geq -k_d\ e^{-m_d t} \quad\text{for all}\quad t\geq 1.
	\]
	
	Now, for the upper bound  for ~\eqref{eq:quotient_C(t)}. For the sake of brevity, we will omit computations similar to the ones in the previous step. We estimate again each integral separately. First, by similar arguments as before,
	\[
	\mathcal{I}\geq 2\sqrt{t}\left(1-e^{-(d-1)t}\right)^{d-1}\int_{-\frac{\sqrt{\lambda_1 t}}{2}}^\infty \left(4tz^2 +(d-1)^2t^2  +4(d-1)t^\frac{3}{2}  \right) e^{-z^2}\ \d z.
	\]
	We use now the following bound, left for the reader as an exercise. For every $a>0$,
	\[
	\int_{-a}^\infty e^{-z^2}\ \d z = \int_{-\infty}^\infty e^{-z^2}\ \d z - \int_{-\infty}^{-a} e^{-z^2}\ \d z\geq \sqrt{\pi} - \frac{1}{2a}e^{-a^2}.
	\]
	Using this, and that $\sqrt{\lambda_1}=(d-1)/2$ by definition, it is not hard to arrive to
	\[
	\mathcal{I}\geq 2\sqrt{t}\left(1-e^{-(d-1)t}  \right)^{d-1}\left[\sqrt{\pi}(d-1)^2t^2+2\sqrt{\pi}t + e^{-\frac{\lambda_1 t}{4}}\left(   \frac{d-1}{2}t^\frac{3}{2} - \frac{2}{\sqrt{\lambda_1}}\sqrt{t}  \right)  \right].
	\]
	For the integral $\mathcal{II}$ we simply compute
	\[
	\mathcal{II}\leq 2\sqrt{t}\int_{-\sqrt{\lambda_1 t}}^\infty e^{-z^2}\ \d z\leq 2\sqrt{t}\int_{-\infty}^\infty e^{-z^2}\ \d z = 2\sqrt{\pi t}.
	\]
	Substituting in~\eqref{eq:quotient_C(t)} we arrive, after some computations, to
	\[
	\frac{\partial_t C(t)}{C(t)}\leq \left( \frac{(d-1)^2}{4} + \frac{1}{2t} \right)h(t) + \left(1-e^{-(d-1)t}  \right)^{d-1} \left( \frac{d-1}{2}t^\frac{3}{2} - \frac{2}{\sqrt{\lambda_1}}\sqrt{t} \right) \frac{e^{-\frac{\lambda_1 t}{4}}}{4t^2\sqrt{\pi}},
	\]
	where $h(t)$ is defined as in~\eqref{eq:def_h(t)}. From here it follows that
	\[
	\frac{\partial_t C(t)}{C(t)}\leq \left( \frac{(d-1)^3}{2} + \frac{1}{t} \right)e^{-(d-1)t} + \left( \frac{d-1}{2}t^\frac{3}{2} - \frac{2}{\sqrt{\lambda_1}}\sqrt{t} \right) \frac{e^{-\frac{\lambda_1 t}{4}}}{4t^2\sqrt{\pi}},
	\]
	and from here, with $m_d$ defined as in~\eqref{eq:def_m_d},
	\[
	\frac{\partial_t C(t)}{C(t)}\leq k_d e^{-m_d t}\quad\text{for all}\quad t>1.
	\]
	Putting all together we obtain~\eqref{eq:bound_quotient_C(t)}.
	
	Along the previous steps, we have proven that
	\[
	2\sqrt{t} \left(1- e^{-(d-1)t}\right)^{d-1}\left[\sqrt{\pi} - \frac{1}{\sqrt{\lambda_1 t}}e^{-{\frac{\lambda_1 t}{4}}} \right]\leq  \mathcal{II}\leq  2\sqrt{\pi t}
	\]
	With this bounds and equation~\eqref{eq:equality_C_radial_solutions} it is easy to conclude~\eqref{eq:limit_C(t)}.
\end{proof}

\subsection{Explicit expression for the function $\varphi$}\label{sect:ExplicitVarPhi}
%\ag{We have to discuss this result. We proved it for $d=2, 3$, and verified it on paper for $d=5$. For $d>3$ our claim seems true, but for now, I don't know how to prove it, the actual proof is wrong. Maybe we can spend some time translating the result of Anker-Papageorgiou-Zhang (which is written in a super-general way with a lot of geometric terms) to ours, but we can also add a remark with our candidate for the formula saying that APZ proved the existence of such limit and computed it for the interested reader, but we didn't want to check if our formula and theirs is the same, since that is not the focus of the article. In other words, here is the limit in $d=2, 3$, and we believe we have also this formula for $d>3$, but if you don't believe us and need the formula, go to APZ.}
This section is devoted to the proof of the explicit expression for the function $\varphi$ defined in \eqref{eq:limitQuotient} in the cases $d\in\{2,3\}$. Recall that we denote $(r_y,\theta_y)$ for the polar coordinates centred in some origin $\mathcal{O} \in \mathbb{H}^d$ of some point $y \in \mathbb{H}^d$, and we write $\lambda (x,y) = \textup{dist}(x,y)$. The following result is a direct, although long computation.
\begin{prp}\label{propVarPhi}
	Fix $y\in\H^d$ and $l\in(0,1)$. Given any $\ell \in (-l  \sqrt{t}, l \sqrt{t})$, define $r_{\ell} (t) = (d-1)t+ \ell$ for all $t \in (0,+\infty)$. Then, for any $\theta \in \mathbb{S}^{d-1}$ we have that $p_t=p_t(r_\ell(t), \theta)$ denotes a unique point in $\H^d$ and	there exists a continuous function $\varphi (y,\theta)$ such that
	\begin{equation*}
		\displaystyle\lim_{t \to +\infty} \frac{P_d^y (t,p_t)}{ P_d (t,p_t)} = \varphi (y,\theta),
	\end{equation*}
	where, in the particular cases $d=2, 3$,
	\begin{equation*}
		\begin{aligned}
			\varphi (y,\theta) =\left[ \cosh (r_y)-\sinh (r_y) \cdot \langle\theta,\theta_y\rangle \right]^{-(d-1)}.
		\end{aligned}
	\end{equation*}
\end{prp}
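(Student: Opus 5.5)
The plan is to prove existence, continuity and the explicit formula together for every $d\geq 2$. I will use the sharp asymptotics
\[
G_d(t,r)\sim \gamma\!\left(\tfrac{r}{2t}\right)t^{-\frac32}\, r\, e^{-\frac{(r+(d-1)t)^2}{4t}}
\]
recalled in the introduction (from \cite{Anker-Ji-1999}), with $\gamma$ continuous and positive. The cases $d=2,3$ then follow as particular instances, and I will cross-check them against the explicit kernels.

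\textbf{Step 1 (expansion of $\lambda$).} Fix $y=(r_y,\theta_y)$ and write $x=p_t=(r_x,\theta)$ with $r_x=(d-1)t+\ell$ and $|\ell|<l\sqrt t$. Put $c=\langle\theta,\theta_y\rangle$ and
\[
A(y,\theta):=\cosh(r_y)-\sinh(r_y)\,c .
\]
Since $A\geq e^{-r_y}>0$, the quantity $A$ is a continuous positive function of $(y,\theta)$. From \eqref{eq:formula_lambda},
\[
\cosh\lambda=\tfrac{e^{r_x}}{2}A+O_{r_y}(e^{-r_x}).
\]
Because $\operatorname{arccosh}z=\ln(2z)+O(z^{-2})$, this gives
\[
\lambda(x,y)=r_x+\ln A+o(1)\quad\text{as } t\to\infty ,
\]
uniformly in $\ell$, $\theta$ and $r_y$ in compact sets. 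In particular $\lambda/r_x\to1$ and $\lambda/(2t)\to(d-1)/2$, and the same holds for $r_x/(2t)$.

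\textbf{Step 2 (the quotient).} Both $r_x/(2t)$ and $\lambda/(2t)$ lie eventually in a fixed compact subset of $(0,\infty)$ and converge to $(d-1)/2$. Continuity of $\gamma$ then gives $\gamma(\lambda/2t)/\gamma(r_x/2t)\to1$. Hence
\[
\frac{P^y_d(t,p_t)}{P_d(t,p_t)}=(1+o(1))\,\frac{\lambda}{r_x}\exp\!\Big(-\frac{\lambda^2-r_x^2}{4t}-\frac{(d-1)(\lambda-r_x)}{2}\Big).
\]
Writing $\lambda^2-r_x^2=(\lambda-r_x)(\lambda+r_x)$, we have
\[
\frac{\lambda^2-r_x^2}{4t}=(\ln A+o(1))\,\frac{2(d-1)t+O(\sqrt t)}{4t}\to\frac{d-1}{2}\ln A .
\]
The second term in the exponent tends to $\frac{d-1}{2}\ln A$ as well. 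Therefore
\[
\lim_{t\to\infty}\frac{P^y_d(t,p_t)}{P_d(t,p_t)}=A^{-(d-1)}=\big[\cosh r_y-\sinh r_y\langle\theta,\theta_y\rangle\big]^{-(d-1)}=:\varphi(y,\theta).
\]
This limit is independent of $\ell$ and $l$ and is continuous on $\H^d\times\S^{d-1}$. The point $p_t$ is unique because polar coordinates are a diffeomorphism off $\mathcal O$ and $r_\ell(t)>0$ for $t$ large.

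\textbf{Main obstacle.} The delicate point is that "$\sim$" in the cited asymptotics must hold \emph{uniformly} for $r/t$ in compact subsets of $(0,\infty)$, so that the two equivalences at the different radii $\lambda$ and $r_x$ can be divided. I would justify this from the uniform statement in \cite{Anker-Ji-1999} (their estimate is uniform in $r/t$ bounded away from $0$ and $\infty$), rather than rely on the bounds \eqref{eq:bound_heat_kernel_in_terms_of_h}, which are too weak here.

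\textbf{Independent check for $d=2,3$.} For $d=3$ I would use $G_3(t,r)=(4\pi t)^{-3/2}\frac{r}{\sinh r}e^{-t-r^2/4t}$ directly. The ratio is $\frac{\lambda\sinh r_x}{r_x\sinh\lambda}e^{-(\lambda^2-r_x^2)/4t}$, and $\sinh r_x/\sinh\lambda\to A^{-1}$ with the Gaussian factor giving another $A^{-1}$, so the limit is $A^{-2}$. For $d=2$ I would use the integral representation $G_2(t,r)=c\,t^{-3/2}e^{-t/4}\int_r^\infty \frac{s e^{-s^2/4t}}{\sqrt{\cosh s-\cosh r}}\,ds$. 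Substituting $s=r+\sigma$ and applying Laplace's method, the integral equals $(1+o(1))\,e^{-r/2}\sqrt2\, r e^{-r^2/4t}\int_0^\infty\frac{e^{-\sigma r/2t}}{\sqrt{e^\sigma-1}}\,d\sigma$. Taking the ratio at $\lambda$ and $r_x$ leaves $e^{-(\lambda-r_x)/2}e^{-(\lambda^2-r_x^2)/4t}\to A^{-1}$, in agreement with the general formula.
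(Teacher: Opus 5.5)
Your proof is correct, and its main line differs from the paper's. Your Step 1 is the same expansion $\lambda(p_t,y)=r_\ell(t)+\ln A+O(e^{-r_\ell(t)})$ that the paper derives. Your ``independent check'' is essentially the paper's entire proof:
\begin{itemize}
\item For $d=3$ the paper inserts this expansion into the explicit $G_3$ exactly as you do.
\item For $d=2$ the paper obtains the asymptotics of $G_2(t,r_\ell(t))$ from the integral representation by splitting the integral at $s=r_\ell+\sqrt t$. You use a dominated-convergence (Laplace-type) argument instead. Yours is slightly cleaner, because $\cosh(r+\sigma)-\cosh r=\tfrac{e^r}{2}(e^\sigma-1)(1-e^{-2r-\sigma})$ makes the approximation uniform in $\sigma$.
\end{itemize}
For $d\geq4$ the paper only asserts that the result follows from the recurrence \eqref{eq:recurrence_derivative_kernels}. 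That step would in fact require asymptotics of $\partial_rG_d$, not only of $G_d$. Existence and continuity of $\varphi$ in general dimension are really taken from \cite{APZ}.

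Your route instead feeds Step 1 into the sharp asymptotics $G_d(t,r)\sim\gamma(r/2t)\,t^{-3/2}\,r\,e^{-(r+(d-1)t)^2/4t}$. This yields existence, continuity and the explicit value $A^{-(d-1)}$ for every $d\geq2$ in one computation. It uses only the structure of that asymptotic and the continuity and positivity of $\gamma$ near $(d-1)/2$.

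The cost is a heavier external input. The equivalence must hold along both curves $r=r_\ell(t)$ and $r=\lambda(p_t,y)$, i.e.\ as a genuine joint limit in $(t,r)$ or uniformly for $r/t$ in compact subsets of $(0,\infty)$. You rightly single this out and attribute it to \cite{Anker-Ji-1999}. The paper's argument for $d=2,3$, by contrast, is self-contained given only the explicit kernels.
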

\begin{proof}

	Write $\lambda (p_t,y) = \textup{dist} (p_t, y)$. Then, by the second hyperbolic law of cosines we have that
	\begin{equation}\label{ry1}
		\lambda (p_t,y)  = \textup{arccosh} \left( \cosh (r_\ell(t)) \cosh (r_y)-\sinh (r_\ell(t)) \sinh (r_y) \cdot \langle\theta,\theta_y\rangle \right).
	\end{equation}
	Since the fundamental solution depends only on the distance to the point at its centre, $P^y_d(t,x)=G_d(t,\lambda)$, the proof of the result reduces to compute the limit
	\begin{equation}\label{hd}
		\displaystyle\lim_{t \to +\infty} \frac{G_d (t,\lambda (p_t,y))}{G_d (t,r_\ell (t))}.
	\end{equation}

	Let us denote the argument of the arccosh in \eqref{ry1} by
	\[
	\alpha:= \cosh (r_\ell(t)) \cosh (r_y)-\sinh (r_\ell(t)) \sinh (r_y) \cdot \langle\theta,\theta_y\rangle
	\]
	%    Since the point $y$ and the direction $\theta$ are fixed, $\alpha$ is only a function of $r_\ell(t)$.
	By using the Taylor expansion of $\textup{arccosh} (x)$, we conclude that 
	\[
	\lambda (p_t,y) = \ln (2\alpha )-\displaystyle\sum_{n=1}^{+\infty} \frac{(2n)!}{2^{2n} (n!)^2} \frac{1}{2n \alpha^{2n}}.
	\]
	Note that
	\[
	\alpha \geq \cosh (r_\ell(t)) \cosh (r_y)-\sinh (r_\ell(t)) \sinh (r_y) = \cosh(r_\ell(t)-r_y)\geq 1,
	\]
	so the above expansion is well defined. We can estimate both terms of this sum. For the first one, we have that
	\[
	\begin{split}
		\ln (2\alpha) &= \ln \left( (\cosh (r_y)-\sinh (r_y) \langle\theta,\theta_y\rangle) e^{r_\ell(t)} \right) + \ln \left( 1+ \frac{\cosh (r_y)+\sinh (r_y) \langle\theta,\theta_y\rangle}{\cosh (r_y)-\sinh (r_y) \langle\theta,\theta_y\rangle} e^{-2r_\ell(t)} \right) \\
		&= \ln \left( (\cosh (r_y)-\sinh (r_y) \langle\theta,\theta_y\rangle) e^{r_\ell(t)} \right) + O(e^{-2r_\ell(t)}).
	\end{split}
	\]
	For the second one, choose $t$ big enough such that $r_y<r_\ell(t)/2$, implying that
	\[
	\alpha\geq \cosh(r_\ell(t)-r_y)> \cosh(r_\ell(t)/2)\sim e^{\frac{r_\ell(t)}{2}}
	\]
	and therefore
	\[
	\displaystyle\sum_{n=1}^{+\infty}  \frac{(2n)!}{2^{2n} (n!)^2} \frac{1}{2n \alpha^{2n}} < \displaystyle\sum_{n=1}^{+\infty} \frac{1}{2n \alpha^{2n}} < \displaystyle\sum_{n=1}^{+\infty} \left(\frac{1}{\alpha^2}\right)^n = \frac{1}{\alpha^2-1} = O(e^{-r_\ell(t)}).
	\]
	Thus we conclude that for any $t$ big enough such that $r_y<r_\ell(t)/2$ we have 
	\begin{equation}\label{ry2}
		\begin{aligned}
			\lambda(p_t,y) &= \ln \left( (\cosh (r_y)-\sinh (r_y) \langle\theta,\theta_y\rangle) e^{r_\ell(t)} \right) + O(e^{-r_\ell(t)})\\[10pt]
			& =r_\ell(t) + \ln \left( (\cosh (r_y)-\sinh (r_y) \langle\theta,\theta_y\rangle)  \right) + O(e^{-r_\ell(t)}).
		\end{aligned}
	\end{equation}
	With this estimate for $\lambda$, we can compute the limit appearing in \eqref{hd}. We will do it only for the cases $d=2,3$. The other cases follows using \eqref{eq:recurrence_derivative_kernels}. 
	
	\noindent\textbf{Case $d=3$}.
	
	In this case, we have
	\[
	G_3 (t,r) = \frac{1}{(4 \pi t)^{\frac{3}{2}}} \frac{r}{\sinh (r)} e^{-t-\frac{r^2}{4 t}},
	\]
	and by \eqref{ry2} it is straightforward to show that
	\[
	\displaystyle\lim_{t \to +\infty} \frac{\lambda(p_t, y)}{r_\ell (t)} = 1,\quad \displaystyle\lim_{t \to +\infty} \frac{\sinh (r_\ell (t))}{\sinh (\lambda(p_t, y))} = [\cosh (r_y)-\sinh (r_y) \langle\theta,\theta_y\rangle]^{-1}
	\]
	and, using that in dimension $d=3$ we have $r_\ell(t)= 2t+\ell$, that
	\[
	\displaystyle\lim_{t \to +\infty} \frac{e^{\frac{r_\ell (t)^2}{4t}}}{e^{\frac{\lambda(p_t,y)^2}{4 t}}} = [\cosh (r_y)-\sinh (r_y) \langle\theta,\theta_y\rangle]^{-1},
	\]
	so it follows that
	\[
	\displaystyle\lim_{t \to +\infty} \frac{G_3 (t,\lambda(p_t, y))}{G_3 (t,r_\ell (t))} = (\cosh (r_y)-\sinh (r_y) \langle\theta,\theta_y\rangle)^{-2}.
	\]
	
	\noindent\textbf{Case $d=2$.}
	
	Now we have 
	\[
	G_2 (t,r) = \frac{\sqrt{2}}{(4 \pi t)^{\frac{3}{2}}}e^{-\frac{1}{4}t} \int_{r}^{\infty} \frac{s e^{-\frac{s^2}{4 t}}}{\sqrt{\cosh (s) -\cosh (r)}} \, ds,
	\]
	so we cannot compute the limit directly. Instead, we check first the asymptotics of $G_2 (t, r_\ell (t))$. It is enough to compute the asymptotics of the integral
	\[
	I(t) = \int_{r_\ell}^{\infty} \frac{s e^{-\frac{s^2}{4 t}}}{\sqrt{\cosh (s) -\cosh (r_\ell)}} \, ds =  \int_{0}^{\infty} \frac{(z+r_\ell) e^{-\frac{(z+r_\ell)^2}{4 t}}}{\sqrt{\cosh (z+r_\ell) -\cosh (r_\ell)}} \, dz,
	\] 
	where we did the change of variables $z=s-r$. We divide this integral in two. First we compute
	\[
	J(t) = \int_{0}^{\sqrt{t}} \frac{(z+r_\ell) e^{-\frac{(z+r_\ell)^2}{4 t}}}{\sqrt{\cosh (z+r_\ell) -\cosh (r_\ell)}} \, dz = \frac{1}{\sqrt{\sinh (r_\ell)}}\int_{0}^{\sqrt{t}} \frac{(z+r_\ell) e^{-\frac{(z+r_\ell)^2}{4 t}}}{\sqrt{z \cdot f(t,z)}} \, dz, 
	\]
	with $$f(t,z) = \frac{1}{z} \left(\frac{\cosh (z+r_\ell (t))}{ \sinh (r_\ell (t))}- \coth (r_\ell (t)) \right)= \frac{1}{z} (e^z-1 + o(t)),$$
	where we write $o(t)$ for a function such that $o(t) \to 0$ as $t \to \infty$. Now, we note that
	\[
	\begin{split}
		\int_{0}^{\sqrt{t}} r_\ell \, z^{-1/2} e^{-\frac{(z+r_\ell)^2}{4 t}} \, dz &= r_\ell e^{-\frac{r_\ell^2}{4 t}} \left(\frac{2t}{r_\ell}\right)^{1/2} \int_0^V e^{-v} e^{-\frac{tv^2}{r_\ell^2}} v^{-1/2} \, dv \\
		& = e^{-1/2} \sqrt{2 \pi r_\ell t} e^{-\frac{r_\ell^2}{4t}} (1+ o (t)),
	\end{split}
	\]
	To check this identity, note that $r_\ell (t) =t+l$ and we have done the change of variables $v=(z r_\ell) / (2t)$. On the other hand, we have that
	\[
	\displaystyle\lim_{t \to \infty} \int_0^{\sqrt{t}}\frac{ \sqrt{z}}{f(t,z)} e^{-\frac{z^2}{4 t}-\frac{r_\ell z}{2 t}} \, dz = \int_{0}^{+\infty} \frac{e^{-z} z}{\sqrt{e^z-1}} \, dz <+\infty,
	\]
	so we conclude that
	\[
	J(t) = C_1 \sqrt{2\pi r_\ell t} e^{-\frac{r_\ell^2}{4 t}} (1+o(t)),
	\]
	for some $C_1>0$. 
	
	Now we deal with the integral
	\[
	E(t) = \int_{\sqrt{t}}^{+\infty} \frac{(z+r_\ell) e^{-\frac{(z+r_\ell)^2}{4 t}}}{\sqrt{\cosh (z+r_\ell) -\cosh (r_\ell)}} \, dz.
	\]
	In this case, we note that
	\[
	\cosh (z+r_\ell)-\cosh (r_\ell) \geq \cosh (\sqrt{t}+r_\ell)-\cosh (r_\ell) = e^{r_\ell+\sqrt{t}}+ o(t),
	\]
	so we get that
	\[
	E(t) \leq \frac{1}{e^{r_\ell+\sqrt{t}}+o(t)} \int_{\sqrt{t}}^{\infty} (z+r_\ell) e^{-\frac{(z+r_\ell)^2}{4 t}}\, dz = \frac{C_2 t e^{-\frac{r_\ell^2}{4t}-\frac{r_\ell}{2 t}}}{e^{r_\ell+\sqrt{t}}+o(t)},
	\]
	and this term in negligible with respect to $J(t)$. Thus, we conclude that
	\[
	G_2 (t, r_\ell (t)) = C_3\left(\frac{\sqrt{r_\ell (t)} e^{-\frac{r_\ell(t)^2}{4t}-\frac{t}{4}}}{ \sqrt{t}\sqrt{\sinh (r_\ell (t))}}+ o(t)\right),
	\]
	with $C_3>0$. 
	
	Now it is straightforward that
	\[
	\displaystyle\lim_{t \to +\infty} \frac{G_2 (t,\lambda(p_t, y))}{G_2 (t,r_\ell (t))} = (\cosh (r_y)-\sinh (r_y) \langle\theta,\theta_y\rangle)^{-1}.
	\]

\end{proof}

\subsection*{Acknowledgments, COI, Data availability}

J.~Cañizo and A.~Gárriz were supported by grants PID2023-151625NB-I00, RED2024-153842-T, and the IMAG María de Maeztu
grant CEX2020-001105-M, all of them funded by MICIU/AEI/10.13039/501100011033.

A.~Gárriz was also supported by the Juan de la Cierva grant
JDC2022-048653-I, funded by MICIU/AEI/10.13039/501100011033 and by the Ramón y Cajal grant RYC2024-050952-I, funded by MICIU/AEI/10.13039/501100011033 and by the FSE+.

D.A. Marín was supported by the {\it Maria de Maeztu} Excellence Unit IMAG, reference CEX2020-001105-M, funded by MCINN/AEI/10.13039/ 501100011033/CEX2020-001105-M, and Spanish MIC Grant PID2023-150727NB.I00.

\medskip

\textbf{Conflict of Interests: } The authors declare that they have no conflict of interests of any kind.

\medskip

\textbf{Data availability statement: } The authors declare that there is no external associated data to be considered for this article.

\bibliography{document}

\end{document}